\documentclass[11pt]{article}
\pdfoutput=1 
\usepackage[utf8]{inputenc}

\usepackage[margin=1in]{geometry}
\usepackage{amsfonts}
\usepackage{amsmath}
\usepackage{amssymb}
\usepackage{amsthm}
\usepackage{float}
\usepackage[font=small,labelfont=bf]{caption} 
\usepackage{subfigure}
\usepackage[ruled,vlined]{algorithm2e}

\allowdisplaybreaks

\usepackage{xspace}
\usepackage{mathtools}
\usepackage{mleftright}
\usepackage{xparse}

\usepackage{bold-extra} 

\usepackage{hyperref}
\usepackage[svgnames]{xcolor}
\hypersetup{colorlinks={true},urlcolor={blue},linkcolor={DarkBlue},citecolor=[named]{DarkGreen}}
\usepackage[authoryear,square]{natbib}
\usepackage{doi}

\usepackage{tcolorbox}
\tcbuselibrary{breakable}
\tcbuselibrary{skins}

\usepackage{microtype}
\usepackage[capitalise,nameinlink,noabbrev]{cleveref}

\usepackage{tikz}
\usetikzlibrary{arrows.meta}

\usepackage{graphicx}

\usepackage{enumitem}

\definecolor{blueContr}{RGB}{116,144,202}
\definecolor{redContr}{RGB}{234,92,72}
\definecolor{redDark}{RGB}{139,40,14}
\definecolor{blueDark}{RGB}{41,61,104}



\newcommand{\vecx}{\ensuremath{\boldsymbol{x}}}
\newcommand{\vecy}{\ensuremath{\boldsymbol{y}}}


\newcommand{\N}{\ensuremath{\mathbb{N}}}

\newcommand{\R}{\ensuremath{\mathbb{R}}}


\newcommand{\calA}{\ensuremath{\mathcal{A}}}
\newcommand{\calB}{\ensuremath{\mathcal{B}}}
\newcommand{\calC}{\ensuremath{\mathcal{C}}}
\newcommand{\calD}{\ensuremath{\mathcal{D}}}

\newcommand{\calI}{\ensuremath{\mathcal{I}}}

\newcommand{\calL}{\ensuremath{\mathcal{L}}}

\newcommand{\calQ}{\ensuremath{\mathcal{Q}}}

\newcommand{\calT}{\ensuremath{\mathcal{T}}}

\newcommand{\calV}{\ensuremath{\mathcal{V}}}



\NewDocumentCommand\xDeclarePairedDelimiter{mmm}
 {%
  \NewDocumentCommand#1{som}{%
   \IfNoValueTF{##2}
    {\IfBooleanTF{##1}{#2##3#3}{\mleft#2##3\mright#3}}
    {\mathopen{##2#2}##3\mathclose{##2#3}}%
  }%
 }

\xDeclarePairedDelimiter\inner{\langle}{\rangle}
\xDeclarePairedDelimiter\ang{\langle}{\rangle}
\xDeclarePairedDelimiter\abs{\lvert}{\rvert}
\xDeclarePairedDelimiter\set{\{}{\}}
\xDeclarePairedDelimiter\p{(}{)}

\let\b=\relax
\xDeclarePairedDelimiter\b{[}{]}
\xDeclarePairedDelimiter\round{\lfloor}{\rceil}
\xDeclarePairedDelimiter\floor{\lfloor}{\rfloor}
\xDeclarePairedDelimiter\ceil{\lceil}{\rceil}
\xDeclarePairedDelimiter\norm{\lVert}{\rVert}






\newcommand{\class}[1]{\ensuremath{\mathsf{#1}}\xspace}

\def\poly{\mathrm{poly}}


\newcommand{\FP}{\class{FP}}

\newcommand{\NP}{\class{NP}}
\newcommand{\FNP}{\class{FNP}}
\newcommand{\TFNP}{\class{TFNP}}

\newcommand{\PLS}{\class{PLS}}
\newcommand{\PPAD}{\class{PPAD}}

\newcommand{\CLS}{\class{CLS}}

\newcommand{\eps}{\varepsilon}

\newcommand{\dist}{\textup{dist}}
\newcommand{\len}{\mathrm{len}}

\DeclareMathOperator*{\argmax}{argmax}
\DeclareMathOperator*{\argmin}{argmin}

\def\eol/{\textup{\textsc{End-of-Line}}}
\def\clo/{\textup{\textsc{Continuous-Localopt}}}
\def\gclo/{\textup{\textsc{General-Continuous-Localopt}}}
\def\kkt/{\textup{\textsc{KKT}}}
\def\gdls/{\textup{\textsc{GD-Local-Search}}}
\def\gdfp/{\textup{\textsc{GD-Fixpoint}}}
\def\gbrouwer/{\textup{\textsc{General-Brouwer}}}
\def\grlo/{\textup{\textsc{General-Real-Localopt}}}
\def\linclo/{\textup{\textsc{Linear-Continuous-Localopt}}}
\def\iter/{\textup{\textsc{Iter}}}
\def\stationary/{\textup{\textsc{Stationary}}}


\theoremstyle{definition}
\newtheorem{definition}{Definition}[section]
\newtheorem{remark}{Remark}

\theoremstyle{plain}
\newtheorem{theorem}{Theorem}[section]

\newtheorem{lemma}{Lemma}
\newtheorem{corollary}{Corollary}
\newtheorem{claim}{Claim}

\title{The Computational Complexity of Finding Stationary Points in Non-Convex Optimization\thanks{An extended abstract appeared in the proceedings of COLT 2023.}}

\author{
\begin{tabular}{c c c}
& \hspace{1.7cm} & \\ \textbf{Alexandros Hollender} & & \textbf{Manolis Zampetakis}\\
\small{University of Oxford} & & \small{Yale University}\\
\href{mailto:alexandros.hollender@cs.ox.ac.uk}{\small{\texttt{alexandros.hollender@cs.ox.ac.uk}}} & & \href{mailto:emmanouil.zampetakis@yale.edu}{\small{\texttt{emmanouil.zampetakis@yale.edu}}}
\end{tabular}
}

\date{}

\begin{document}

\maketitle

\begin{abstract}
Finding approximate stationary points, i.e., points where the gradient is approximately zero, of non-convex but smooth objective functions $f$ over unrestricted $d$-dimensional domains is one of the most fundamental problems in classical non-convex optimization. Nevertheless, the computational and query complexity of this problem are still not well understood when the dimension $d$ of the problem is independent of the approximation error. In this paper, we show the following computational and query complexity results:
  \begin{enumerate}
    \item The problem of finding approximate stationary points over unrestricted domains is $\PLS$-complete.
    \item For $d = 2$, we provide a zero-order algorithm for finding $\eps$-approximate stationary points that requires at most $O(1/\eps)$ value queries to the objective function. 
    \item We show that any algorithm needs at least $\Omega(1/\eps)$ queries to the objective function and/or its gradient to find $\eps$-approximate stationary points when $d=2$. Combined with the above, this characterizes the query complexity of this problem to be $\Theta(1/\eps)$.
    \item For $d = 2$, we provide a zero-order algorithm for finding $\eps$-KKT points in constrained optimization problems that requires at most $O(1/\sqrt{\eps})$ value queries to the objective function. This closes the gap between the works of \cite{BubeckM20-trap-gradient} and \cite{Vavasis93-local-min} and characterizes the query complexity of this problem to be $\Theta(1/\sqrt{\eps})$.
    \item Combining our results with the recent result of \cite{FearnleyGHS22-gradient}, we show that finding approximate KKT points in constrained optimization is reducible to finding approximate stationary points in unconstrained optimization but the converse is impossible.
  \end{enumerate}
\end{abstract}

\section{Introduction}

One of the most fundamental problems in optimization is the following unconstrained problem which arises in the whole spectrum of scientific research and engineering and is an essential part of modern machine learning systems (\cite{boyd2004convex, nesterov2003introductory})
\begin{equation} \label{eq:problemDefinition}
  \min_{x \in \R^d} f(x).
\end{equation}
Depending on the structure of $f$, the above optimization problem takes different forms. In this paper, we focus on the case where $f$ is possibly non-convex, it has Lipschitz gradients, i.e., is \textit{smooth}, and its value is lower bounded. This formulation of \eqref{eq:problemDefinition} corresponds to one of the standard formulations of optimization problems in non-convex optimization. The global optimization of \eqref{eq:problemDefinition} is intractable even if we allow approximation errors (\cite{nemirovskij1983problem}). Following the classical paradigm, we consider the problem of finding \textit{approximate stationary points (or $\eps$-stationary points)} of \eqref{eq:problemDefinition}, i.e., we want to find $x^{\star} \in \R^d$ such that
\begin{equation} \label{eq:stationaryDefinition:intro}
  \norm{\nabla f\p{x^{\star}}}_2 \le \eps \quad \quad \text{ where $\quad \eps > 0$.}
\end{equation}

For this problem, it is well-known that gradient descent with appropriate but constant step size converges to an $\eps$-stationary point $x^{\star}$ after $O(1/\eps^2)$ iterations, if we assume that $f$ has constant smoothness, e.g., see \cite{nesterov2003introductory}.

Despite the significance of this problem, essentially no lower bounds were known until recently when \cite{CarmonDHS20-stationary} showed that if $d > 1/\eps^2$ then any algorithm that only has query access to $f$ and $\nabla f$ needs time $\Omega(1/\eps^2)$ to find an $\eps$-stationary point. This breakthrough result settles the query complexity of finding approximate stationary points when the number of dimensions grows faster that $1/\eps^2$.
\medskip

In this paper our goal is to understand the complexity of finding stationary points when the condition $d \ge 1/\eps^2$ does not hold. In particular, we want to answer the following two questions;

\begin{description}
  \item[Question 1:] \textit{What is the computational complexity of finding $\eps$-stationary points for any $d > 1$?}
  \item[Question 2:] \textit{What is the query complexity of finding approximate stationary points when $d = 2$?}
\end{description}

We note that for $d=1$, the problem can be solved using only $O(\log(1/\eps))$ function value queries \citep{chewi23a}. There has also been a lot of interest in the constrained version of the problem \eqref{eq:problemDefinition}, which corresponds to the following constrained optimization task
\begin{equation} \label{eq:problemDefinition:constrained}
  \min_{x \in [0, 1]^d} f(x).
\end{equation}
For this problem, the corresponding notion of approximate stationary points is finding \textit{approximate KKT points}. From the definition of stationary points and KKT points, it is not clear if finding stationary points in unconstrained optimization or finding KKT points in constrained optimization is harder, since the solution concepts are different. For this constrained problem we know the following:
\begin{itemize}
  \item \cite{FearnleyGHS22-gradient}: finding approximate KKT points is $\CLS$-complete.
  \item \cite{Vavasis93-local-min}: finding $\eps$-approximate KKT points in $d = 2$ requires at least $\Omega(1/\sqrt{\eps})$ value and gradient queries.
  \item \cite{BubeckM20-trap-gradient}: there exist an algorithm for finding $\eps$-approximate KKT points in $d = 2$ that requires at most $O((1/\sqrt{\eps})\sqrt{\log(1/\eps)})$ value and gradient queries.
\end{itemize}
Although the above results draw an almost complete picture for answering Questions 1 and 2 in the constrained optimization setting the following two questions still remain open.
\begin{description}
  \item[Question 3:] \textit{Is there an algorithm with complexity $O(1/\sqrt{\eps})$ for finding $\eps$-approximate KKT points in constrained optimization for $d = 2$?}
  \item[Question 4:] \textit{What is the relationship between finding approximate KKT points in constrained optimization and finding approximate stationary points in unconstrained optimization?}
\end{description}

In this paper we provide a complete answer for Questions 1 and 2 in unconstrained optimization and Questions 3 and 4 for constrained optimization.

\subsection{Our Results} \label{sec:results}

Our two main results are the following:
\begin{enumerate}
  \item \textbf{$\PLS$-completeness (Theorem \ref{thm:completeness}).} We show that finding $\eps$-stationary points of smooth and bounded functions in unconstrained optimization is $\PLS$-complete for any $d > 1$.
  
  \item \textbf{Unconstrained algorithm for $d = 2$ (Theorem \ref{thm:algo-unconstrained}).} For $d = 2$ we provide a zero-order algorithm that finds $\eps$-stationary points of smooth and bounded functions in unconstrained optimization using at most $O(1/\eps)$ value queries to the objective. This result resolves a recent open problem of \cite{chewi23a}.
\end{enumerate}

Once we establish these results we show that our $\PLS$-completeness result has the following important corollaries.
\begin{enumerate}
  \item[(a)] \textbf{Query lower bound for $d = 2$. (Theorem \ref{thm:query2D}).} We show that any algorithm for finding $\eps$-stationary points of smooth and bounded functions in unconstrained optimization when $d = 2$ needs at least $\Omega(1/\eps)$ value and/or gradient queries.
  
  \item[(b)] \textbf{Constrained vs Unconstrained optimization (Corollary \ref{cor:unconstrainedVSconstrained}).} It is possible to reduce finding approximate KKT points in constrained optimization to finding approximate stationary points in unconstrained optimization but the converse is impossible.
\end{enumerate}

  Additionally, the techniques that we use to show our second main result have the following corollary.
\begin{enumerate}
  \item[(c)] \textbf{Constrained algorithm for $d = 2$ (Theorem \ref{thm:algo-constrained}).} For $d = 2$ we provide a zero-order algorithm that finds $\eps$-KKT points of smooth and bounded functions in constrained optimization using at most $O(1/\sqrt{\eps})$ value queries to the objective. This result closes the gap between the upper bound of \cite{BubeckM20-trap-gradient} and the lower bound of \cite{Vavasis93-local-min}.
\end{enumerate}

The above results resolve our Questions 1 - 4. In particular: result 1. resolves Question 1, the combination of 2. and (a) resolve Question 2, (b) resolves Question 4, and (c) closes the gap of Question 3.

A particularly surprising result, at least for the authors, is that (b) provides the relationship between finding stationary points in unconstrained optimization and finding KKT points in constrained optimization. It is almost a reflex to believe that constrained optimization is harder than unconstrained optimization but the notion of stationary points in unconstrained optimization is provably harder than the notion of KKT points in constrained optimization. We can see this in two ways. First, finding approximate stationary points is $\PLS$-complete as per Theorem \ref{thm:completeness}, whereas finding approximate KKT points is $\CLS$-complete as per \cite{FearnleyGHS22-gradient} and $\CLS \subseteq \PLS$. Second, finding approximate KKT points is reducible to finding approximate stationary points in a black-box way, but the converse is in fact impossible. 

We believe that our results significantly increase our understanding of the complexity of finding stationary points in unconstrained optimization. Furthermore, we depart from the classical methods of showing lower bounds in optimization that often use the resisting oracle technique and we give one more example where tools from complexity theory can be utilized to provide new lower bounds in classical optimization problems, e.g, \cite{BabichenkoR21-congestion, DaskalakisSZ21-min-max, FearnleyGHS22-gradient}.

\subsection{Open Questions}

The following couple of open questions arise from our work:
\begin{enumerate}
  \item Characterize the query complexity of the problem (both in its unconstrained and constrained versions) in fixed dimension $d \geq 3$. In particular, any progress in dimension $d = 3$ is interesting.
  \item Characterize the query and computational complexity of higher order methods, both in the unconstrained and in the constrained setting.
\end{enumerate}

\section{Preliminaries} \label{sec:model}

\noindent \textbf{Notation.} We define $[[n, m]]$ for $n \le m$, to be the set of all integers between $n$ and $m$. We use $e_i$ to represent the $i$th unit vector in the standard basis of $\R^d$. We use $\ell(x, y)$ to denote the line segment between two points $x, y \in \R^d$. For any number  $z \in \R$ we use $\len(z)$ to denote the number of bits in the binary representation of $z$. We can extend the domain of $\len$ to vectors as follows: let $x \in \R^d$, then $\len(x) = \sum_{i = 1}^d \len(x_i)$.
\medskip

\noindent Our main object of study is a real-valued, non-convex, but smooth function $f : \R^d \to \R$. We assume that we have access to $f$ in two different ways:
\begin{itemize}[leftmargin=15pt]
  \item \textbf{Black Box Model.} This is the classical model used in optimization theory where we assume that for every point $x \in \R^d$ in the space there is a way to evaluate $f(x)$ and/or $\nabla f(x)$. If we only have access to $f(x)$ then we say that we have \textit{zero-order black box access} to $f$, whereas if we have access to both $f(x)$ and $\nabla f(x)$ we say that we have \textit{first-order black box access} to $f$.
  \item \textbf{White Box Model.} This is the classical model used in complexity theory to characterize the computational complexity of optimization problems. In this model we are given the description of a polynomial-time Turing machine $\calC_f$ that computes $f(x)$ and $\nabla f(x)$. More precisely, given some input $x \in \R^d$, described using $b$ bits, $\calC_f$ runs in time upper bounded by some polynomial in $b$ and outputs approximate values for $f(x)$ and $\nabla f(x)$. We note that a running time upper bound on a given Turing machine can be enforced syntactically by stopping the computation and outputting a fixed output whenever the computation exceeds the bound. See also Section 2 of \cite{DaskalakisSZ21-min-max} for more details about how to formally study the computational complexity of problems that take as input a polynomial-time Turing machine.
\end{itemize}

\noindent \textbf{Lipschitzness, Smoothness, and Normalization.} Our main objects of study are continuously differentiable Lipschitz, smooth, and bounded functions $f : \R^d \to \R$. A continuously differentiable function $f$ is called \textit{$L$-Lipschitz} if $\abs{f(x) - f(y)} \le L \norm{x - y}_2$, for all $x, y \in \R^d$, \textit{$L$-smooth} if $\norm{\nabla f(x) - \nabla f(y)}_2 \le L \norm{x - y}_2$, for all $x, y \in \R^d$, and \textit{$B$-bounded} is $\abs{f(x)} \le B$ for all $x \in \R^d$. In the black box model we will assume that the function satisfies these properties. In the white box model we will allow for violation solutions to handle cases where the properties are not satisfied.
\smallskip

\noindent \textbf{Approximate Stationary points.} As in classical unconstrained non-convex optimization, given the function $f : \R^d \to \R$ our goal is to find a point $x^{\star}$ such that $\norm{\nabla f(x^{\star})}_2 \le \eps$ for some given parameter $\eps > 0$. We call such a point $x^{\star}$ an \textit{$\eps$-stationary point of $f$}.
\medskip

In this paper we have two tasks: (1) to characterize the computational complexity of finding an $\eps$-stationary point when $f$ is given in the white box model, and (2) find tight upper and lower bounds on the number of oracle calls to $f$ that are needed when $d = 2$ and $f$ is given in the black box model. For each one of these results we need some definitions.

\subsection{Complexity of Total Search Problems and the Classes PLS and CLS} \label{sec:model:classes}

An important property of finding stationary points of a function $f$ is that if $f$ is bounded and smooth then there always exists an $\eps$-stationary point of $f$ with bit representation $\poly \log(B, L, 1/\eps)$, as we will see in Section \ref{sec:inclusion}. From this property, it is clear that the correct complexity landscape to study this problem is that of total search problems that is captured by the subclasses of the class $\TFNP$ which we define in Appendix \ref{app:tfnp}. In particular, we are interested in the complexity classes $\PLS$ as defined in \cite{JohnsonPY88-PLS, DaskalakisP11-CLS}. $\PLS$, which stands for Polynomial Local Search, is defined as the set of all $\TFNP$ problems that reduce in polynomial time to the following problem.

\begin{tcolorbox}
	\begin{definition}
		\textsc{Localopt}:
		
		\noindent\textbf{Input}: Circuits $N,P : [2^n] \to [2^n]$ ($N$ is the neighbor function and $P$ the potential function).
		
		\noindent\textbf{Goal}: Find $v \in [2^n]$ such that $P(N(v)) \geq P(v)$.
	\end{definition}
\end{tcolorbox}

\noindent In this paper, we make use of the following total search problem that is known to also characterize the complexity class $\PLS$ \citep{Morioka01-Mthesis-PLS}. See also Appendix \ref{app:complete:stationary} for more details.
\smallskip

\begin{tcolorbox}
	\begin{definition}\label{def:iter}
		\iter/:
		
		\noindent\textbf{Input}: Boolean circuit $C : [2^n] \to [2^n]$ with $C(1) > 1$.
		
		\noindent\textbf{Goal}: Find $v$ such that either
		\begin{itemize}
			\item $C(v) < v$, or
			\item $C(v) > v$ and $C(C(v)) = C(v)$.
		\end{itemize}
	\end{definition}
\end{tcolorbox}

\noindent Next we define the computational problem that we are interested in for the white box model.
\medskip

\begin{tcolorbox}[breakable,enhanced]
	\begin{definition}\label{def:kkt-general-inf}
		\stationary/:
		
		\noindent\textbf{Input}:
		\begin{itemize}
			\item precision parameter $\varepsilon > 0$,
			\item Turing machines $\calC_f$ and $\calC_{\nabla f}$ representing $f: \mathbb{R}^d \to \mathbb{R}$ and $\nabla f: \mathbb{R}^d \to \mathbb{R}^d$,
			\item a boundedness constant $B > 0$, and a smoothness constant $L > 0$.
		\end{itemize}
		
		\noindent\textbf{Goal}: Find $x^{\star} \in \R^d$ such that $\norm{\nabla f(x^{\star})}_2 \le \eps$.
        \smallskip
		
		\noindent Alternatively, we also accept one of the following violations as a solution:
		\begin{itemize}
			\item $x,y \in \R^d$ that violate the Lipschitzness or smoothness of $f$,
            \item $x\in \R^d$ such that $|f(x)| > B,$
			\item $x,y \in \R^d$ that certify that $\calC_{\nabla f}$ computes $\nabla f$ incorrectly.
		\end{itemize}
	\end{definition}
\end{tcolorbox}

\noindent \textbf{\bf Violations and promise-preserving reductions.} There is no known way of syntactically enforcing the Turing machines $\calC_f$, $\calC_{\nabla f}$ to be Lipschitz-continuous and bounded. Thus, to ensure that our problem indeed lies in $\TFNP$, we also allow solutions witnessing violations of these properties. This ``trick'' was also used by \cite{DaskalakisP11-CLS} for the definition of \CLS. Nevertheless, we note that our hardness result for \stationary/ also applies to the promise version of the problem, where we are promised that the input satisfies the properties.

\medskip

\subsection{Query Complexity in the Black Box Model} \label{sec:model:blackBox}

In the black box model we are only interested in the query complexity of the problem. This means that we are interested in understanding how many queries to $f$ and $\nabla f$ an algorithm has to make before it finds an $\eps$-stationary point. Fix a deterministic algorithm $\calA$, an initialization $x_0 \in \R^d$, and a bounded, Lipschitz and smooth function $f$. We define $\calT(\calA, x_0; f)$ to be the number of queries that the deterministic algorithm $\calA$ has to make to $f$ and $\nabla f$ before it outputs an $\eps$-stationary point of $f$.

\begin{remark}[Query lower bound for randomized algorithms.] In this paper we focus for simplicity on proving query lower bounds for deterministic algorithms. Nevertheless, we want to mention that the techniques of Section 6 of \cite{BubeckM20-trap-gradient} can be used to show that our Theorem \ref{thm:query2D} holds even for randomized algorithms with a polylogarithmic loss in the query complexity.
\end{remark}

\section{PLS-Completeness of Finding Stationary Points in Unconstrained Optimization} \label{sec:PLS-completeness}

In this section we characterize the computational complexity of the optimization problem \stationary/. The machinery that we use to prove Theorem \ref{thm:completeness} can be also utilized to show the tight query lower bound for \stationary/ in 2-dimensions, see Theorem \ref{thm:query2D}.

\begin{theorem} \label{thm:completeness}
  The problem \stationary/ is $\PLS$-complete, for any $d\ge2$. The $\PLS$-hardness of \stationary/ holds even if we are promised that the input satisfies the boundedness, the Lipschitzness, and the smoothness properties. Finally, our hardness reduction is black-box preserving.
\end{theorem}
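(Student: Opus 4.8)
The plan is to prove the two directions of the completeness claim separately, and then to observe that the hardness construction is robust enough to give the promise and black-box-preserving strengthenings. For membership, $\stationary/\in\PLS$, I would give a polynomial-time reduction to \textsc{Localopt} that encodes gradient descent as a local-search procedure. Fix step size $1/L$; by the descent inequality for $L$-smooth functions, $f(x-\tfrac1L\nabla f(x))\le f(x)-\tfrac1{2L}\norm{\nabla f(x)}_2^2$, so iterating from the origin strictly decreases $f$ by at least $\eps^2/(2L)$ per step until an $\eps$-stationary point is reached; since $f$ is $B$-bounded this happens within $T=O(LB/\eps^2)$ steps, and since $f$ is $L$-Lipschitz (hence $\norm{\nabla f}_2\le L$) every iterate stays in the box $[-T,T]^d$. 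Discretising that box to a grid of spacing $\delta=\poly(\eps/L)$ yields a set of $2^m$ points with $m=\poly(\len(L),\len(B),\len(1/\eps))$; the potential circuit $P$ returns a scaled, rounded copy of $f$, and the neighbour circuit $N$ maps a grid point $v$ to the grid-rounded gradient step from $v$ when that step decreases $f$ by more than the discretisation margin, and to $v$ itself otherwise. A solution $P(N(v))\ge P(v)$ is then either an $\eps$-stationary point or, failing that, exposes (by bisection on the segment $[v,N(v)]$) a pair witnessing the failure of the descent inequality, or a point violating the bound $B$; in all cases one outputs a valid solution or violation of Definition~\ref{def:kkt-general-inf}. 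This is the argument of Section~\ref{sec:inclusion}.

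For $\PLS$-hardness I would reduce from \iter/, which is $\PLS$-complete. Given a circuit $C:[2^n]\to[2^n]$ with $C(1)>1$, consider its line $1\to C(1)\to C(C(1))\to\cdots$, which is strictly increasing in its argument until it arrives at an \iter/ solution. The goal is to construct, via polynomial-time Turing machines $\calC_f,\calC_{\nabla f}$, a $C^1$ function $f:\R^2\to\R$ --- padded for $d>2$ by a bounded potential in the extra coordinates --- that realises this line as an embedded descending path $\Gamma$ inside a large box: $f$ decreases monotonically along $\Gamma$ by a fixed amount $\approx 2^{-cn}$ per link, is flat along $\Gamma$ only near its two endpoints, is ``V-shaped'' in the direction transverse to $\Gamma$ with transverse slope bounded below by $\eps:=2^{-c'n}$ everywhere else, and simply climbs with bounded slope away from $\Gamma$. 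One then checks that $\norm{\nabla f}_2>\eps$ off a small neighbourhood of the endpoints of $\Gamma$, so that any $\eps$-stationary point lies near an endpoint and hence encodes an \iter/ solution, recoverable with $O(1)$ further evaluations; since each evaluation of $\calC_f$ needs only $O(1)$ calls to $C$ (to identify the piece of $\Gamma$ near the queried point before evaluating a fixed smooth bump-and-interpolation profile), this is a black-box-preserving reduction. Because $C$ is always a syntactically valid \iter/ instance, the resulting $f$ genuinely meets the claimed $B$, $L$ and Lipschitz bounds, so no violation solutions exist and the hardness applies verbatim to the promise version.

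The main obstacle is the geometric heart of the hardness direction: realising the implicitly defined, exponentially long line $\Gamma$ as a non-self-intersecting planar path whose local structure near any point is decidable from $O(1)$ calls to $C$ --- a quantitatively careful adaptation of the line-embedding technique underlying earlier continuous-local-search hardness results --- while simultaneously keeping $f$ globally $C^1$ with a uniformly bounded Lipschitz gradient, keeping $f$ bounded (which is exactly what forces the per-link descent, and hence $\eps$, to be exponentially small, a scaling that must be tracked consistently through the bump constructions), and, most delicately, ruling out spurious stationary points at the junctions where consecutive links of $\Gamma$ meet and turn. The same construction is what later powers the $\Omega(1/\eps)$ query lower bound of Theorem~\ref{thm:query2D}, which is why it is carried out directly in dimension two.
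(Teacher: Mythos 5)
Your overall strategy matches the paper's (membership via a gradient-descent-as-local-search reduction to \textsc{Localopt}; hardness from \iter/ via a planar construction evaluable with $O(1)$ calls to $C$), but both directions are missing precisely the ideas that make the argument work. For membership, discretising a box $[-T,T]^d$ and letting $N$ map $v$ to a rounded gradient step ``when that step decreases $f$ by more than the discretisation margin, and to $v$ itself otherwise'' creates spurious \textsc{Localopt} solutions at the boundary of the truncated domain: a grid point near $\partial[-T,T]^d$ whose gradient is large but points outside the box (or whose descent step leaves the box) satisfies $P(N(v))\ge P(v)$ yet is neither an $\eps$-stationary point nor a witness of any violation, since a genuinely $B$-bounded, $L$-smooth $f$ can be steeply decreasing at the boundary and flatten out only beyond it. Your remark that the gradient-descent iterates from the origin stay in the box is irrelevant, because a \textsc{Localopt} solution need not lie on that trajectory. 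The paper closes this hole by declaring a grid point $v$ \emph{valid} only if $f(v)\le f(0)-(\eps/(2\sqrt{d}))\norm{v}_2$, assigning all invalid points potential $B+1$ and neighbour $0$; boundedness then forces every boundary point to be invalid (or to expose a violation), so no fallacious boundary solutions exist. Some device of this kind is needed and is absent from your sketch.

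For hardness, the unconstrained domain is the real difficulty, and your construction does not resolve it. If $f$ ``climbs with bounded slope away from $\Gamma$'' inside a large box, you must say what happens on the rest of $\R^2$: if the climb continues, $f$ is unbounded, contradicting the $B$-bound you need for the promise version; if it levels off or is cut off outside the box, you create $\eps$-stationary points far from $\Gamma$ that encode no \iter/ solution, and solutions of \stationary/ may be reported anywhere in $\R^d$. Relatedly, a bounded $C^1$ function realising your picture must attain large values (local maxima) somewhere, and those critical points must also be routed to \iter/ solutions --- your single descending path with transverse V-shape gives the minima a home but not the maxima. The paper's construction is organised around exactly these constraints: $f$ is a periodic tiling of a function $g$ on $[0,M]^2$ with matching boundary values and gradients, the coloured-region/background structure guarantees (via a finite case analysis of bicubically interpolated cells) that no $0.01$-stationary point exists outside two ``PLS boxes'', and \emph{two} boxes are used, one where all local minima are trapped and one, rotated, where all local maxima are trapped, both wired to the \iter/ circuit. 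Without a mechanism of this sort (periodicity or an equivalent way to keep $f$ bounded on all of $\R^2$ while excluding spurious critical points, plus handling of the maxima), the hardness direction does not go through; the remaining differences in your sketch (exponentially small slopes rather than $O(1)$ slopes with a final rescaling, padding extra coordinates) are cosmetic by comparison.
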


If we combine Theorem \ref{thm:completeness} with the $\CLS$-completeness result of finding KKT points in constrained optimization we get the following very interesting corollary for the relation between stationary points in constrained and unconstrained optimization.

\begin{corollary} \label{cor:unconstrainedVSconstrained}
  There is an efficient black-box reduction from finding approximate KKT points in constrained optimization to finding approximate stationary points in unconstrained optimization. On the other hand, there is no efficient black-box reduction from finding approximate stationary points in unconstrained optimization to finding approximate KKT points in constrained optimization.
\end{corollary}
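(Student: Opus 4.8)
The plan is to prove the two halves of the statement separately. In both cases the engine is the $\PLS$-completeness of \stationary/ from Theorem~\ref{thm:completeness}, combined with what is already known about the complexity of finding approximate KKT points, and in the second half with a classical oracle separation for $\TFNP$ search classes. The unifying point is that \emph{all} the reductions involved are ``local'' — each query to the target instance is answered using only a bounded number of queries to the source instance, and solutions are translated back with a bounded number of queries — and hence relativize, so they compose in the black-box model. Theorem~\ref{thm:completeness} already asserts that its hardness reduction has this property, which is what makes the argument go through at the black-box level.

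For the positive direction (KKT reduces to \stationary/), I would argue as follows. By \cite{FearnleyGHS22-gradient} finding approximate KKT points lies in $\CLS$, and $\CLS\subseteq\PLS$ by definition of $\CLS$. The membership of the KKT problem in $\CLS$ is witnessed by (projected) gradient descent, which accesses $f$ and $\nabla f$ only as a black box, and the inclusion $\CLS\subseteq\PLS$ is a syntactic, relativizing reduction; both therefore hold in the black-box setting. Hence the black-box KKT problem reduces, in a black-box way, to the black-box version of a $\PLS$-complete problem such as \iter/ (which, by \citep{Morioka01-Mthesis-PLS}, also characterizes black-box $\PLS$). On the other side, Theorem~\ref{thm:completeness} supplies a black-box-preserving reduction from \iter/ to \stationary/. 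Composing the two black-box reductions yields the desired black-box reduction from finding approximate KKT points in constrained optimization to finding approximate stationary points in unconstrained optimization.

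For the impossibility, the plan is to derive a contradiction with a known relativized separation. Suppose toward a contradiction that there is an efficient black-box reduction from \stationary/ to the KKT problem. By Theorem~\ref{thm:completeness}, \stationary/ is $\PLS$-hard via a black-box-preserving reduction, so the black-box analogue of $\PLS$ reduces, in a black-box way, to \stationary/, and hence — by the assumed reduction — to the KKT problem. But the KKT problem lies in the black-box analogue of $\CLS$, and $\CLS\subseteq\PPAD$ by definition. Chaining these reductions would show that $\PLS$ is contained in $\PPAD$ relative to every oracle. This contradicts the classical fact that there is an oracle relative to which $\PLS\not\subseteq\PPAD$ — equivalently, the black-box separation $\CLS\subsetneq\PLS$ — which is part of the standard body of oracle separations for $\TFNP$. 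Hence no such black-box reduction can exist.

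The step I expect to be the main obstacle is not any single calculation but the careful bookkeeping: one must verify that every reduction invoked above — the $\CLS$-membership of the KKT problem, the inclusion $\CLS\subseteq\PPAD\cap\PLS$, the $\PLS$-completeness of \iter/, and the reduction of Theorem~\ref{thm:completeness} — is genuinely black-box preserving, and that they all compose under exactly the notion of ``efficient black-box reduction'' used in the statement of the corollary. A secondary point is to pin down the cleanest oracle separation to cite: it suffices that $\PLS\not\subseteq\PPAD$ relative to some oracle (since $\CLS\subseteq\PPAD$), but one may equally invoke the known black-box separation of $\CLS$ from $\PLS$ directly. I would also note, but not pursue, that a purely optimization-flavored impossibility proof using the query-complexity gap between Theorems~\ref{thm:query2D} and~\ref{thm:algo-constrained} in dimension two seems too fragile, as it would require the reduction to preserve the dimension and to avoid a super-polynomial loss in the precision $\eps$; the oracle-separation route is the robust one.
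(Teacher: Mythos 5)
Your proposal is correct and matches the paper's argument: both directions are obtained by combining the black-box-preserving $\PLS$-completeness of \stationary/ (Theorem~\ref{thm:completeness}), the $\CLS$-completeness of the KKT problem from \cite{FearnleyGHS22-gradient}, and the black-box separation $\PLS \nsubseteq \CLS$ (derived from the oracle separation $\PLS \nsubseteq \PPAD$). You simply spell out in more detail the composition of relativizing reductions that the paper's terse "immediate corollary" leaves implicit.
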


\begin{proof}
  This is an immediate corollary of the following three results: Theorem \ref{thm:completeness}, the $\CLS$ completeness of finding approximate KKT points of \cite{FearnleyGHS22-gradient}, and the black-box separation $\PLS \nsubseteq \CLS$, which follows from $\PLS \nsubseteq \PPAD$ \citep{BureshM04-NP-search-problems,BussJ12-propositional-NP-search,GoosHJMPRT22-separations}.
\end{proof}

\noindent In Section \ref{sec:inclusion} we show that \stationary/ is in $\PLS$, and in Section \ref{sec:hardness} that it is $\PLS$-hard. Finally, in Section \ref{sec:tight2D} we show a black-box query complexity result that we get from Theorem \ref{thm:completeness} when $d = 2$.

\subsection{Membership in $\PLS$} \label{sec:inclusion}

  The idea for showing the membership of \stationary/ in $\PLS$ is very similar with the well-known argument that gradient descent finds and $\eps$-stationary point in $O(B \cdot L/\eps^2)$ steps. The argument is that if we start from a point $x$ such that $\norm{\nabla f(x)}_2 \ge \eps$ and we apply a gradient descent step with step size $1/L$ then the function value $f(x')$ in the new point $x'$ has to be at least $\eps^2/L$ smaller than $f(x)$. Since $f$ is $B$-bounded this can only happen $O(B \cdot L/\eps^2)$ times and hence within this number of steps gradient descent has to find an $\eps$-stationary point.

  The above argument suggests that the value $f(x)$ can be used as a potential function and the gradient descent update as a neighboring function to show the membership of \stationary/ in $\PLS$. The issue with that is that our domain is unbounded and hence it is not possible at first glance to define a finite domain which would be necessary in order to provide a reduction to \textsc{LocalOpt}. Of course, taking a closer look at the gradient descent argument, we observe that we can provide an upper bound on the distance $R$ that gradient descent needs to travel before it reaches an $\eps$-stationary point. Unfortunately, this is still not enough since truncating all the points that have distance more than $R$ will create fallacious solutions on the boundary.

  To resolve this we need one more idea: focus only on points whose distance from the initial point and their value suggest that they could be points visited by gradient descent starting from the origin. In this way we are able to show that there are no ad-hoc solutions created on the boundary, which proves the following lemma. The complete proof of the lemma is presented in Appendix \ref{app:inclusion}.

  \begin{lemma} \label{lem:inclusion}
    It holds that \stationary/$ \in \PLS$. 
  \end{lemma}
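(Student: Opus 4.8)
The plan is to formalize the gradient-descent-as-local-search intuition that the paper has already sketched, by reducing \stationary/ to \textsc{LocalOpt}. The core object is gradient descent with step size $1/L$: from a point $x$ with $\norm{\nabla f(x)}_2 \ge \eps$, the update $x' = x - \tfrac1L \nabla f(x)$ satisfies $f(x') \le f(x) - \tfrac{1}{2L}\norm{\nabla f(x)}_2^2 \le f(x) - \tfrac{\eps^2}{2L}$ by $L$-smoothness. Since $f$ is $B$-bounded, this strict decrease can occur at most $4BL/\eps^2$ times, so an $\eps$-stationary point (or a violation of the assumed properties, discovered while checking them along the way) is reached within $T := \lceil 4BL/\eps^2\rceil$ steps, and it lies within distance $R := T \cdot \eps_{\max}/L$ of the origin, where $\eps_{\max}$ is an a priori Lipschitz-type bound on $\norm{\nabla f}$ that we can extract (or else get a violation). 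So the "morally correct" domain is a discretized ball of radius $R$ around the origin, with potential $P(x) = f(x)$ (suitably quantized to integers) and neighbor function $N(x) = $ one gradient step.

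First I would set up the bookkeeping: fix a grid fine enough that one can simulate gradient descent on it while keeping the potential monotone — essentially track the pair (point, step count) or work on the trajectory directly, rounding coordinates to $\poly\log(B,L,1/\eps)$ bits. Then I would define $N$ and $P$ so that (i) every point not of the prescribed form maps under $N$ to a canonical "sink-seeking" successor with strictly larger potential unless it is a genuine solution, and (ii) a local optimum — a point $v$ with $P(N(v)) \ge P(v)$ — forces $\norm{\nabla f(v)}_2 \le \eps$, or exhibits one of the allowed violations (Lipschitzness, smoothness, boundedness, or $\calC_{\nabla f}$ incorrect). The key design point, as the paper flags, is the boundary: naively truncating the ball of radius $R$ creates spurious local optima on the sphere. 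The fix is to restrict $P$'s "valid" region to points $x$ that are consistent with being reachable by $T'$ gradient steps from the origin for some $T' \le T$ — i.e., whose distance from $0$ and whose value $f(x)$ jointly satisfy $f(x) \le f(0) - \tfrac{\eps^2}{2L}\cdot(\text{number of steps implied by } \norm{x})$ type inequalities — so that any point on the artificial boundary either already certifies a solution/violation or has a legitimate downhill successor staying inside. Points failing the consistency test are shunted (via $N$) toward the origin / toward decreasing the step count, which has bounded potential, so no fake sink survives there either.

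After the construction, the proof is a case analysis verifying totality: take any $v$ with $P(N(v)) \ge P(v)$; either $v$ is flagged inconsistent (then by design $N$ decreased the auxiliary count and $P$ strictly dropped — contradiction, so this case is vacuous), or $v$ is a legitimate trajectory point, in which case the gradient-step inequality above shows that $P(N(v)) < P(v)$ whenever $\norm{\nabla f(v)}_2 > \eps$ and all of Lipschitzness/smoothness/boundedness hold at the points queried; hence $v$ must be an $\eps$-stationary point or yield a violation. Since the circuits $N,P$ are computable in time $\poly(n, \log B, \log L, \log(1/\eps), \text{size}(\calC_f), \text{size}(\calC_{\nabla f}))$ and $n = \poly\log(B,L,1/\eps) + \ldots$, this is a valid polynomial-time reduction, establishing \stationary/$\in \PLS$; full details are deferred to Appendix~\ref{app:inclusion}.

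\textbf{Main obstacle.} The hard part will be the boundary argument: making the "reachable-by-gradient-descent" predicate simultaneously (a) efficiently checkable by a circuit, (b) strong enough that every boundary point is either a solution, a violation, or has an interior downhill successor, and (c) robust to the coordinate rounding — since after discretization a gradient step only approximately decreases $f$, one must budget the rounding error against the $\eps^2/(2L)$ decrease (e.g. by choosing the grid resolution $\le \eps^2/(cBL)$ for a suitable constant $c$, or by using a slightly larger effective step-count bound $T$). Handling the interaction of rounding with the Lipschitz/smoothness violation-detection is the fiddly part; everything else is routine once the domain is set up correctly.
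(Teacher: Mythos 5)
Your proposal follows the same two key ideas as the paper's proof --- use $f$ itself as the potential, and restrict attention to ``valid'' points (ones whose distance from the origin and function value are jointly consistent with being reachable by gradient flow from $0$) so that the artificial domain boundary contains no spurious local optima --- but it takes a genuinely different route for the neighbor function, and that is where the two arguments diverge in cleanliness. You take $N(x)$ to be an actual gradient step $x - \frac1L \nabla f(x)$, rounded to the grid, and rightly flag the resulting rounding analysis as ``the fiddly part''; this is exactly what the paper avoids. The paper works on a regular grid $G_\gamma$ with spacing $\gamma = \eps/(\sqrt d L)$ and simply sets $N(v) := \argmin_{w \in \Gamma(v)} P(w)$ over the $2d$ immediate grid neighbors $\Gamma(v)$: the gradient is used only in the \emph{analysis} (if $\|\nabla f(v)\|_2 > \eps$ then some axis direction $(-1)^s e_i$ has $\langle \nabla f(v), (-1)^s e_i\rangle \le -\eps/\sqrt d$, so Taylor's theorem gives $f(v + \gamma(-1)^s e_i) \le f(v) - \frac{\eps\gamma}{2\sqrt d} = f(v) - \frac{\eps}{2\sqrt d}\|w-v\|_2$, and the per-unit-distance drop exactly matches the validity threshold, so the triangle inequality shows the chosen neighbor is again valid). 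This buys three things over your version: there is no off-grid point and hence no rounding budget to balance against $\eps^2/(2L)$; the validity invariant propagates without slack; and the circuits $N,P$ never evaluate $\calC_{\nabla f}$, which is why the paper's remark after Lemma~\ref{lem:inclusion} can claim membership even when only $\calC_f$ is supplied. Your auxiliary step-count bookkeeping is also unnecessary --- the paper just assigns every invalid point potential $B+1$ and $N(v) = 0$, which trivially rules them out as local optima. Your approach is plausible and could likely be pushed through with a sufficiently fine grid (roughly $\delta \lesssim \eps^2/L^2$ rather than the $\eps^2/(cBL)$ you suggest, since $B$ does not naturally enter the per-step error budget), but the argmin-over-grid-neighbors trick is the move that makes the boundary argument close cleanly and is worth internalizing.
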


\begin{remark}
  In Appendix \ref{app:inclusion} we in fact prove that \stationary/ is in $\PLS$ even when we are only given the Turing machine $\calC_f$, and we do not require a Turing machine $\calC_{\nabla f}$ that computes the gradient, assuming that we have a promise that the smoothness of $f$ holds. Moreover, for the $\PLS$-membership we do not need the boundedness assumption $\abs{f(x)} \le B$ but we only need that $f(0) - f(x) \le B$ for all $x \in \R^d$, where $0$ can be replaced by any fixed initial point $x_0$. 
\end{remark}

\subsection{$\PLS$-hardness} \label{sec:hardness}

  In this section we show that the problem \stationary/ is $\PLS$-hard even when the number of dimensions is $d = 2$. For $d = 1$ finding stationary points can be done in $O(\log(L B/ \eps))$ as was shown in \cite{chewi23a}. Hence, our result implies that for $d \ge 2$ there is no $O(\poly\log(L B/ \eps))$ algorithm for solving \stationary/ unless $\FP = \PLS$.

  \begin{lemma} \label{lem:hardness}
    The problem \stationary/ is $\PLS$-hard. Moreover, the $\PLS$-hardness of \stationary/ holds even if we are promised that the boundedness, the Lipschitzness, and the smoothness of the input of \stationary/ hold. Finally, our hardness reduction is black-box preserving.
  \end{lemma}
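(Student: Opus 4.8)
The plan is to reduce from \iter/ (Definition~\ref{def:iter}), which is $\PLS$-complete \cite{Morioka01-Mthesis-PLS}. Given an instance $C:[2^n]\to[2^n]$ with $C(1)>1$ --- after a trivial preprocessing we may also assume $C(v)\ne v$ for all $v$, so that the solutions are exactly the indices $v$ with $C(v)<v$ --- I would build, in polynomial time and using only local evaluations of $C$, a continuously differentiable $f:\R^2\to\R$ together with parameters $B,L,\eps$ of bit-length $\poly(n)$, so that from any $\eps$-stationary point of $f$ one can read off an index $v$ with $C(v)<v$. The core of the construction is a piecewise-linear ``ravine'' landscape on a grid of roughly $2^n\times 2^n$ unit cells: each index $v$ is attached to a cell (or a small fixed-size block of cells); whenever $C(v)>v$ a shallow descending channel is carved from the cell of $v$ toward the cell of $C(v)$, with the value dropping by a fixed amount $\delta=\Theta(\eps)$ along it; and every point not lying inside a channel sits on a slope of gradient magnitude at least $2\eps$ that drains into the nearest channel. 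Routing the channels through the plane without self-intersections is done with a constant-size library of ``straight'', ``turn'' and ``crossover'' gadgets, in the spirit of the grid-embedding technique behind the hardness of Brouwer/Sperner-type problems and, closest to the present setting, the labyrinth of \cite{BubeckM20-trap-gradient}; the routing is organized so that the channels through a given cell are determined by $\poly(n)$ evaluations of $C$ in its vicinity. Since the value drops by $\delta$ along every channel step and the \iter/ path has at most $2^n$ steps, the total variation of $f$ is $O(2^n\delta)=O(1)$, so $f$ is $\Theta(1)$-bounded; choosing the mollification width appropriately makes $f$ genuinely $\Theta(1)$-smooth and $\Theta(1)$-Lipschitz, which is what lets the reduction also cover the promise version.

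Correctness amounts to matching $\eps$-stationary points of $f$ to \iter/ solutions. Inside a channel the gradient has magnitude strictly above $\eps$ and points downstream, and on a slope it has magnitude at least $2\eps$ and points into a channel; since $C$ is irreflexive, the only cells at which a channel can ``bottom out'' are those of indices $v$ with $C(v)<v$, which have no outgoing channel and are therefore drained dead ends --- i.e.\ exactly the solutions. The violation outputs allowed by \stationary/ serve only as a safety net: if the point returned by a solver falls where the local evaluation of $C$ turns out to be inconsistent, or if it witnesses a violation of the stated boundedness, Lipschitz or smoothness constants of the mollified $f$, these are mapped to the corresponding ``alternative'' outputs, and for a genuine \iter/ instance none of them ever arise. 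Evaluating $f$ and $\nabla f$ at a point only requires locating its cell and querying $C$ at $\poly(n)$ nearby indices, and the decoding of a solution is equally local, so the reduction is black-box preserving --- precisely the property that Section~\ref{sec:tight2D} exploits to obtain Theorem~\ref{thm:query2D}.

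The crux --- and the feature separating the unconstrained case from the bounded-domain constructions of \cite{FearnleyGHS22-gradient,BubeckM20-trap-gradient} --- is that $f$ must be defined on all of $\R^2$ while remaining bounded and smooth, and such a function unavoidably has $\eps$-stationary points far from the origin: the gradient flow started at any point reaches $\{\,\norm{\nabla f}_2\le\eps\,\}$ within distance $O(B/\eps)$, so that set is a net of the plane, and a naive truncation of the active grid would in particular produce spurious stationary points along its boundary. The reduction cannot avoid such extra stationary points; it must instead guarantee that every one of them still decodes to a solution of $C$. I would achieve this by surrounding the active grid with a ``drainage region'' that funnels everything back toward it and is capped smoothly at the boundedness value --- tiling $\R^2$ with suitably modified copies of the gadget is one concrete realization --- and then verifying that the stationary points this necessarily creates, including those forced by the topology of the ambient torus, each decode to a backward edge of $C$. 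This step, together with ensuring the pieces glue $C^1$-smoothly and obey the prescribed constants, is where I expect the bulk of the effort to lie; that it is feasible at all hinges on \iter/ solutions being plentiful (every backward edge is one), which is exactly why the analogous construction would not work for a path-following problem such as \eol/.
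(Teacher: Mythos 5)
Your outline follows the paper's strategy (reduce from \iter/, embed it as a two-dimensional landscape of descending channels, make the function periodic/bounded, insist that every unavoidable extra stationary point still decodes to a solution, keep everything locally computable so the reduction is black-box preserving), but two of its load-bearing steps do not hold as stated. First, the ``trivial preprocessing'' that makes $C$ fixed-point-free so that solutions are exactly the backward edges does not exist, and for a black-box-preserving reduction it is provably impossible: a fixed point of $C$ is \emph{not} a solution of \iter/, yet any fixed-point-free circuit must map the maximum element backwards. On the standard hard family (a single forward path $1 \to a_1 \to \dots \to a_k$ ending at a fixed point, all other nodes fixed), any fixed-point-free modification therefore creates a backward edge at a node far from the hidden path, and from such a node no genuine \iter/ solution can be recovered with polynomially many queries to $C$. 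Consequently your correctness claim ``the only cells at which a channel can bottom out are those of indices $v$ with $C(v)<v$'' is wrong: channels also dead-end at cells $u$ with $C(u)>u$ whose successor has no outgoing channel, and the decoding must distinguish $C(C(u))=C(u)$ (output $u$) from $C(C(u))<C(u)$ (output $C(u)$). This is how the paper handles it (lines are built for every $u$ with $C(u)>u$, and a dead end is decoded by two further evaluations of $C$); the repair is not hard, but the proposal's stated decoding fails.

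Second, you correctly identify that a bounded smooth $f$ on all of $\R^2$ is forced to have additional critical points (in the periodic realization, at least local maxima on each fundamental domain), but you leave the mechanism that makes these decode to solutions as ``the bulk of the effort'' --- and that mechanism is precisely the new content of the proof, not a verification detail. The paper's answer is concrete: the instance is encoded \emph{twice}, in a PLS Box (A) which is the only region where local minima can form and in a $180^{\circ}$-rotated PLS Box (B) which is the only region where the forced local maxima can form, with a background of monotone blue/red ridges whose relative values funnel all descent (resp.\ ascent) into Box (A) (resp.\ Box (B)); that no other $0.01$-stationary points arise at turns, crossings, channel/background interfaces, or the seams of the periodic tiling is then established by an exhaustive finite case analysis of the bicubically interpolated cells up to symmetry, as in \cite{FearnleyGHS22-gradient}. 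Your drainage-region/tiling sketch and mollification would need an analogous box for the maxima and an equally systematic no-spurious-stationary-point verification; as written, the proposal assumes this step rather than proving it, and its justification (``every backward edge is a solution'') again leans on the invalid normal form rather than on the actual reason the duplication is harmless, namely that \emph{both} copies' dead ends decode to \iter/ solutions.
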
  

  We provide the full proof of Lemma \ref{lem:hardness} in Appendix \ref{app:hardness}. Below we provide a high-level description of our construction and mention the key components of our proof.
  \medskip
  
  \noindent \textit{High-level Proof Sketch of Lemma \ref{lem:hardness}.} We construct $f$ periodically, i.e., we define a function $g$ over a square $[0, M]^2$ and then to get $f$ we repeat copies of $g$ in the whole plane $\R^2$ as shown in Figure \ref{fig:periodic}. If $g$ satisfies some boundary properties then this repetition of $g$ yields a bounded and smooth function $f$. So, we can now focus on the construction of $g$.

  \begin{figure}[t]
    \centering
    \includegraphics[scale=0.3]{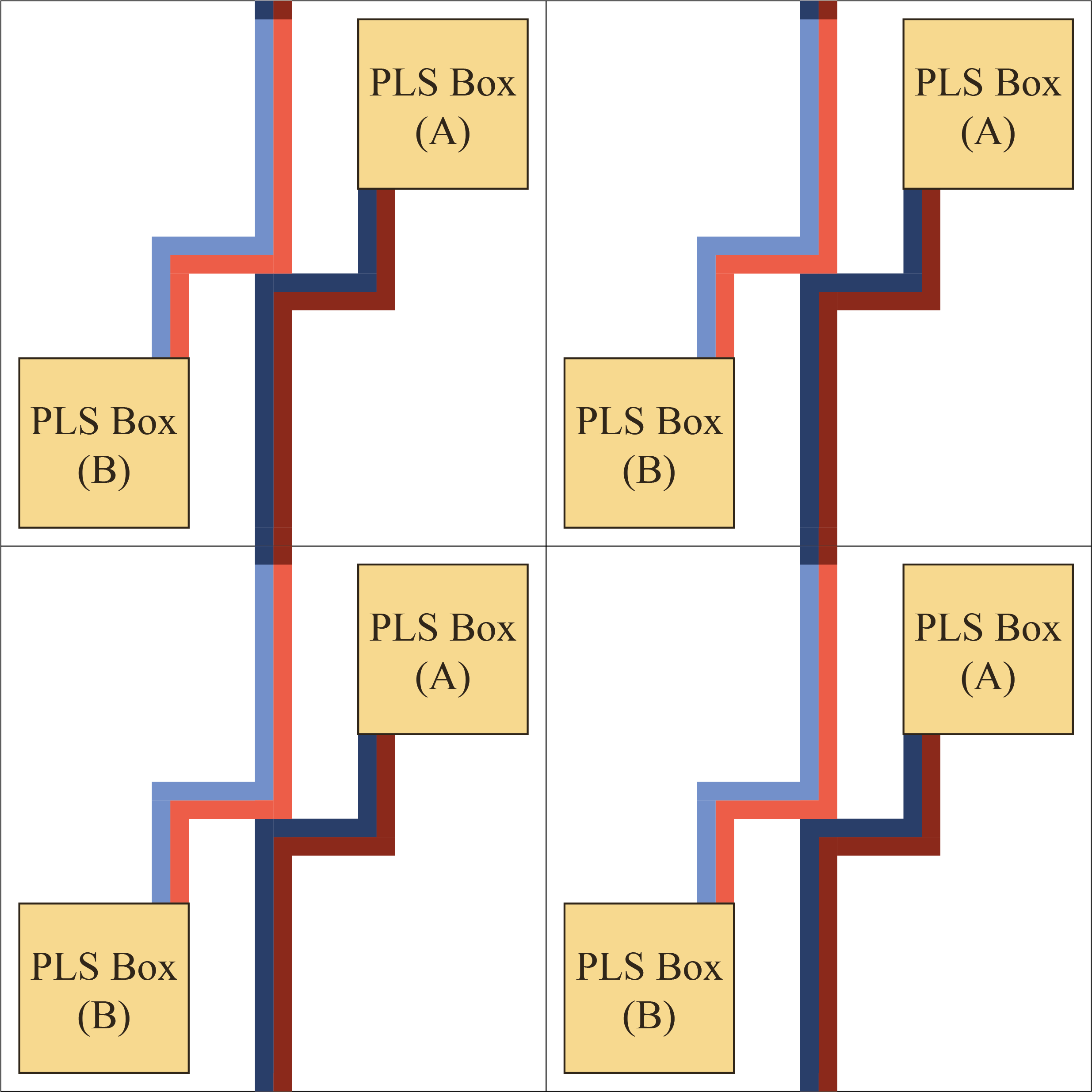}
    \caption{High-level structure of the construction of $f$ in our hardness result of Lemma \ref{lem:hardness}.}
    \label{fig:periodic}
  \end{figure}

  The function $g$ is a continuously differentiable function that is defined over $[0, M]^2$ which means that it will attain a minimum at some point $x_{\min}$ and a maximum at some point $x_{\max}$. Obviously we have to have that $\nabla g(x_{\min}) = 0$ and $\nabla g(x_{\max}) = 0$. Hence we want to ensure two things: (1) that $x_{\min}$ and $x_{\max}$ are in places that correspond to solutions of an \iter/ instance, and (2) all stationary points correspond to local minima and local maxima. For this, we place two boxes in the construction of $g$: the PLS Box (A) and the PLS Box (B) (see Figure \ref{fig:periodic}). The PLS Box (A) is the only place where local minima can be formed and the PLS Box (B) is the only place where local maxima can be formed. We also want to make sure that no solution can arise in the space outside PLS Box (A) and PLS Box (B). To do that we use a structure shown in Figure \ref{fig:periodic} where the blue colors correspond to low function values, the red colors to high function values and the background has colors between blue and red. Both blue and red values are decreasing as each one of the coordinates increase we move up and to the right and the background decreases as we go from left to right. Hence:
  \begin{itemize}[noitemsep,topsep=0pt,parsep=0pt,partopsep=0pt]
    \item[$\triangleright$] If we start from any point in the background and we are looking for a local minimum then we will move to the right until we hit the blue region. Once we are inside the blue region we will move up and to the right until we are inside the PLS Box (A).
    \item[$\triangleright$] Similarly, if we start from any point in the background and we are looking for a local maximum then we will move to the left until we hit the red region. Once we are inside the red region we will move down and to the left until we are inside the PLS Box (B).
  \end{itemize}
  The above observation gives an intuition about why local minima can appear only in PLS Box (A) and local maxima only in PLS Box (B) in our construction as well as about why we should not expect any stationary points outside of these PLS boxes. We formally show these properties about the space outside the PLS boxes in Appendix \ref{sec:structureOfG}.

  \noindent \textbf{PLS Boxes.} This is where we use the \iter/ instance\footnote{Our PLS boxes have many similarities with the PLS Labyrinths of \cite{FearnleyGHS22-gradient}. Nevertheless, our construction is more challenging because of a different background that we need to use since we define $f$ on the whole plane.}. At a very high level, starting from an \iter/ instance we move the blue-red in the bottom left corner of PLS Box (A) following the paths of the circuit $C$ and in this way we make sure that when the blue-red line ends, which is the only place where a local minimum can be formed, will be in places where the \iter/ problem with input $C$ has a solution. The construction of PLS Box (B) is similar except that the orientation of the box is rotated by $180^{o}$. We give the formal construction of the PLS boxes in Appendix \ref{sec:PLSBoxes}.

  In the above sketch we skipped an important part of our proof which is how to construct this piecewise function while satisfying smoothness of $f$. For this we use bi-cubic interpolation techniques (see Appendix \ref{sec:interpolation}) and we need to make sure that all the regions that we construct satisfy the conditions to apply bi-cubic interpolation without creating stationary points in places other than local minima and local maxima.

  We defer all remaining details of the proof of Lemma \ref{lem:hardness} to Appendix \ref{app:hardness}.

\subsection{Tight Query Bounds for 2D} \label{sec:tight2D}

As a corollary of the proof of Lemma \ref{lem:hardness} we can show the following black box lower bound for finding $\eps$-stationary points in two dimensions.

\begin{theorem} \label{thm:query2D}
  For any deterministic algorithm $\calA$ that computes $\eps$-stationary points and any starting point $(x_0, y_0) \in \R^2$ it holds that there exists a $1$-bounded, $1$-Lipschitz, and $1$-smooth function $f : \R^2 \to \R$, such that
  $\calT(\calA, (x_0, y_0); f) \ge  \Omega\p{\frac{1}{\eps}}$.
\end{theorem}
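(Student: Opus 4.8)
The plan is to derive Theorem~\ref{thm:query2D} as a corollary of the hardness construction used for Lemma~\ref{lem:hardness}, exactly as the text announces, by extracting from that construction a ``hard distribution'' of instances on which any deterministic algorithm must make $\Omega(1/\eps)$ queries. The starting point is the reduction from \iter/: for every Boolean circuit $C : [2^n] \to [2^n]$ with $C(1) > 1$ one builds a $1$-bounded, $1$-Lipschitz, $1$-smooth function $f_C : \R^2 \to \R$ whose $\eps$-stationary points all lie inside the PLS boxes and encode solutions of the \iter/ instance. What makes this suitable for a query lower bound is that the reduction is \emph{black-box preserving} (as asserted in Lemma~\ref{lem:hardness}): the value $f_C(x)$ at a point $x$ depends only on the behaviour of $C$ on a small, locally-determined set of inputs, so an algorithm that queries $f_C$ (or $\nabla f_C$) at a point effectively learns only a few evaluations of $C$.

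First I would fix the combinatorial core: take $C$ to be (an encoding of) a single long path, i.e.\ the instance of \iter/ corresponding to the line $1 \to 2 \to \cdots \to T$ with $C(v) = v+1$ for $v < T$ and $C(T) = T$, where $T = \Theta(1/\eps)$. Any deterministic algorithm that learns $C$ only through local queries and must output the unique solution $v = T$ needs $\Omega(T)$ queries — this is the standard query lower bound for finding the end of a line / for \iter/, proven by an adversary argument that keeps the path consistent with all answers given so far and reveals one new edge per query. Then I would transfer this to the function setting: the construction turns this path into a ``trail'' inside a PLS box along which $f_C$ is essentially fixed independently of where exactly the path ends, so that from $o(T)$ value/gradient queries the algorithm cannot distinguish which of many continuations the path takes, and in particular cannot locate the terminal box where the (essentially unique) $\eps$-stationary point sits. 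The periodic tiling of $g$ over all of $\R^2$ handles the ``any starting point $(x_0,y_0)$'' quantifier for free: whatever $(x_0,y_0)$ is, it lies in some copy of the fundamental square, and by translating $C$'s encoding we can place the hard trail far (in query-distance) from that copy, so the adversary argument still gives $\Omega(T) = \Omega(1/\eps)$.

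The key steps, in order, are: (i) state precisely the locality property of the reduction — there is a constant-size neighbourhood such that $f_C(x)$ and $\nabla f_C(x)$ are determined by $C$ restricted to $O(1)$ inputs computable from $x$; (ii) set up the adversary/oracle for \iter/ on the path instance and recall its $\Omega(T)$ deterministic query lower bound; (iii) compose: simulate a purported fast algorithm $\calA$ for \stationary/ against $f_C$, answer each of its value/gradient queries by consulting the \iter/ adversary on the $O(1)$ relevant inputs, and observe that a correct $\eps$-stationary point output yields an \iter/ solution, so $\calA$ makes $\Omega(T) = \Omega(1/\eps)$ queries; (iv) check the normalization — that the specific $f_C$ produced is genuinely $1$-bounded, $1$-Lipschitz, $1$-smooth (rescaling constants if necessary, which only changes the hidden constant in $\Omega(1/\eps)$); (v) handle the arbitrary starting point by translating the fundamental domain.

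The main obstacle I expect is step~(i)–(iii): making the ``locality'' of the bi-cubic-interpolation-based construction precise enough that answering a function/gradient query really costs the adversary only $O(1)$ circuit evaluations, and simultaneously arguing that the algorithm gains \emph{no} useful global information from a query in the ``background'' region (the region outside the PLS boxes, where $f$ is some fixed smooth landscape independent of $C$). One has to be careful that querying near the boundary of a PLS box, or near the blue/red trail, does not leak more than one edge of the path; this is essentially the same care needed to show the reduction is black-box preserving, so it should follow from the details in Appendix~\ref{app:hardness}, but it is the step where the proof could go wrong if the interpolation were not sufficiently ``compactly supported'' around each trail segment. A secondary, more routine nuisance is bookkeeping the constants so that the final function is $(1,1,1)$-normalized exactly as stated rather than merely $(O(1),O(1),O(1))$-normalized.
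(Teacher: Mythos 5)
Your proposal matches the paper's own proof (Appendix~\ref{app:query2D}) essentially step for step: a deterministic adversary lower bound of $\Omega(2^n)$ for black-box \iter/, composed through the observation that each value/gradient query to the constructed $f_C$ reveals only $O(1)$ evaluations of $C$ (the paper uses ``at most $4$''), followed by rescaling $f$ by $1/M$ with $M=\Theta(2^n)$ so that $B,L=O(1)$ and the target precision becomes $\eps=\Theta(1/M)$. One small slip in exposition: you first say ``take $C$ to be a single long path'' --- against a \emph{fixed} $f_C$ a deterministic algorithm could be hardcoded to output the answer with zero queries --- but you immediately correct this by invoking the adversary that extends the path on demand, which is the mechanism the paper actually uses, so the idea is right even if the opening phrasing is misleading.
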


We prove Theorem \ref{thm:query2D} in Appendix \ref{app:query2D}. The proof is based on the proof of Lemma \ref{lem:hardness}. In particular, it is not hard to see that if we only have black box access to \iter/ then any algorithm would need at least $2^n$ time to find a solution. If we combine this tight bound for \iter/ together with the proof of Lemma \ref{lem:hardness} we obtain Theorem \ref{thm:query2D}.

In the next section we present an algorithm with complexity $O(1/\eps)$  which combined with Theorem \ref{thm:query2D} resolves the black box query complexity of finding stationary points when $d = 2$.

\section{The Gradient Flow Parallel Trap Algorithm}

In this section we present the Gradient Flow Parallel Trap (GFPT) algorithm for computing $\eps$-stationary points in both the unconstrained and constrained settings. This algorithm is inspired by the Gradient Flow Trapping (GFT) algorithm proposed by \citet{BubeckM20-trap-gradient} for the constrained setting. For $d=2$ their GFT algorithm yields an upper bound that is almost tight, namely up to $\log(1/\eps)$ factors. The GFPT algorithm we propose here uses some core ideas from the GFT algorithm to achieve a \emph{tight} upper bound for $d=2$. Furthermore, GFPT is in fact simpler than GFT: while GFT relies on two subroutines (\emph{parallel trap} and \emph{edge fixing}), GFPT only uses an improved version of one of the two subroutines (namely, \emph{parallel trap}), without the need for the second subroutine.

We first state our result in the unconstrained setting, which is our main focus in this paper.

\begin{theorem}\label{thm:algo-unconstrained}
Let $d \geq 2$ and $f: \mathbb{R}^d \to [0, \infty)$ be such that $\nabla f$ is $L$-Lipschitz-continuous.
For any $\eps > 0$, the GFPT algorithm with starting point $x_0 \in \mathbb{R}^d$ returns an $\eps$-stationary point using at most
$$O(d)^{\frac{5d-1}{4}} \left(\frac{\sqrt{L f(x_0)}}{\eps}\right)^{d-1}$$
queries. In particular, for $d=2$ the number of queries is $O(\sqrt{L f(x_0)}/\eps)$.
\end{theorem}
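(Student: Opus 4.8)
\medskip\noindent\textbf{Proof plan.} The plan is to analyze GFPT as an adaptive ``gradient flow trapping'' procedure in the spirit of \cite{BubeckM20-trap-gradient}, but carried out on an unbounded domain and with the slice-refinement step improved so that no $\polylog(1/\eps)$ factor is lost. First I would normalize: rescaling the domain and the value of $f$ lets us take $L=1$, so the goal becomes $O(d)^{(5d-1)/4}\,N^{d-1}$ queries with $N:=\sqrt{f(x_0)}/\eps$. The only analytic input about $f$ is the descent inequality: since $\nabla f$ is $1$-Lipschitz and $f\ge 0$, every $x$ satisfies $\norm{\nabla f(x)}_2^2\le 2f(x)$. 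Hence the gradient flow $\dot x=-\nabla f(x)$ from $x_0$ stays in $\{f\le f(x_0)\}$, and before it reaches an $\eps$-stationary point its total length is at most $f(x_0)/\eps$ (it runs for time $\le f(x_0)/\eps^2$ while $\norm{\nabla f}_2\ge\eps$, and $\int\norm{\nabla f}_2^2\,dt\le f(x_0)$, so Cauchy--Schwarz bounds the length by $f(x_0)/\eps$). Therefore it suffices to run GFPT inside a single axis-aligned box $B_0\ni x_0$ of side length $\Theta(f(x_0)/\eps)$: the flow from $x_0$ cannot leave $B_0$ before becoming $\eps$-stationary, and the invariant below prevents the algorithm from ever reporting a spurious stationary point on $\partial B_0$.

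Next I would set up the \emph{trap} invariant that GFPT maintains: an axis-aligned box $B$, a ``source'' face $S\subseteq\partial B$ perpendicular to one coordinate axis (initially $S=\{x_0\}$), and a threshold, such that $f$ is below the threshold on $S$, is above it by a gap $\Delta=\Theta(\eps\cdot w)$ on $\partial B\setminus S$, where $w$ is the width of $B$ transverse to $S$, and such that the gradient flow from $S$ stays in $B$ until it is $\eps$-stationary. A quantitative mountain-pass argument — combining the gap $\Delta$ with $1$-smoothness, so that finitely many function values already pin down where $\nabla f$ cannot be small — then shows that such a trap contains an $\eps$-stationary point, and that if moreover $\diam(B)\le\eps$ then every point of $B$ is $\eps$-stationary and can be output. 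The core subroutine, \emph{parallel trap}, refines $B$ using only value queries: it queries $f$ on a $(d-1)$-dimensional grid of spacing $\delta=\Theta(\sqrt{\eps w})$ on (the relevant part of) the mid-slice of $B$ parallel to $S$; using that $f$ varies by only $O(\eps\delta+\delta^2)=O(\Delta)$ across a grid cell, it identifies the discrete ``low region'' reachable from $S$ through sub-threshold values, which it argues tracks the continuous one up to $O(1)$ cells; and it then either passes the trap to the half of $B$ on the $S$-side of the mid-slice (same source, half the transverse width) or, when the low region crosses, pins the transverse coordinate and recurses on a genuinely $(d-1)$-dimensional trap (a sub-box of the mid-slice with a fresh $(d-2)$-dimensional source). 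The key point — and the reason GFPT avoids the $\sqrt{\log(1/\eps)}$ loss of GFT — is that the portion of the mid-slice that must be examined shrinks geometrically from one refinement level to the next, so the per-level query counts form a convergent geometric series instead of $\Theta(\log(1/\eps))$ equal terms.

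For the count, let $Q_d$ be the number of queries GFPT makes in dimension $d$. Over one ``halving chain'' (fixed source, widths $w_k=w_0 2^{-k}$), level $k$ costs $O\big((w_k/\delta_k)^{d-1}\big)=O\big((\sqrt{w_k/\eps})^{d-1}\big)$ up to a $\poly(d)$ bookkeeping factor, and $\sum_k(\sqrt{w_k/\eps})^{d-1}=O\big((\sqrt{w_0/\eps})^{d-1}\big)=O(N^{d-1})$ since $w_0=\Theta(f(x_0)/\eps)$. Each dimension reduction spawns only $\poly(d)$ child subproblems, so $Q_d\le \poly(d)\cdot\big(O(N^{d-1})+\poly(d)\cdot Q_{d-1}\big)$ with $Q_1=O(\log N)$ by binary search; unrolling this recurrence gives $Q_d\le O(d)^{(5d-1)/4}\,N^{d-1}$, the exponent $(5d-1)/4$ being exactly what the recurrence produces once the $\poly(d)$ factors are made explicit. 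For $d=2$ the recursion is shallow: a single halving chain plus one one-dimensional binary search costs $O(N)+O(\log N)=O(N)=O(\sqrt{L f(x_0)}/\eps)$ (undoing the normalization). Correctness then reduces to verifying (i) that $B_0$ with $S=\{x_0\}$ satisfies the invariant (here the descent inequality and the length bound are used to make $f$ large on $\partial B_0\setminus S$), (ii) that \emph{parallel trap} and the dimension reduction both preserve it, and (iii) that the invariant forces the final output to be $\eps$-stationary; since $f$ genuinely meets the promise, no violation output ever occurs.

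I expect the delicate part to be the design and analysis of the improved \emph{parallel trap}: arranging that a single grid query of one mid-slice both decides the branch soundly from finitely many values (this is what forces $\delta=\Theta(\sqrt{\eps w})$, hence $N$, hence the whole bound) and hands the trap invariant, with its gap still $\Theta(\eps w)$, to whichever child is selected — uniformly in $d$ — while keeping the examined portion of each mid-slice geometrically shrinking so that the $\sqrt{\log(1/\eps)}$ slack of GFT disappears. Making the dimension recursion tight enough that it solves to precisely the $O(d)^{(5d-1)/4}$ prefactor is the other bookkeeping-heavy point. By contrast, the normalization, the length bound, and the ``a trap contains an $\eps$-stationary point'' lemma are routine; and the same machinery, specialized to the box $[0,1]^d$, should yield the constrained bound of Theorem \ref{thm:algo-constrained}.
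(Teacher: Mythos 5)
Your plan diverges from the paper's proof at its central structural step, and the divergence leaves a real gap. The GFPT of the paper maintains a triple $(R_t, x_t, \eps_t)$ where $x_t$ is a single anchor \emph{point} (not a source face), the rectangle $R_t$ stays full-dimensional throughout, and the scalar $\eps_t$ grows by a geometrically decaying increment each iteration so that its total drift is bounded by $\eps/4$. The key device is that the parallel trap uses \emph{two} cut slices $E_1, E_2$ at positions $1/3$ and $2/3$ along the longest axis of $R_t$. If a point $z^*$ with $f(z^*) \le f(x_t) - \eps_t\|x_t - z^*\|_2$ is found on (say) $E_1$, the anchor shifts to $z^*$ and the box is truncated at $E_2$ — so the new anchor is always at distance at least $r_t/3$ from the newly created face. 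This standoff distance is exactly what feeds the discretization lemma (\cref{lem:increase-eps}) and lets the $\eps_t \to \eps_{t+1}$ relaxation stay under budget. There is no dimension reduction at all; eliminating GFT's second subroutine is precisely what the paper advertises as the improvement.

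Your proposal instead uses a single mid-slice and, when the low region crosses it, ``pins the transverse coordinate and recurses on a genuinely $(d-1)$-dimensional trap'' — i.e., it reinstates the edge-fixing / dimension-reduction branch that GFPT was designed to remove. This leads to two concrete problems. First, with a single mid-slice, if you instead tried to shift the source to $z^*$ on that slice, $z^*$ would lie on the boundary of the new box, so the standoff needed by the discretization argument vanishes; if you avoid this by pinning and recursing, you must specify the scale and accuracy inherited by the $(d-1)$-dimensional subproblem and show the recursion does not reintroduce the $\sqrt{\log(1/\eps)}$ slack that GFT pays — none of which is argued. Second, the asserted recurrence $Q_d \le \poly(d)\bigl(O(N^{d-1}) + \poly(d)\, Q_{d-1}\bigr)$ with $Q_1 = O(\log N)$, and the claim that it ``produces'' the prefactor $O(d)^{(5d-1)/4}$, are circular: in the paper this exponent comes from a direct non-recursive count, namely $q_t \le 2(\sqrt{d}\,r_t/2\delta_t)^{d-1}$ with $\delta_t \propto \sqrt{r_t (3/4)^{\lfloor t/d\rfloor}}$, the $d$-dependence of $C_1 = 75\sqrt{d}$ and $C_2 = 16d$, and an extra factor $18d$ from summing the geometric series $\sum_t (8/9)^{(d-1)\lfloor t/d\rfloor/2}$; unrolling a dimension recurrence does not reproduce this without making those $\poly(d)$ factors explicit, which your sketch does not do. Your phrase ``examined portion of each mid-slice geometrically shrinking'' also does not capture the actual mechanism for killing the polylog, which is the controlled inflation of $\eps_t$ funded by a convergent series of increments. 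The normalization, the initial box size $\Theta(f(x_0)/\eps)$, the net spacing $\delta = \Theta(\sqrt{\eps w})$, and the mountain-pass/unreachability lemma are all in the right spirit; it is the two-slice parallel trap with anchor shifting (in place of any dimension reduction) that you are missing, and it is exactly the load-bearing idea.
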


Note that if the co-domain is also bounded from above, i.e., $[0,1]$ instead of $[0, \infty)$, then the bound becomes $(\sqrt{L}/\eps)^{d-1}$ for any fixed $d$.

The GFPT algorithm also applies to the constrained setting, which is the setting in which the GFT algorithm was originally stated by \citet{BubeckM20-trap-gradient}.

\begin{theorem}\label{thm:algo-constrained}
Let $d \geq 2$ and $f: [0,1]^d \to \mathbb{R}$ be such that $\nabla f$ is $L$-Lipschitz-continuous on $[0,1]^d$.
For any $\eps > 0$, the GFPT algorithm returns an $\eps$-KKT point (w.r.t. minimization) using at most
$$O(d)^{\frac{5d-1}{4}} \left(\sqrt{\frac{L}{\eps}}\right)^{d-1}$$
queries. In particular, for $d=2$ the number of queries is $O(\sqrt{L/\eps})$, and for $d=3$ it is $O(L/\eps)$.
\end{theorem}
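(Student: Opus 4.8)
\medskip
\noindent\emph{Proof plan.} The plan is to analyze the GFPT algorithm along the same lines as the proof of Theorem~\ref{thm:algo-unconstrained}, the only structural difference being how the algorithm treats the faces of the feasible box $[0,1]^d$. Throughout its execution GFPT maintains a \emph{trapping certificate}: an axis-aligned sub-box $R\subseteq[0,1]^d$ together with a level $\ell\in\R$ such that (i) every point already queried on the part of $\partial R$ that lies in the interior of $[0,1]^d$ (the \emph{free boundary} of $R$) has $f$-value at least $\ell$, and these points form a grid fine enough — relative to $L$ and $\eps$ — that $L$-Lipschitz-continuity of $\nabla f$ propagates $f\ge\ell-\gamma$ to all of the free boundary, with a controlled slack $\gamma$; and (ii) there is an already-queried point $\hat x\in\mathrm{int}\,R$ with $f(\hat x)<\ell-\gamma$. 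On faces of $R$ contained in $\partial[0,1]^d$ no such condition is imposed: since the algorithm only ever queries feasible points and never ``crosses'' these faces, they play the role of the constrained boundary. The initial certificate is obtained trivially for $R=[0,1]^d$, whose free boundary is empty. The proof then splits into \textbf{correctness} (a valid certificate over a sufficiently small box forces the algorithm's output to be an $\eps$-KKT point) and the \textbf{query bound}.

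For correctness, the key point is that a valid certificate forces $f$ to attain its minimum over $\bar R$ either in $\mathrm{int}\,R$ or on $\bar R\cap\partial[0,1]^d$: it cannot be attained on the relative interior of the free boundary, since there $f\ge\ell-\gamma>f(\hat x)$. A minimizer in $\mathrm{int}\,R$ is an exact stationary point of $f$, hence an exact KKT point (as $\mathrm{int}\,R\subseteq\mathrm{int}[0,1]^d$); a minimizer on $\bar R\cap\partial[0,1]^d$ is an exact KKT point of $\min_{[0,1]^d}f$, because along the face it is a local minimum and into the box it cannot decrease, which is exactly the KKT condition for that face. Thus $\bar R$ always contains an exact KKT point, and once $\diam(R)$ has dropped below the threshold dictated by $L$ and $\eps$, $L$-Lipschitz-continuity of $\nabla f$ forces the KKT residual (the norm of the projection of $\nabla f$ onto the feasible directions) to be at most $\eps$ at \emph{every} point of $R$ — in particular at the center of $R$, which GFPT outputs.

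One iteration of GFPT is a single \emph{parallel trap} step. It picks the current ``largest'' coordinate direction, slices $R$ into $\mathrm{poly}(d)$ thin slabs along that direction, queries $f$ on a grid of resolution $\Theta(\sqrt{\eps/L})$ (up to a $\mathrm{poly}(d)$ refinement) on each separating hyperplane, re-grids the affected free faces, and then argues — via a pigeonhole/averaging argument over the queried values together with $L$-smoothness, which is exactly where the gradient-flow structure $\tfrac{d}{dt}f(x(t))=-\norm{\nabla f(x(t))}^2$ of \citet{BubeckM20-trap-gradient} is exploited to keep the certificate robust at this coarse resolution — that one of two things happens: either some queried point is certified to be an $\eps$-KKT point (its gradient is pinned down by the values of neighbouring grid points), in which case we output it; or the certificate can be re-established on one of the thin slabs, which becomes the new $R$. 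Cycling through all $d$ coordinates and iterating drives $\diam(R)$ geometrically down to the termination threshold. That this single improved subroutine suffices, with no separate ``edge-fixing'' phase as in the GFT algorithm, is the chief simplification of GFPT, and establishing it is the main obstacle: one must show that a slicing step simultaneously preserves the free-boundary certificate on \emph{all} faces of the chosen slab (including the newly created ones), makes geometric progress, and keeps every query feasible.

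Finally, for the query bound: each parallel trap step costs $\mathrm{poly}(d)$ times the number of grid points it places on the separating hyperplanes, i.e.\ $\mathrm{poly}(d)\cdot\bigl(s/\Theta(\sqrt{\eps/L})\bigr)^{d-1}$, where $s$ is the current largest side length of $R$; cycling through the $d$ coordinate directions and summing the resulting geometric series over the $O(d\log(L/\eps))$ rounds needed to bring $\diam(R)$ below the termination threshold (the series sums to $O(1)$ for $d\ge2$) collapses to $O(d)^{\frac{5d-1}{4}}\,(\sqrt{L/\eps})^{d-1}$ once all constants, logarithmic factors, the $\mathrm{poly}(d)$ slab count, and the $\mathrm{poly}(d)$ grid refinement needed to control the $d$-dimensional slack $\gamma$ are collected. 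The specializations to $d=2$ and $d=3$ then follow by substitution. The delicate part of this last step is making the dimension dependence exactly this exponent rather than a crude $d^{O(d)}$, which requires carefully tracking how the grid refinement and the slab count propagate through the recursion.
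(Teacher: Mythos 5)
Your overall strategy -- maintain a shrinking trapping box $R$ together with a certificate that the \emph{free boundary} (facets of $R$ not contained in $\partial[0,1]^d$) cannot be crossed by a descending trajectory, and conclude that $\overline{R}$ must always contain a KKT point -- is indeed the paper's strategy. Your compactness argument (a minimizer of $f$ over $\overline{R}$ must lie either in $\mathrm{int}\,R$ or on the constrained part of $\partial R$, and either way is an exact KKT point of $[0,1]^d$) is a legitimate alternative to the paper's projected-gradient-flow argument, though to be airtight you should note that $f \ge \ell-\gamma$ extends by continuity to the \emph{closure} of the free boundary; otherwise a minimizer could in principle sit on a lower-dimensional face where a free facet and a constrained facet meet, and then ``along the face it is a local minimum'' does not by itself give the KKT conditions there.

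The genuine gap is in the extraction step. You claim that once $\operatorname{diam}(R)$ is small, ``$L$-Lipschitz-continuity of $\nabla f$ forces the KKT residual to be at most $\eps$ at every point of $R$ --- in particular at the center of $R$, which GFPT outputs.'' This is false, because the projected gradient $g$ is \emph{discontinuous} across $\partial[0,1]^d$, so Lipschitzness of $\nabla f$ does not propagate to $\|g\|_2$. Concretely, let $f(x) = \tfrac12 x_1$ on $[0,1]^d$ (so $\nabla f$ is constant, hence $L$-Lipschitz for every $L>0$), and let $R = [0,\delta]^d$ for arbitrarily small $\delta>0$. Then $\overline{R}$ contains the exact KKT point $(0,\dots,0)$, but the center $c$ of $R$ lies in $\mathrm{int}[0,1]^d$, so $g(c) = \nabla f(c) = (1/2,0,\dots,0)$ and $\|g(c)\|_2 = 1/2$, independently of $\delta$. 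Outputting the center therefore fails. The paper instead checks the $2^d$ corners of $R_T$ and outputs one with $\|g\|_2 \le \eps$; the argument that such a corner exists uses Lipschitzness of $\nabla f$ (not of $g$) together with the specific structure of $g$ at a corner of the relevant face, and it is a real step, not a routine Lipschitz-propagation. A second, smaller concern: your trapping certificate is a flat level threshold ($f \ge \ell - \gamma$ on the free boundary), which is strictly stronger than the paper's distance-aware condition $f(y) > f(x_t) - \eps_t\|x_t - y\|_2$; the distance-aware version is what makes the $\delta$-net argument close with the stated grid resolution (which scales with the current box size $r_t$ and shrinks geometrically, not simply as $\sqrt{\eps/L}$), and it is not clear the flat-threshold invariant can be maintained when $x_t$ is moved to a lower-value point $z^*$ on a separating hyperplane.
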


In the next section we present a high-level overview of the algorithm. Then, we proceed with the formal presentation and proof for the unconstrained setting. Finally, we briefly mention how the algorithm can be adapted to the constrained setting.

\subsection{Overview of the GFPT algorithm}

For this overview we consider the 2-dimensional unconstrained setting. In other words, we assume that we have query access to a function $f: \mathbb{R}^2 \to [0, \infty)$, which has $L$-Lipschitz-continuous gradient $\nabla f$. Recall that our goal is to find an $\eps$-stationary point, i.e., a point $x \in \mathbb{R}^d$ such that $\|\nabla f(x)\|_2 \leq \eps$.

\paragraph{Gradient flow.}
The notion of the \emph{gradient flow} is very useful in order to understand the intuition behind the algorithm. Intuitively, the gradient flow is a continuous path that corresponds to the points that gradient descent would visit if it had an infinitesimally small step size. More formally, the gradient flow starting at $x$ is the path $\gamma(t)$ that is the solution of the differential equation $\gamma'(t) = -\nabla f(\gamma(t))$ with initial condition $\gamma(0) = x$.

\paragraph{Initialization.}
Let $x_0 \in \mathbb{R}^d$ be some starting point. If $f(x_0) = 0$, then $x_0$ is a global minimum of the function and thus necessarily a stationary point, so we assume that $f(x_0) > 0$.

Consider the rectangle $R_0 = \{x \in \mathbb{R}^2: \|x-x_0\|_\infty \leq 2f(x_0)/\eps\}$. We claim that $R_0$ must contain an $\eps$-stationary point of $f$. Indeed, assume that $R_0$ does not contain any $\eps$-stationary points and consider the gradient flow starting at $x_0$. Then, as long as the gradient flow has not left $R_0$, the value of $f$ along the gradient flow must decrease by a rate at least $\eps$: if the gradient flow has traveled a distance $\ell$, then the function value has decreased by at least $\eps \cdot \ell$. In order to reach the boundary of $R_0$, the gradient flow must travel a distance at least $2f(x_0)/\eps$ from $x_0$. But this means that, when the gradient flow reaches $\partial R_0$, the function value will be at most $f(x_0) - \eps \cdot 2f(x_0)/\eps = -f(x_0) < 0$, which is impossible! As a result, it follows that $R_0$ must in fact contain an $\eps$-stationary point. See Figure~\ref{fig:gradient-flow-R0} for an illustration of this argument.

\begin{figure}
	\centering
	\begin{minipage}{.45\textwidth}
		\centering
\scalebox{0.6}{
\includegraphics{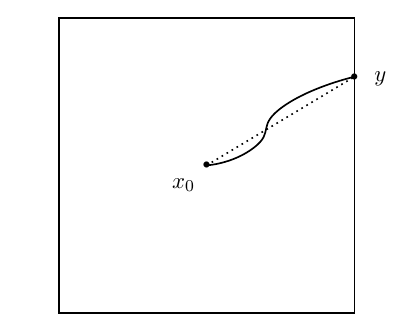}
}
\captionof{figure}{An illustration of the rectangle $R_0$ and the gradient flow starting at $x_0$.
}\label{fig:gradient-flow-R0}
	\end{minipage}\hfill
	\begin{minipage}{.45\textwidth}
		\centering
\scalebox{0.6}{
\includegraphics{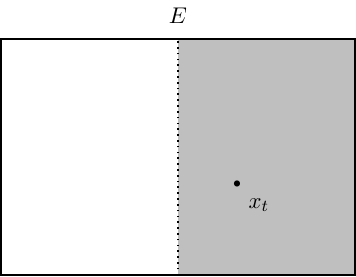}
}
\captionof{figure}{A simple gradient flow trap. The gray region is the new smaller rectangle $R_{t+1}$.}\label{fig:simple-gradient-flow-trap}
	\end{minipage}
\end{figure}
















\paragraph{Invariant.}
The initialization step ensures that $R_0$ contains an $\eps$-stationary point. However, $R_0$ is too large for us to locate such an $\eps$-stationary point by brute force. Thus, the algorithm will seek to decrease the size of the rectangle at each iteration, while maintaining the invariant that it must contain a solution. After sufficiently many iterations, the rectangle will be small enough, so that the $\eps$-stationary point can easily be found.

More formally, at each iteration $t$, the algorithm will maintain a rectangle $R_t$ and a point $x_t \in R_t$, such that $R_{t+1}$ is smaller than $R_t$ by a constant fraction. The goal of the algorithm will be to maintain the following invariant: all points on the boundary of $R_t$ are $\eps$-unreachable from $x_t$. We say that a point $y$ is $\eps$-unreachable from a point $x$ if
$$f(y) > f(x) - \eps \|x-y\|_2.$$
The intuition for this definition is the following: if $y$ is $\eps$-unreachable from $x$, then the gradient flow starting at $x$ cannot reach $y$, unless it encounters an $\eps$-stationary point on the way. Thus, if $(R_t,x_t)$ satisfies the invariant, then $R_t$ must necessarily contain an $\eps$-stationary point. As explained above, $(R_0,x_0)$ satisfies the invariant, so the algorithm now has to find a way to decrease the size of $R_t$ at each step while maintaining the invariant.

\paragraph{Simple gradient flow trap.}
Let us first consider a simple attempt at decreasing the size of $R_t$ which unfortunately fails. Let $E$ be the segment that cuts $R_t$ in half (along its longest side). Furthermore, assume that we can somehow determine that all points $y \in E$ are $\eps$-unreachable from $x_t$. In that case, we could simply set $x_{t+1} := x_t$ and let $R_{t+1}$ be the half of $R_t$ which contains $x$. It is easy to see that the invariant would be satisfied by $(R_{t+1},x_{t+1})$. See Figure~\ref{fig:simple-gradient-flow-trap} for an illustration.

There are multiple issues with this simple approach. One problem is that we have not said what happens when there exists some point $z \in E$ that is \emph{not} $\eps$-unreachable from $x_t$. A crucial observation, which we will also use later, is that in that case, all points $y \in \partial R_t$ are $\eps$-unreachable from $z$. Indeed, we can combine the inequalities $f(z) \leq f(x_t) - \eps \|x_t-z\|_2$ and $f(y) > f(x_t) - \eps \|x_t-y\|_2$ to obtain
$$f(y) > f(x_t) - \eps \|x_t-y\|_2 \geq f(z) + \eps \|x_t-z\|_2 - \eps \|x_t-y\|_2 \geq f(z) - \eps \|z-y\|_2$$
i.e., $y$ is $\eps$-unreachable from $z$. This means that the update $x_{t+1} := z$ and $R_{t+1} := R_t$ would maintain the invariant. Unfortunately, this update would not decrease the size of the rectangle...

A more fundamental issue is: how can we determine whether all points $y \in E$ are $\eps$-unreachable from $x_t$? This would require an infinite number of queries to $f$... Instead, we pick a sufficiently fine $\delta$-discretization $S$ of $E$ and only query $f$ on the points in $S$. What can we say now, if we find that all points in $S$ are $\eps$-unreachable from $x_t$? It turns out that as long as $x_t$ is sufficiently far away from $E$, it follows that all points in $E$ are $\eps'$-unreachable from $x_t$, for some $\eps' > \eps$. As a result, our algorithm will have to allow for $\eps$ to increase (hopefully, only by a small amount!) from one iteration to the next one. In other words, we will also maintain a value $\eps_t$ at each iteration and the invariant will now be: $(R_t,x_t,\eps_t)$ satisfies the invariant, if all points on the boundary of $R_t$ are $\eps_t$-unreachable from $x_t$. We will aim to ensure that $\eps_t$ does not increase too much, namely by at most a constant factor in \emph{total} over all iterations. For this, we will have to pick $\delta$ to be sufficiently large, but also ensure that $x_t$ is sufficiently far away from $E$. Unfortunately, this cannot be guaranteed using this simple gradient flow trap, which is why a parallel trap is needed, as already noted by \citet{BubeckM20-trap-gradient}. In fact, the parallel will also allow us to take care of the first issue identified above.

\paragraph{The parallel trap.} For concreteness, assume that the longest side of $R_t$ is along the first coordinate, and that it has length $r$. Instead of considering a single segment $E$ which cuts $R_t$ in half, we consider segments $E_1$ and $E_2$ that cut $R_t$ into three equal parts.

We pick sufficiently fine $\delta$-discretizations $S_1$ and $S_2$ of $E_1$ and $E_2$ respectively. If all points on $S_1$ and $S_2$ are $\eps_t$-unreachable from $x_t$, then it is easy to see that we can remove a third of $R_t$ while maintaining the invariant without changing $x_t$. If $x_t$ lies in the right half of $R_t$, then we remove everything on the left of $E_1$. Otherwise, we remove everything on the right of $E_2$. In both cases we let $x_{t+1} := x_t$. See Figure~\ref{fig:parallel-trap-x} for an illustration. Note that in both cases $x_t$ cannot be arbitrarily close to the new boundary of $R_{t+1}$: the distance is always at least $r/6$. This is important to ensure that we can control how much larger $\eps_{t+1}$ is than $\eps_t$.

\begin{figure}
\centering
\scalebox{0.6}{
\includegraphics{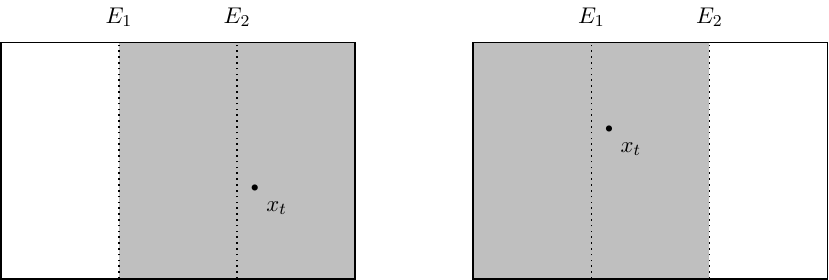}
}









\caption{The two possible cases when all points on $S_1$ and $S_2$ are $\eps$-unreachable from $x_t$. The gray region represents the new smaller rectangle $R_{t+1}$ in both cases.}\label{fig:parallel-trap-x}
\end{figure}

If, on the other hand, there exist points $z$ on $S_1$ or $S_2$ that are not $\eps_t$-unreachable from $x$, then let $z^*$ denote such a point with minimal value $f(z^*)$. For concreteness, assume that $z^*$ lies on $S_1$. By the crucial observation presented in the simple gradient flow trap attempt above, we know that all points on $\partial R_t$ are $\eps_t$-unreachable from $z^*$. Thus, we will set $x_{t+1} := z^*$, but we will also let $R_{t+1}$ be $R_t$ with everything on the right of $E_2$ removed. See Figure~\ref{fig:parallel-trap-z} for an illustration. This will ensure that the size of the rectangle decreases, but we still need to argue that this maintains the invariant. We will show that all points on $S_2$ are $\eps_t$-unreachable from $z^*$. Then, since $z^*$ is sufficiently far away from $E_2$, we will again be able to argue that $\eps_{t+1}$ is not much larger than $\eps_t$. First, consider points in $S_2$ that are $\eps_t$-unreachable from $x_t$. By the crucial observation, these points are also $\eps_t$-unreachable from $z^*$. It remains to consider points $z \in S_2$ that are not $\eps_t$-unreachable from $x_t$. But by construction of $z^*$ we have $f(z^*) \leq f(z)$ for all these points $z$. Together with the fact that $\|z^*-z\|_2 > 0$ (since $z^* \in E_1$ and $z \in E_2$) this implies
$$f(z) \geq f(z^*) > f(z^*) - \eps_t \|z^*-z\|_2$$
i.e., $z$ is $\eps_t$-unreachable from $z^*$.

\begin{figure}
\centering
\scalebox{0.6}{
\includegraphics{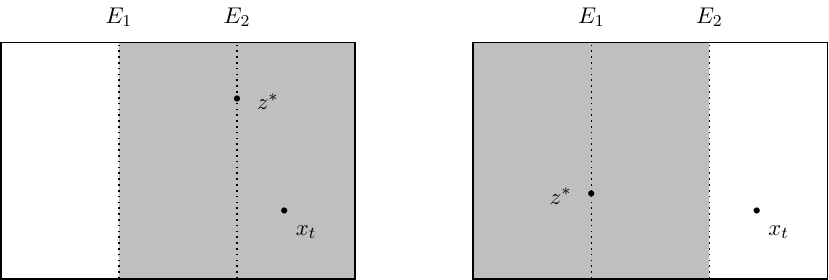}
}











\caption{The two possible cases when there exist points on $S_1$ and $S_2$ that are not $\eps$-unreachable from $x_t$. The gray region represents the new smaller rectangle $R_{t+1}$ in both cases.}\label{fig:parallel-trap-z}
\end{figure}

\paragraph{Parameters.} In the formal analysis of the algorithm we show that it is possible to pick the size of the $\delta$-nets in each step so as to balance out the following two objectives: (i) $\delta$ is small enough so that, in total over all iterations, $\eps$ only increases by a constant factor, and (ii) $\delta$ is large enough, so that the total number of queries remains low, namely $O(\sqrt{L f(x_0)}/\eps)$ in the two-dimensional case.

\subsection{Formal presentation of GFPT}\label{sec:algo-unconstrained-proof}

Let $d \geq 2$ and let $f: \mathbb{R}^d \to [0,\infty)$ be such that $\nabla f$ is $L$-Lipschitz-continuous. We begin with some definitions and technical lemmas.

\begin{definition}
For $x,y \in \mathbb{R}^d$ and $\eps > 0$ we say that \emph{$y$ is $\eps$-unreachable from $x$} if the following holds:
\[ f(y) > f(x) - \eps \|x-y\|_2. \]
\end{definition}

Intuitively, $y$ is $\eps$-unreachable from $x$ if the value $f(y)$ is too high with respect to $f(x)$ so that $y$ cannot be reached by following the ``gradient flow'' starting at $x$ without encountering an $\eps$-stationary point along the way. We now formalize this intuition.

\begin{definition}
A $k$-dimensional hyperrectangle $R$ in $\mathbb{R}^d$ is a set of the form $R = [a_1,b_1] \times \dots \times [a_d,b_d]$, where $a_i \leq b_i$ for all $i \in [d]$, and $|\{i \in [d] : a_i < b_i\}| = k$. When $k = d$, we also say that $R$ is full-dimensional.
\end{definition}

The following lemma is essentially a more refined version of a corresponding result in \citep{BubeckM20-trap-gradient} and uses the same proof technique.

\begin{lemma}\label{lem:eps-unreachable}
Let $R$ be a full-dimensional hyperrectangle in $\mathbb{R}^d$ and $x$ a point in $R$. For any $\eps > 0$, if all $y \in \partial R$ are $\eps$-unreachable from $x$, then $R$ contains an $\eps$-stationary point of $f$.
\end{lemma}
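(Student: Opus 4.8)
## Proof Proposal for Lemma \ref{lem:eps-unreachable}

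The plan is to argue by contradiction using the gradient flow, making rigorous the informal ``gradient flow trap'' intuition described in the overview. Suppose $R$ contains no $\eps$-stationary point of $f$; that is, $\|\nabla f(z)\|_2 > \eps$ for every $z \in R$. Consider the gradient flow $\gamma$ starting at $x$, i.e.\ the solution of $\gamma'(t) = -\nabla f(\gamma(t))$ with $\gamma(0) = x$. Since $\nabla f$ is $L$-Lipschitz, this solution exists and is unique for as long as it stays in a bounded region; in particular, it is well-defined up to the first time $T$ at which $\gamma(T) \in \partial R$ (this time is finite — see below). Along the flow, for $t \in [0,T]$ we have $\frac{d}{dt} f(\gamma(t)) = \langle \nabla f(\gamma(t)), \gamma'(t)\rangle = -\|\nabla f(\gamma(t))\|_2^2 < -\eps^2 < 0$, so $f$ is strictly decreasing along $\gamma$; since $R$ is compact and $f$ is continuous (hence bounded on $R$) and $f(\gamma(t))$ drops at rate at least $\eps^2$, the flow must exit $R$ at some finite time $T$.

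The key quantitative step is to compare the drop in $f$ with the arc length travelled. For $t \in [0,T]$, the speed is $\|\gamma'(t)\|_2 = \|\nabla f(\gamma(t))\|_2$, so the arc length from $0$ to $t$ is $\ell(t) = \int_0^t \|\nabla f(\gamma(s))\|_2\, ds$. Meanwhile
\[
  f(x) - f(\gamma(t)) = \int_0^t \|\nabla f(\gamma(s))\|_2^2\, ds \;\geq\; \eps \int_0^t \|\nabla f(\gamma(s))\|_2\, ds \;=\; \eps\, \ell(t),
\]
using $\|\nabla f(\gamma(s))\|_2 > \eps$ on $R$. Since the straight-line distance is at most the arc length, $\|x - \gamma(t)\|_2 \leq \ell(t)$, and therefore
\[
  f(\gamma(t)) \;\leq\; f(x) - \eps\, \ell(t) \;\leq\; f(x) - \eps\, \|x - \gamma(t)\|_2
\]
for all $t \in [0,T]$. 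Taking $t = T$ and setting $y := \gamma(T) \in \partial R$, we obtain $f(y) \leq f(x) - \eps \|x-y\|_2$, which directly contradicts the hypothesis that $y$ is $\eps$-unreachable from $x$ (i.e.\ $f(y) > f(x) - \eps\|x-y\|_2$). Hence $R$ must contain an $\eps$-stationary point.

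The main obstacle is purely technical rather than conceptual: one must justify that the gradient flow is well-defined and actually reaches $\partial R$ in finite time. This follows because $\nabla f$ is locally Lipschitz (globally $L$-Lipschitz), giving local existence and uniqueness by Picard--Lindelöf, and the flow cannot blow up or stall inside the compact set $R$ since $f\circ\gamma$ decreases at rate at least $\eps^2 > 0$ while $f$ is bounded on $R$; so either the flow leaves $R$ in finite time, or it would force $f$ to take arbitrarily negative values on $R$, which is impossible. One small subtlety worth noting is the case $x \in \partial R$ itself: then we may take $y = x$, for which $f(y) = f(x) > f(x) - \eps\|x-y\|_2 = f(x)$ is false, so in fact $x \in \partial R$ already contradicts the hypothesis (the hypothesis forces $x$ to be in the interior), and the argument above applies to the interior point $x$. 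A cleaner alternative that avoids all ODE technicalities is to discretize: run gradient descent with step size $1/L$ from $x$; standard smoothness gives $f(x_{k+1}) \leq f(x_k) - \frac{1}{2L}\|\nabla f(x_k)\|_2^2$ and $\|x_{k+1}-x_k\|_2 = \frac{1}{L}\|\nabla f(x_k)\|_2$, so as long as the iterates stay in $R$ with gradient norm $> \eps$, telescoping yields $f(x_k) \leq f(x) - \frac{\eps}{2}\sum_{j<k}\|x_{j+1}-x_j\|_2 \leq f(x) - \frac{\eps}{2}\|x-x_k\|_2$; since the decrease is bounded below by $\frac{\eps^2}{2L}$ per step and $f$ is bounded on $R$, some iterate $x_k$ must leave $R$, and a continuity/crossing argument on the last segment produces the desired boundary point $y$ with $f(y) \leq f(x) - \eps\|x-y\|_2$. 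I would present the gradient-flow version as the main argument for its transparency, relegating the existence details to a sentence.
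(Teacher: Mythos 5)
Your proof is correct and follows essentially the same route as the paper: it sets up the gradient flow $\gamma'(t) = -\nabla f(\gamma(t))$ from $x$, integrates $\frac{d}{dt} f(\gamma(t)) = -\|\nabla f(\gamma(t))\|_2^2$, compares the function decrease with the arc length to get $f(\gamma(t)) \leq f(x) - \eps\|x - \gamma(t)\|_2$, and derives a contradiction with $\eps$-unreachability at the first boundary-hitting time $T$. The only additions beyond the paper's proof are your (correct) observation that the hypothesis forces $x$ into the interior of $R$, and the alternative discretized gradient-descent argument, which the paper does not give but which would also work.
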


\begin{proof}
The gradient flow for $f$ starting at $x$ is defined as the solution $\gamma(t)$ to the differential equation
$$\gamma'(t) = - \nabla f(\gamma(t))$$
with the initial condition $\gamma(0) = x$. The gradient flow is a curve that starts at $x$ and follows the direction of steepest descent as given by the negative of the gradient $\nabla f$. Since $\nabla f$ is Lipschitz-continuous, the existence and uniqueness of a solution $\gamma: [0,\infty) \to \mathbb{R}^d$ is guaranteed by standard tools from the theory of differential equations.

Define $\psi: t \mapsto f(\gamma(t))$ and note that $\psi$ is continuously differentiable with derivative $\psi'(t) = \langle \nabla f(\gamma(t)), \gamma'(t) \rangle$. For any $T \geq 0$ we can write
\begin{equation*}
\begin{split}
f(\gamma(T)) - f(\gamma(0)) = \psi(T) - \psi(0) = \int_0^T \psi'(t) dt &= \int_0^T \langle \nabla f(\gamma(t)), \gamma'(t) \rangle dt\\
&= \int_0^T \langle \nabla f(\gamma(t)), - \nabla f(\gamma(t)) \rangle dt\\
&= - \int_0^T \| \nabla f(\gamma(t)) \|_2^2 dt.
\end{split}
\end{equation*}
Assume towards a contradiction that the hyperrectangle $R$ does not contain any $\eps$-stationary points of $f$. In other words, $\| \nabla f(z) \|_2 > \eps$ for all $z \in R$. Now consider any $T \geq 0$ such that $\gamma(t) \in R$ for all $t \in [0,T]$. Then we have that
$$f(\gamma(T)) - f(\gamma(0)) = - \int_0^T \| \nabla f(\gamma(t)) \|_2^2 dt \leq - T \eps^2.$$
Since $f$ is continuous and $R$ is compact, $f$ is bounded on $R$ and thus there exists a maximal $T$ such that $\gamma(t) \in R$ for all $t \in [0,T]$. In particular, $y := \gamma(T)$ lies on the boundary $\partial R$. Furthermore, since the length of the curve can be computed as $\int_0^T \| \gamma'(t) \|_2 dt$, we deduce that $\int_0^T \| \gamma'(t) \|_2 dt \geq \|\gamma(0)-\gamma(T)\|_2 = \|x-y\|_2$. Finally, we obtain
\begin{equation*}
\begin{split}
f(y) - f(x) = f(\gamma(T)) - f(\gamma(0)) = - \int_0^T \| \nabla f(\gamma(t)) \|_2^2 dt &\leq - \eps \int_0^T \| \nabla f(\gamma(t)) \|_2 dt\\
&= - \eps \int_0^T \| \gamma'(t) \|_2 dt\\
&\leq - \eps \|x-y\|_2.
\end{split}
\end{equation*}
But this means that $y \in \partial R$ is not $\eps$-unreachable from $x$, a contradiction. As a result, $R$ must necessarily contain some $\eps$-stationary point.
\end{proof}

\begin{corollary}\label{cor:solution-unconstrained}
Let $R = [a_1,b_1] \times \dots \times [a_d,b_d]$ be a full-dimensional hyperrectangle in $\mathbb{R}^d$ and $x$ a point in $R$. For any $\eps > 0$, if all $y \in \partial R$ are $(\eps/2)$-unreachable from $x$, and $\max_i (b_i-a_i) \leq \frac{\eps}{2\sqrt{d}L}$, then $x$ is an $\eps$-stationary point of $f$.
\end{corollary}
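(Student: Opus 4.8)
The plan is to bootstrap Lemma~\ref{lem:eps-unreachable} with the parameter $\eps/2$ and then use Lipschitz-continuity of $\nabla f$ to upgrade the resulting $(\eps/2)$-stationary point in $R$ to the conclusion that $x$ itself is $\eps$-stationary. Concretely, since by hypothesis every $y \in \partial R$ is $(\eps/2)$-unreachable from $x$, Lemma~\ref{lem:eps-unreachable} (applied with $\eps/2$ in place of $\eps$) immediately yields a point $z \in R$ with $\|\nabla f(z)\|_2 \le \eps/2$.

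The remaining step is a diameter estimate. Both $x$ and $z$ lie in $R = [a_1,b_1]\times\dots\times[a_d,b_d]$, so $\|x - z\|_2 \le \sqrt{\sum_{i=1}^d (b_i - a_i)^2} \le \sqrt{d}\cdot \max_i (b_i - a_i) \le \sqrt{d}\cdot \frac{\eps}{2\sqrt{d}L} = \frac{\eps}{2L}$. Using that $\nabla f$ is $L$-Lipschitz-continuous, this gives $\|\nabla f(x) - \nabla f(z)\|_2 \le L\,\|x-z\|_2 \le \eps/2$. Finally, by the triangle inequality,
\[
\|\nabla f(x)\|_2 \le \|\nabla f(z)\|_2 + \|\nabla f(x) - \nabla f(z)\|_2 \le \frac{\eps}{2} + \frac{\eps}{2} = \eps,
\]
so $x$ is an $\eps$-stationary point of $f$, as claimed.

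There is no real obstacle here: the statement is a clean corollary, and the only thing to be slightly careful about is the conversion from the coordinatewise side-length bound $\max_i(b_i-a_i)$ to the Euclidean diameter bound, where the factor $\sqrt{d}$ enters — which is exactly why the hypothesis is stated with $\frac{\eps}{2\sqrt{d}L}$ rather than $\frac{\eps}{2L}$. I would present the argument in the two short steps above without further elaboration.
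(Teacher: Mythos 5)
Your proof is correct and follows essentially the same route as the paper: invoke Lemma~\ref{lem:eps-unreachable} at level $\eps/2$, bound $\|x-z\|_2 \le \sqrt{d}\max_i(b_i-a_i)$, and apply $L$-Lipschitzness of $\nabla f$ with the triangle inequality. The only cosmetic difference is that the paper routes the diameter bound through $\|x-y\|_\infty$, while you use the Euclidean diagonal directly; both yield the same $\sqrt{d}\max_i(b_i-a_i)$ factor.
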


\begin{proof}
By Lemma~\ref{lem:eps-unreachable}, $R$ contains an $\eps/2$-stationary point of $f$, i.e., there exists $y \in R$ with $\|\nabla f(y)\|_2 \leq \eps/2$. Since the gradient of $f$ is $L$-Lipschitz-continuous, we have that
$$\|\nabla f(x) - \nabla f(y)\|_2 \leq L \|x - y\|_2 \leq L \sqrt{d} \|x-y\|_\infty \leq L \sqrt{d} \max_i (b_i-a_i) \leq \eps/2$$
and thus $\|\nabla f(x)\|_2 \leq \eps$.
\end{proof}

The algorithm will make use of $\delta$-nets with some nice properties.

\begin{definition}[nice $\delta$-net]
Let $\delta > 0$ and let $R$ be a $k$-dimensional hyperrectangle in $\mathbb{R}^d$. A set of points $S \subseteq R$ is a \emph{nice $\delta$-net} of $R$ if for any face $F$ of $R$ it holds that $S \cap F$ is a $\delta$-net of $F$, i.e., for all $y \in F$ there exists $z \in S \cap F$ with $\|z-y\|_2 \leq \delta$.
\end{definition}

It is not hard to construct nice $\delta$-nets of reasonable size. We include the following construction for completeness.

\begin{lemma}\label{lem:nice-net}
Let $R = [a_1,b_1] \times \dots \times [a_d,b_d]$ be a $k$-dimensional hyperrectangle in $\mathbb{R}^d$. Then, for any $\delta > 0$, there exists a nice $\delta$-net $S$ of $R$ with $|S| = \prod_{i=1}^d (\lceil \sqrt{k}(b_i-a_i)/2\delta \rceil + 1)$. In particular, if $R$ is $(d-1)$-dimensional, then we have $|S| \leq (\sqrt{d}r/2\delta)^{d-1}$ for any $r$ satisfying $r \geq \max_i (b_i-a_i)$ and $r \geq 8 \sqrt{d} \delta$.
\end{lemma}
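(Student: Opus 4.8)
The plan is to exhibit $S$ explicitly as a Cartesian product of one‑dimensional uniform grids, one grid per coordinate, and then check the two claims (exact size, and nice‑$\delta$‑net property) by direct computation. Concretely, for each coordinate $i \in [d]$ I would set $m_i := \lceil \sqrt{k}(b_i-a_i)/2\delta \rceil$ and let $G_i := \{a_i\}$ when $a_i = b_i$ (so $m_i = 0$), and otherwise $G_i := \{\, a_i + j(b_i-a_i)/m_i : j = 0,1,\dots,m_i \,\}$, which is a set of $m_i+1$ equally spaced points with consecutive spacing $(b_i-a_i)/m_i$ and with $m_i \geq 1$ since $\sqrt{k}(b_i-a_i)/2\delta > 0$. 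Then $S := G_1 \times \dots \times G_d$, and $|S| = \prod_{i=1}^d |G_i| = \prod_{i=1}^d (m_i+1)$, which is exactly $\prod_{i=1}^d (\lceil \sqrt{k}(b_i-a_i)/2\delta \rceil + 1)$ since the degenerate coordinates contribute a factor $\lceil 0\rceil + 1 = 1$. The one spacing fact I will repeatedly use is that, for every non‑degenerate coordinate, the half‑spacing satisfies $(b_i-a_i)/2m_i \leq \delta/\sqrt{k}$, which is immediate from $m_i \geq \sqrt{k}(b_i-a_i)/2\delta$. (If $k = 0$ then $R$ is a point and the statement is trivial, so I may assume $k \geq 1$.)

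Next I would verify the nice‑net property. Every face $F$ of $R$ has the form $F = F_1 \times \dots \times F_d$ with each $F_i$ equal either to the full interval $[a_i,b_i]$ or to a single endpoint $\{a_i\}$ or $\{b_i\}$, and the set of "kept" indices (those with $F_i = [a_i,b_i]$ and $a_i < b_i$) has size $\dim F \leq k$. Since $a_i,b_i \in G_i$ for every $i$, we get $G_i \cap F_i = G_i$ when $F_i$ is the kept full interval and $G_i \cap F_i = F_i$ (a single point) otherwise, hence $S \cap F = \prod_i (G_i \cap F_i)$ is nonempty and is precisely the product grid on $F$. Given $y \in F$, in each non‑kept coordinate $y_i$ already lies in $G_i \cap F_i$, while in each of the $\dim F$ kept coordinates I can pick $z_i \in G_i$ with $|y_i - z_i| \leq (b_i-a_i)/2m_i \leq \delta/\sqrt{k}$; the resulting $z \in S \cap F$ then satisfies $\|y-z\|_2^2 = \sum_{i\ \text{kept}} (y_i-z_i)^2 \leq \dim F \cdot \delta^2/k \leq \delta^2$, so $\|y-z\|_2 \leq \delta$. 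Since $F$ was arbitrary, $S$ is a nice $\delta$‑net of $R$.

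For the "in particular" statement I would specialize to $k = d-1$, so exactly one coordinate is degenerate and contributes a factor $1$ to $|S|$, while each of the remaining $d-1$ coordinates contributes $\lceil \sqrt{d-1}(b_i-a_i)/2\delta \rceil + 1 \leq \sqrt{d-1}(b_i-a_i)/2\delta + 2 \leq \sqrt{d-1}\,r/2\delta + 2$, using $b_i - a_i \leq r$. It then suffices to show $\sqrt{d-1}\,r/2\delta + 2 \leq \sqrt{d}\,r/2\delta$, i.e.\ $2 \leq (\sqrt{d}-\sqrt{d-1})\,r/2\delta$; since $\sqrt{d}-\sqrt{d-1} = 1/(\sqrt{d}+\sqrt{d-1}) \geq 1/(2\sqrt{d})$, the right‑hand side is at least $r/(4\sqrt{d}\,\delta) \geq 2$ by the hypothesis $r \geq 8\sqrt{d}\,\delta$. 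Thus each nontrivial factor is at most $\sqrt{d}\,r/2\delta$, and multiplying in the trivial factor gives $|S| \leq (\sqrt{d}\,r/2\delta)^{d-1}$.

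I do not expect a genuine obstacle here; the proof is a routine explicit construction. The only two points that need a little care are (i) noting that any face of $R$ has at most $k$ non‑degenerate directions, which is exactly what keeps the $\ell_2$ discretization error of the product grid bounded by $\delta$ rather than by $\sqrt{d}\,\delta$, and (ii) combining the two hypotheses on $r$ with the elementary bound $\sqrt{d} - \sqrt{d-1} \geq 1/(2\sqrt{d})$ to absorb both the ceiling and the "$+1$" into the clean bound $\sqrt{d}\,r/2\delta$.
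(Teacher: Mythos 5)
Your proof is correct and follows essentially the same route as the paper's: both build $S$ as a per-coordinate uniform grid with $\lceil \sqrt{k}(b_i-a_i)/2\delta \rceil + 1$ points (including both endpoints), both bound the $\ell_2$ error by $\delta$ using that at most $k$ coordinates can disagree, and both absorb the ceiling and the $+1$ using the hypotheses on $r$ together with the rationalization $\sqrt{d}-\sqrt{d-1} = 1/(\sqrt{d}+\sqrt{d-1})$. The only differences are cosmetic (you phrase the face argument via the product decomposition $F = F_1 \times \dots \times F_d$ and use the lower bound $\sqrt{d}-\sqrt{d-1}\geq 1/(2\sqrt d)$, while the paper argues that the coordinatewise-nearest point automatically lies in any face containing $y$ and rearranges the final inequality slightly differently).
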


\begin{proof}
For each $i \in [d]$, we construct a set $S_i \subseteq [a_i,b_i]$ as follows. If $a_i = b_i$, we let $S_i := \{a_i\}$. Otherwise, namely when $a_i < b_i$, we partition the interval $[a_i,b_i]$ into $\lceil \sqrt{k}(b_i-a_i)/2\delta \rceil$ intervals of equal length, and we let $S_i \subset [a_i,b_i]$ denote the set of endpoints of these intervals. Note that we always have $|S_i| = \lceil \sqrt{k}(b_i-a_i)/2\delta \rceil + 1$ and $\{a_i,b_i\} \subseteq S_i$. Furthermore, by construction, the distance between subsequent points in $S_i$ is at most $2\delta/\sqrt{k}$. Thus, for any $p \in [a_i,b_i]$ there exists $q \in S_i$ such that $|p-q| \leq \delta/\sqrt{k}$.

We now prove that $S = S_1 \times \dots \times S_d$ is a nice $\delta$-net of $R$. Consider any point $y \in R$. We construct $z \in S$ as follows: for each $i \in [d]$, let $z_i = \argmin_{p \in S_i} |p-y_i|$ (pick an arbitrary minimizer, if it is not unique). First of all, note that we indeed have $z \in S$ by construction of $S$. Furthermore, by construction of $S_i$, we have that $|z_i-y_i| \leq \delta/\sqrt{k}$ for all $i \in [d]$. This implies that $\|z-y\|_2 \leq \delta$, since $R$ is $k$-dimensional and thus $z$ and $y$ disagree on at most $k$ coordinates. Finally, let $F$ be any face of $R$ that contains $y$. Since $\{a_i,b_i\} \subseteq S_i$, the construction ensures that $z_i = y_i$ whenever $y_i \in \{a_i,b_i\}$. Thus, we also have $z \in F$. In other words, $S \cap F$ is a $\delta$-net of $F$.

Now consider the case where $R$ is $(d-1)$-dimensional and where $r$ is some value satisfying $r \geq \max_i (b_i-a_i)$ and $r \geq 8 \sqrt{d} \delta$. Using the fact that $b_i-a_i \leq r$ for all i, and that $b_j - a_j = 0$ for some $j$, we obtain
\begin{equation*}
\begin{split}
|S| = \prod_{i=1}^d \left(\left\lceil \frac{\sqrt{d-1}(b_i-a_i)}{2\delta} \right\rceil + 1\right) \leq \left(\left\lceil \frac{\sqrt{d-1}r}{2\delta} \right\rceil + 1\right)^{d-1} &\leq \left(\frac{\sqrt{d-1}r}{2\delta} + 2\right)^{d-1}\\
&\leq \left(\frac{\sqrt{d}r}{2\delta}\right)^{d-1}
\end{split}
\end{equation*}
where the last inequality follows from
\begin{equation*}
\begin{split}
\frac{\sqrt{d}r}{2\delta} - \frac{\sqrt{d-1}r}{2\delta} = \frac{(\sqrt{d}-\sqrt{d-1})r}{2\delta} &\geq 4\sqrt{d}(\sqrt{d}-\sqrt{d-1})\\
&\geq 2 (\sqrt{d} + \sqrt{d-1})(\sqrt{d}-\sqrt{d-1})\\
&\geq 2 (d-(d-1)) = 2
\end{split}
\end{equation*}
where we used $r \geq 8 \sqrt{d} \delta$.
\end{proof}

Before giving the algorithm we prove the following technical lemma, which will be important for the proof of correctness.

\begin{lemma}\label{lem:increase-eps}
Let $E$ be a $(d-1)$-dimensional hyperrectangle of $\mathbb{R}^d$ and let $S$ be a nice $\delta$-net of $E$. Let $\eps > 0$ and let $x \in \mathbb{R}^d$ with $\ell := \dist(x,E) > 0$. Then, if all $z \in S$ are $\eps$-unreachable from $x$, it follows that all $y \in E$ are $\eps'$-unreachable from $x$, where
\[\eps' = \eps + \frac{\delta^2}{2 \ell} (L + 2\eps/\ell).\]
\end{lemma}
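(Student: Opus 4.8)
The plan is to avoid comparing an arbitrary $y\in E$ to its nearest net point directly (this fails, as explained below) and instead route everything through the ``most reachable'' point of $E$. Let $z^\star \in \argmin_{z\in E}\bigl(f(z)+\eps\|x-z\|_2\bigr)$, which exists by compactness of $E$ and continuity of $f$; since $\|x-z^\star\|_2\ge\ell>0$ we have $z^\star\ne x$, so $z\mapsto f(z)+\eps\|x-z\|_2$ is differentiable at $z^\star$. The first step is a reduction: it suffices to show
\[ f(z^\star)+\eps\|x-z^\star\|_2 \;>\; f(x)-(\eps'-\eps)\,\ell . \]
Indeed, for any $y\in E$ the minimality of $z^\star$ and $\|x-y\|_2\ge\ell$ give $f(y)+\eps'\|x-y\|_2 = \bigl(f(y)+\eps\|x-y\|_2\bigr)+(\eps'-\eps)\|x-y\|_2 \ge \bigl(f(z^\star)+\eps\|x-z^\star\|_2\bigr)+(\eps'-\eps)\ell$, and combining with the displayed inequality yields $f(y)+\eps'\|x-y\|_2>f(x)$, i.e.\ $y$ is $\eps'$-unreachable from $x$.

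To prove the displayed inequality I would use the product structure. Write $E=[a_1,b_1]\times\dots\times[a_d,b_d]$ and let $F$ be the smallest face of $E$ containing $z^\star$; the free coordinates of $F$ are exactly those $i$ with $z^\star_i\in(a_i,b_i)$. By niceness of $S$, the set $S\cap F$ is a $\delta$-net of $F$, so there is a net point $z\in S\cap F$ with $\|z-z^\star\|_2\le\delta$, and since $z\in F$ the vector $z-z^\star$ is supported on the free coordinates of $F$. First-order optimality of $z^\star$ for $f(\cdot)+\eps\|x-\cdot\|_2$ over the box $E$ forces the partial derivatives in the free coordinates to vanish, hence $\langle\nabla(f+\eps\|x-\cdot\|_2)(z^\star),\,z-z^\star\rangle=0$, i.e.
\[ \langle\nabla f(z^\star),\,z-z^\star\rangle \;=\; -\,\eps\,\Bigl\langle \tfrac{z^\star-x}{\|z^\star-x\|_2},\, z-z^\star\Bigr\rangle \;=\; \frac{\eps\,\langle x-z^\star,\,z-z^\star\rangle}{\|x-z^\star\|_2}. \]

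Finally I would combine the descent lemma with the hypothesis. By $L$-smoothness, $f(z)\le f(z^\star)+\langle\nabla f(z^\star),z-z^\star\rangle+\tfrac L2\|z-z^\star\|_2^2$, while by hypothesis $z$ is $\eps$-unreachable from $x$, i.e.\ $f(z)>f(x)-\eps\|x-z\|_2$. Setting $a=\|x-z^\star\|_2$, $b=\|x-z\|_2$, $c=\|z-z^\star\|_2$ and substituting the gradient identity together with the law of cosines $\langle x-z^\star,z-z^\star\rangle=\tfrac12(a^2+c^2-b^2)$, the two inequalities combine to
\[ f(z^\star) \;>\; f(x)-\eps b-\frac{\eps(a^2+c^2-b^2)}{2a}-\frac L2 c^2 \;=\; f(x)+\frac{\eps(a-b)^2}{2a}-\eps a-\frac{\eps c^2}{2a}-\frac L2 c^2 \;\ge\; f(x)-\eps a-\frac{\eps c^2}{2a}-\frac L2 c^2, \]
where the last step drops the nonnegative term. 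Using $c\le\delta$, $a\ge\ell$, and then $(\eps'-\eps)\ell=\tfrac{L\delta^2}{2}+\tfrac{\eps\delta^2}{\ell}\ge\tfrac{L\delta^2}{2}+\tfrac{\eps\delta^2}{2\ell}$, this gives $f(z^\star)+\eps a> f(x)-\tfrac{\eps\delta^2}{2\ell}-\tfrac{L\delta^2}{2}\ge f(x)-(\eps'-\eps)\ell$, which is the reduced claim. (There is a little slack, so the stated constant is not tight.)

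The one genuinely nontrivial idea — and the step I expect to be the main obstacle to discover — is passing to the minimizer $z^\star$ of $f(\cdot)+\eps\|x-\cdot\|_2$. There is no a priori bound on $\|\nabla f\|$ in the hypotheses, so the term $\langle\nabla f,\cdot\rangle$ that appears when one moves from a net point to a nearby point of $E$ is uncontrolled for a generic point; at $z^\star$, however, first-order optimality pins the tangential gradient of $f$ to $-\eps\nabla\|x-\cdot\|_2$, whose norm is at most $\eps$, and the rest is elementary. The accompanying technical point is that the net point near $z^\star$ must lie in the smallest face containing $z^\star$ so that ``tangential'' means the right thing there — which is precisely what the niceness of the $\delta$-net provides.
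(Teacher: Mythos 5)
Your proof is correct and follows essentially the same route as the paper's. Both arguments pass to the minimizer $z^\star$ of the objective $y \mapsto f(y) + \eps\|x-y\|_2$ over $E$, identify the smallest face $F$ containing $z^\star$ so that first-order optimality forces the tangential derivative of this objective to vanish, and invoke the niceness of the $\delta$-net to produce a net point $z \in S \cap F$ with $\|z - z^\star\|_2 \leq \delta$. The one divergence is in the final second-order estimate: the paper restricts $\phi(y) = f(y) - f(x) + \eps\|x-y\|_2$ to the segment $[z^\star, z]$, shows the restricted derivative is $\delta^2(L + 2\eps/\ell)$-Lipschitz, and integrates; you instead apply the descent lemma $f(z) \leq f(z^\star) + \langle\nabla f(z^\star), z - z^\star\rangle + \tfrac{L}{2}\|z - z^\star\|_2^2$ to the smooth part and treat the Euclidean-norm term exactly via the law of cosines. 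Your version is a bit more elementary (no Hessian bound for $\|x - \cdot\|_2$ is needed) and, as you observe, gives a marginally sharper constant: your estimate only needs $\tfrac{\delta^2}{2}(L + \eps/\ell)$ in place of the paper's $\tfrac{\delta^2}{2}(L + 2\eps/\ell)$.
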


Here $\dist(x,E) := \min_{y \in E} \|x-y\|_2$.

\begin{proof}
Consider the function $\phi: E \to \mathbb{R}$ defined by
$$\phi(y) = f(y) - f(x) + \eps \|x-y\|_2.$$
For all $z \in S$ we have that $\phi(z) > 0$, since $z$ is $\eps$-unreachable from $x$. We will show that for all $y \in E$ we have $\phi(y) > - \frac{\delta^2}{2}(L + 2\eps/\ell)$. Note that this suffices to prove that all $y \in E$ are $\eps'$-unreachable from $x$, since
\begin{equation*}
\begin{split}
f(y) = f(x) - \eps \|x-y\|_2 + \phi(y) &> f(x) - \eps \|x-y\|_2 - \frac{\delta^2}{2}(L + 2\eps/\ell)\\
&\geq f(x) - \left(\eps + \frac{\delta^2}{2\ell}(L + 2\eps/\ell)\right) \|x-y\|_2\\
&= f(x) - \eps' \|x-y\|_2
\end{split}
\end{equation*}
where we used $\|x-y\|_2 \geq \dist(x,E) = \ell$.

Fix some $y^* \in \argmin_{y \in E} \phi(y)$. In the remainder of this proof, we show that $\phi(y^*) > - \frac{\delta^2}{2}(L + 2\eps/\ell)$. Let $F$ denote the smallest face of $E$ that contains $y^*$. If $F$ is $0$-dimensional, i.e., a corner of the hyperrectangle $E$, then by the definition of a nice $\delta$-net it follows that $S \cap F \neq \emptyset$ and thus $y^* \in S$. In particular, $\phi(y^*) > 0 \geq - \frac{\delta^2}{2}(L + 2\eps/\ell)$. Now consider the case where $F$ is $k$-dimensional for some $k \in \{1, 2, \dots, d-1\}$. Note that $y^*$ cannot lie on the boundary of $F$; otherwise, $F$ would not be the smallest face of $E$ containing $y^*$. Furthermore, $\phi$ is continuously differentiable on $E$, and thus also on the face $F$, since $f$ is continuously differentiable on $E$ and since $\|x-y\|_2 \geq \dist(x,E) > 0$ for all $y \in E$. Hence, given that $y^*$ is a minimum of $\phi$ on $F$, and that it lies in the interior of $F$, we must have that $[\nabla \phi(y^*)]_F = 0$. Here $[\cdot]_F \in \mathbb{R}^k$ denotes the restriction to the $k$ coordinates that are not fixed on the face $F$.

Since $S$ is a nice $\delta$-net of $E$, it follows that $S \cap F$ is a $\delta$-net of $F$. As a result, there exists $z \in S \cap F$ with $\|z-y^*\|_2 \leq \delta$. Let $\psi$ denote the function $\phi$ on the segment $[y^*,z]$ in $F$, parameterized as $\psi: [0,1] \to \mathbb{R}, t \mapsto \phi(y^* + t(z-y^*))$. Note that $\psi$ is continuously differentiable and $\psi'(t) = \langle z-y^*, \nabla \phi(y^* + t(z-y^*)) \rangle$. Using the fact that (i) $\nabla \phi(y) = \nabla f(y) + \eps (y-x)/\|y-x\|_2$, (ii) $\nabla f$ is $L$-Lipschitz-continuous, and (iii) $\|x-y\|_2 \geq \dist(x,E) = \ell$ for all $y \in E \supseteq F$, it can be shown that $\psi'(t)$ is $L'$-Lipschitz-continuous for $L' = \|z-y^*\|_2^2(L + 2\eps/\ell) \leq \delta^2(L + 2\eps/\ell)$.

Furthermore, $\psi'(0) = \langle z-y^*, \nabla \phi(y^*) \rangle = \langle [z-y^*]_F, \nabla^F \phi(y^*) \rangle = 0$, where we used the fact that $z$ and $y^*$ both lie in $F$ (and thus have the same value in all coordinates that are not fixed in $F$) and $\nabla^F \phi(y^*) = 0$. As a result, for any $t \in [0,1]$, we have
$$|\psi'(t)| = |\psi'(t) - \psi'(0)| \leq L' |t-0| \leq \delta^2(L + 2\eps/\ell)t.$$

Now, we can write
$$\phi(z) - \phi(y^*) = \psi(1) - \psi(0) = \int_0^1 \psi'(t) dt \leq \int_0^1 \delta^2(L + 2\eps/\ell)t dt = \frac{\delta^2}{2}(L + 2\eps/\ell)$$
which yields $\phi(y^*) \geq \phi(z) - \frac{\delta^2}{2}(L + 2\eps/\ell) > - \frac{\delta^2}{2}(L + 2\eps/\ell)$ as desired.
\end{proof}

We are now ready to present the full algorithm.

\begin{algorithm}[H]
\DontPrintSemicolon
\LinesNumbered
\caption{The Gradient Flow Parallel Trap (GFPT) Algorithm}
\SetKwInOut{Input}{input}\SetKwInOut{Output}{output}
\Input{ $\eps > 0$, $L > 0$, $d \geq 2$, $x_0 \in \mathbb{R}^d$, query access to $f: \mathbb{R}^d \to [0, + \infty)$ with $L$-Lipschitz $\nabla f$}
\Output{ an $\eps$-stationary point of $f$}
Set $t := 0$ and $\eps_0 := \eps/4$. Let $C_1 := 75 \sqrt{d}$ and $C_2 := 16 d$.\;

Initialize hyperrectangle $R_0 = [a_1,b_1] \times \dots \times [a_d,b_d]$ by setting
    \[R_0 := \{x \in \mathbb{R}^d : \|x-x_0\|_\infty \leq 2 f(x_0)/\eps_0\}.\]

\While{$r_t := \max_i (b_i-a_i) > \frac{\eps}{2\sqrt{d}L}$}{
Pick $j \in \argmax_i (b_i-a_i)$ (arbitrarily).\;

Set $\delta_t := \sqrt{\frac{\eps}{C_1 C_2 L} r_t (3/4)^{\lfloor t/d \rfloor}}$ and query $f$ on a nice $\delta_t$-net $S_1$ of $E_1$ and $S_2$ of $E_2$, where
        \[E_1 = [a_1,b_1] \times \dots \times \{a_j + r_t/3\} \times \dots \times [a_d,b_d],\]
        \[E_2 = [a_1,b_1] \times \dots \times \{b_j - r_t/3\} \times \dots \times [a_d,b_d].\]
        
Let $S^* := \{z \in S_1 \cup S_2: f(z) \leq f(x_t) - \eps_t \|x_t-z\|_2\}$ be the set of all points on the nets that are \emph{not} $\eps_t$-unreachable from $x_t$.\;

  \lIf{$S^* = \emptyset$}{$x_{t+1} := x_t$ \textbf{else} $x_{t+1} := z^* \in \argmin_{z \in S^*} f(z)$.}

  \lIf{$[x_{t+1}]_j \geq a_j + r_t/2$}{$a_j := a_j + r_t/3$ \textbf{else} $b_j := b_j - r_t/3$.}
  
  Set $\eps_{t+1} := \eps_t + \frac{\eps (3/4)^{\lfloor t/d \rfloor}}{C_2}$ and update $t := t+1$.
}

Output $x_t$.
\end{algorithm}

Note that at each iteration of the algorithm, the length of the rectangle $R$ along some dimension $j$ decreases by a factor $2/3$. Since we always pick a dimension $j$ along which the rectangle $R$ has maximal length, and since we stop as soon as the length in each dimension is at most $\eps/2\sqrt{d}L$, the algorithm terminates after $T = d \lceil \log_{3/2}(r_0/(\eps/2\sqrt{d}L)) \rceil = d \lceil \log_{3/2}(16\sqrt{d}f(x_0)L/\eps^2) \rceil$ iterations.

We first argue about the correctness of the algorithm.

\begin{lemma}
The output of the algorithm $x_T$ is an $\eps$-stationary point of $f$.
\end{lemma}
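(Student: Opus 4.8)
The plan is to reduce the statement to an application of Corollary~\ref{cor:solution-unconstrained} at the terminal pair $(R_T,x_T)$. For this I need two facts: \textbf{(a)} the \emph{invariant} that at every iteration $t$ one has $x_t\in R_t$ and every $y\in\partial R_t$ is $\eps_t$-unreachable from $x_t$; and \textbf{(b)} the bound $\eps_T\le\eps/2$. Granting these, the failure of the while-condition at termination gives $r_T=\max_i(b_i-a_i)\le\eps/(2\sqrt d L)$; and since $\eps_T\le\eps/2$, every $y\in\partial R_T$ is in particular $(\eps/2)$-unreachable from $x_T$, so Corollary~\ref{cor:solution-unconstrained} yields that $x_T$ is an $\eps$-stationary point. (If $f(x_0)=0$ then $x_0$ is a global minimum of the nonnegative function $f$, hence stationary, so we may assume $f(x_0)>0$.)

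Fact (b) is pure arithmetic. Iterating $\eps_{t+1}=\eps_t+\eps(3/4)^{\lfloor t/d\rfloor}/C_2$ from $\eps_0=\eps/4$ gives
$\eps_T=\eps/4+(\eps/C_2)\sum_{t=0}^{T-1}(3/4)^{\lfloor t/d\rfloor}\le\eps/4+(\eps/C_2)\cdot d\sum_{k\ge0}(3/4)^k=\eps/4+4d\eps/C_2=\eps/2$,
using $C_2=16d$; since $(\eps_t)$ is nondecreasing, $\eps_t\le\eps/2$ for all $t\le T$.

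Fact (a) is the heart of the proof and I would establish it by induction on $t$. The base case $t=0$ is immediate: any $y\in\partial R_0$ has $\|y-x_0\|_2\ge\|y-x_0\|_\infty=2f(x_0)/\eps_0$, so $f(x_0)-\eps_0\|x_0-y\|_2\le-f(x_0)<0\le f(y)$, and $x_0\in R_0$ by construction. For the inductive step I would first read off from the update rule that $x_{t+1}\in R_{t+1}$ and that $\partial R_{t+1}$ is the union of the surviving portion of $\partial R_t$ with exactly one of the cutting faces $E_1,E_2$ — the one separating $x_{t+1}$ from the discarded slab. If $S^*=\emptyset$ then $x_{t+1}=x_t$: the surviving portion of $\partial R_t$ stays $\eps_t$- (hence $\eps_{t+1}$-) unreachable from $x_t$ by the hypothesis, and the new face, whose distance from $x_t$ is at least $r_t/6$, is handled by Lemma~\ref{lem:increase-eps} applied to the net of that face (all of whose points are $\eps_t$-unreachable from $x_t$ since $S^*=\emptyset$), giving $\eps'$-unreachability with $\eps'=\eps_t+\frac{\delta_t^2}{2\ell}(L+2\eps_t/\ell)$ and $\ell\ge r_t/6$. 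If $S^*\neq\emptyset$ and $z^*\in\argmin_{z\in S^*}f(z)$, say $z^*\in S_1$, then $x_{t+1}=z^*$: by the crucial observation (from $f(z^*)\le f(x_t)-\eps_t\|x_t-z^*\|_2$ it follows that every point $\eps_t$-unreachable from $x_t$ is $\eps_t$-unreachable from $z^*$) the surviving portion of $\partial R_t$ stays $\eps_{t+1}$-unreachable from $z^*$; the new face is $E_2$ with $\dist(z^*,E_2)\ge r_t/3$, and every $z\in S_2$ is $\eps_t$-unreachable from $z^*$ — via the crucial observation if $z\notin S^*$, and via $f(z)\ge f(z^*)$ together with $\|z^*-z\|_2\ge r_t/3>0$ if $z\in S^*$ — so Lemma~\ref{lem:increase-eps} again gives $\eps'$-unreachability of $E_2$ from $z^*$ with $\ell\ge r_t/6$ (and the mirror-image argument handles $z^*\in S_2$). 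In every case $\eps'$-unreachability with $\eps'\le\eps_{t+1}$ implies $\eps_{t+1}$-unreachability, closing the induction once the last inequality is checked.

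That last check is the main obstacle. Substituting $\delta_t^2=\frac{\eps}{C_1C_2L}r_t(3/4)^{\lfloor t/d\rfloor}$ and $\ell\ge r_t/6$ into the formula for $\eps'$ gives, after a short computation, $\eps'-\eps_t\le\frac{\eps(3/4)^{\lfloor t/d\rfloor}}{C_1C_2}\bigl(3+\tfrac{36\eps_t}{Lr_t}\bigr)$; while the loop is still running we have $r_t>\eps/(2\sqrt d L)$, so $\eps_t/(Lr_t)<2\sqrt d\,\eps_t/\eps\le\sqrt d$ by fact (b), making the parenthesis at most $3+36\sqrt d\le75\sqrt d=C_1$, hence $\eps'-\eps_t\le\eps(3/4)^{\lfloor t/d\rfloor}/C_2=\eps_{t+1}-\eps_t$. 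I expect the real work to be exactly this bookkeeping: correctly identifying which cutting face becomes the new boundary in each of the four subcases, verifying the distance lower bounds ($r_t/6$, resp.\ $r_t/3$), and confirming that the two constants $C_1=75\sqrt d$, $C_2=16d$ simultaneously guarantee that $\eps$ grows by at most a constant factor in total (fact (b)) while growing little enough per step (the $\eps'\le\eps_{t+1}$ estimate). Everything else is a routine application of Lemma~\ref{lem:eps-unreachable}/Corollary~\ref{cor:solution-unconstrained} and the crucial observation.
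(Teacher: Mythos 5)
Your proposal is correct and follows essentially the same route as the paper's proof: the same invariant, the same base case, the same two-case induction ($S^*=\emptyset$ vs.\ $S^*\neq\emptyset$) with the triangle-inequality observation, the same application of Lemma~\ref{lem:increase-eps} with $\ell\ge r_t/6$ (resp.\ $r_t/3$), and the same geometric-sum bound $\eps_T\le\eps/2$ feeding into Corollary~\ref{cor:solution-unconstrained}. The only cosmetic difference is in the final arithmetic: you use $\eps_t\le\eps/2$ to bound $2\eps_t/\ell$ (getting $3+36\sqrt d$), whereas the paper uses the weaker $\eps_t\le\eps$ (getting $3+72\sqrt d$) — both fall under $C_1=75\sqrt d$, so the conclusion is identical.
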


\begin{proof}
First of all, note that if $f(x_0) = 0$, then the algorithm outputs $x_0$ and $x_0$ is a global minimum of $f$ and thus a stationary point. Thus, for the rest of this proof we assume that $f(x_0) > 0$.

Let $R_t$ denote the hyperrectangle at the beginning of iteration $t$. In particular, $R_0 = R$.
We will show that the algorithm satisfies the following invariant at every iteration: all points $y \in \partial R_t$ are $\eps_t$-unreachable from $x_t$.

Let us first see why this invariant implies the correctness of the algorithm. At the last iteration $T$ of the algorithm, we can bound
\begin{equation*}
\begin{split}
\eps_T = \eps_{T-1} + \frac{\eps (3/4)^{\lfloor (T-1)/d \rfloor}}{C_2} = \dots = \eps_0 + \sum_{t=0}^{T-1} \frac{\eps (3/4)^{\lfloor t/d \rfloor}}{C_2} &\leq \eps_0 + \sum_{t=0}^{\infty} \frac{\eps (3/4)^{\lfloor t/d \rfloor}}{C_2}\\
&= \eps_0 + \frac{\eps}{16d} \cdot d \cdot \sum_{i=0}^\infty (3/4)^i\\
&\leq \eps/4 + \eps/4 = \eps/2.
\end{split}
\end{equation*}
Thus, by using the invariant for $t=T$, all $y \in \partial R_T$ are $(\eps/2)$-unreachable from $x_T$. Since the rectangle $R_T$ has side-length at most $\eps/2\sqrt{d}L$, by Corollary~\ref{cor:solution-unconstrained} $x_T$ is an $\eps$-stationary point of $f$.

It remains to prove the invariant. Note that by construction of $R = R_0$ the invariant holds at $t=0$. Indeed, for all $y \in \partial R_0$, we have $\|x_0-y\|_2 \geq \|x_0-y\|_\infty = 2f(x_0)/\eps_0$ and together with the fact that $f(y) \geq 0$ this yields
$$f(y) \geq 0 \geq 2f(x_0) - \eps_0 \|x_0-y\|_2 > f(x_0) - \eps_0 \|x_0-y\|_2$$
where we used the fact that $f(x_0) > 0$.

Now consider any $t$ such that the invariant holds for iteration $t$. We will show that it also holds for iteration $t+1$.

There are two cases to consider, depending on whether $S^* = \emptyset$ or not. First consider the case where $S^* = \emptyset$. In that case, the algorithm sets $x_{t+1} := x_t$. Note that all points $y \in \partial R_{t+1} \cap \partial R_t$ are $\eps_t$-unreachable from $x_t$ by the invariant, and so also $\eps_{t+1}$-unreachable from $x_{t+1}$, since $\eps_{t+1} \geq \eps_t$. It remains to show that all points in $\partial R_{t+1} \setminus \partial R_t$ are also $\eps_{t+1}$-unreachable from $x_t$. By construction of the algorithm it holds that $\partial R_{t+1} \setminus \partial R_t \subseteq E_i$, where $i \in \{1,2\}$ is such that $\ell := \dist(x_t, E_i) \geq r_t/6$. Furthermore, since $S^* = \emptyset$, it follows that all points on the nice $\delta_t$-net $S_i$ of $E_i$ are $\eps_t$-unreachable from $x_t$. As a result, we can apply Lemma~\ref{lem:increase-eps} to deduce that all $y \in E_i$ are $\eps'$-unreachable from $x$, where
\begin{equation*}
\begin{split}
\eps' = \eps_t + \frac{\delta_t^2}{2\ell}(L + 2\eps_t/\ell) \leq \eps_t + \frac{\delta_t^2}{2\ell}(L + 24\sqrt{d}L) \leq \eps_t + \frac{3\delta_t^2}{r_t}(L + 24\sqrt{d}L) &\leq \eps_t + \frac{75\sqrt{d}L\delta_t^2}{r_t}\\
&= \eps_t + \frac{C_1 L\delta_t^2}{r_t}\\
&= \eps_t + \frac{\eps (3/4)^{\lfloor t/d \rfloor}}{C_2}\\
&= \eps_{t+1}
\end{split}
\end{equation*}
where we used $\ell \geq r_t/6 \geq \eps/12\sqrt{d}L$, $\eps_t \leq \eps$ and $\delta_t := \sqrt{\frac{\eps}{C_1 C_2 L} r_t (3/4)^{\lfloor t/d \rfloor}}$. Thus the invariant holds in the first case.

Now consider the case where $S^* \neq \emptyset$. In that case, the algorithm sets $x_{t+1} := z^*$, where $z^* \in \argmin_{z \in S^*} f(z)$. As before, by the invariant we know that all points in $\partial R_{t+1} \cap \partial R_t$ are $\eps_t$-unreachable from $x_t$. Furthermore, since $z^* \in S^*$, we have that $f(z^*) \leq f(x_t) - \eps_t \|x_t - z^*\|_2$. As a result, using the triangle inequality, we obtain that for any $y \in \partial R_{t+1} \cap \partial R_t$
$$f(y) > f(x_t) - \eps_t \|x_t-y\|_2 \geq f(z^*) + \eps_t \|x_t-z^*\|_2 - \eps_t \|x_t-y\|_2 \geq f(z^*) - \eps_t \|z^*-y\|_2$$
i.e., $y$ is $\eps_t$-unreachable from $z^*$. Since $\eps_{t+1} \geq \eps_t$, it follows that all points in $\partial R_{t+1} \cap \partial R_t$ are $\eps_{t+1}$-unreachable from $z^*$. It remains to show that all points in $\partial R_{t+1} \setminus \partial R_t$ are also $\eps_{t+1}$-unreachable from $z^* = x_{t+1}$. By construction of the algorithm it holds that $\partial R_{t+1} \setminus \partial R_t \subseteq E_i$, where $i \in \{1,2\}$ is such that $\dist(z^*, E_i) = r_t/3$. We first show that all points in $S_i$ are $\eps_t$-unreachable from $z^*$. For all points $z \in S_i \setminus S^*$, note that they are $\eps_t$-unreachable from $x_t$, and thus also $\eps_t$-unreachable from $z^*$ by using the triangle inequality as we did earlier for the points $y \in \partial R_{t+1} \cap \partial R_t$. For all points $z \in S_i \cap S^*$, we must have $f(z) \geq f(z^*)$ since we picked $z^* \in \argmin_{z \in S^*} f(z)$. But since $z \in S_i \subset E_i$ and $\dist(z^*, E_i) = r_t/3$, we must have $\|z^*-z\|_2 \geq r_t/3 > 0$, and thus $f(z) > f(z^*) - \eps_t \|z^* - z\|_2$, i.e., $z$ is $\eps_t$-unreachable from $z^*$. Thus, all points in $S_i$ are $\eps_t$-unreachable from $z^*$. Finally, just as in the previous case, we can use Lemma~\ref{lem:increase-eps} to deduce from this that all points in $E_i$ are $\eps_{t+1}$-unreachable from $z^*$. In particular, note that we have $\dist(z^*, E_i) = r_t/3 \geq r_t/6$ so the same arguments can indeed be applied. As a result, the invariant holds in the second case as well.
\end{proof}

\begin{lemma}
For any $d \geq 2$, the number of queries to $f$ performed by the algorithm is
$$O(d)^{\frac{5d-1}{4}} \left(\frac{\sqrt{L f(x_0)}}{\eps}\right)^{d-1}$$
In particular, for $d=2$ the number of queries is $O(\sqrt{L f(x_0)}/\eps)$.
\end{lemma}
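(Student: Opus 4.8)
The plan is to bound the total query count by summing, over all iterations of the while loop, the sizes of the two nice $\delta_t$-nets queried at iteration $t$, and then to recognise this sum as (essentially) a convergent geometric series. The queries at iteration $t$ are made on nice $\delta_t$-nets of the $(d-1)$-dimensional hyperrectangles $E_1,E_2$, which are two opposite faces of the current hyperrectangle $R_t$. Since $R_t$ stays full-dimensional throughout (its sides are only ever shrunk by the factor $2/3$, never collapsed to a point) and each side of $E_i$ is one of the sides of $R_t$ other than the $j$-th, the maximal side length of $E_i$ is at most $r_t$. To apply the bound $|S_i| \le (\sqrt{d}\,r_t/2\delta_t)^{d-1}$ from Lemma~\ref{lem:nice-net} with $r := r_t$, I first verify its hypothesis $r_t \ge 8\sqrt{d}\,\delta_t$. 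Squaring and using $\delta_t^2 = \frac{\eps}{C_1 C_2 L}\,r_t\,(3/4)^{\lfloor t/d\rfloor}$, this reduces to $r_t \ge \frac{64 d \eps}{C_1 C_2 L}(3/4)^{\lfloor t/d\rfloor}$, which holds because $r_t > \eps/2\sqrt{d}L$ by the while-loop guard and $C_1 C_2 = 1200\,d^{3/2} \ge 128\,d^{3/2}$. Hence at iteration $t$ the algorithm makes at most $2(\sqrt{d}\,r_t/2\delta_t)^{d-1}$ queries.

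Next I would track how $r_t$ and $\lfloor t/d\rfloor$ evolve together. The initial hyperrectangle $R_0$ is a cube of side $r_0 = 2f(x_0)/\eps_0 = 8 f(x_0)/\eps$, and at each iteration the algorithm shrinks a side of \emph{maximal} length by the factor $2/3$; an easy induction shows that after each full ``round'' of $d$ iterations all $d$ sides have been shrunk exactly once, so $R_{id}$ is a cube of side $(2/3)^i r_0$ and $r_t = (2/3)^i r_0$ for every $t$ with $\lfloor t/d\rfloor = i$. Substituting $\delta_t$ into the net-size bound and using $r_t/(3/4)^{\lfloor t/d\rfloor} = (2/3)^i r_0/(3/4)^i = (8/9)^i r_0$, we get, for $t$ in round $i$,
\begin{equation*}
|S_1| + |S_2| \;\le\; 2\left(\frac{\sqrt{d}}{2}\right)^{d-1}\left(\frac{C_1 C_2 L\,r_0}{\eps}\right)^{\frac{d-1}{2}}\left(\frac{8}{9}\right)^{\frac{i(d-1)}{2}}.
\end{equation*}
This is the crucial point: the discount factor $(3/4)^{\lfloor t/d\rfloor}$ built into the definition of $\delta_t$ is tuned so that the per-iteration query count still decays geometrically across rounds, at rate $(8/9)^{(d-1)/2} < 1$, even though $r_t$ itself only shrinks by $2/3$ per round.

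Summing over the $d$ iterations of each round and over all rounds $i = 0, 1, 2, \dots$ (overcounting is harmless), the total number of queries is at most
\begin{equation*}
2d\left(\frac{\sqrt{d}}{2}\right)^{d-1}\left(\frac{C_1 C_2 L\,r_0}{\eps}\right)^{\frac{d-1}{2}}\sum_{i=0}^{\infty}\left(\frac{8}{9}\right)^{\frac{i(d-1)}{2}} \;=\; O(d)\left(\frac{\sqrt{d}}{2}\right)^{d-1}\left(\frac{C_1 C_2 L\,r_0}{\eps}\right)^{\frac{d-1}{2}},
\end{equation*}
since the geometric series has ratio at most $(8/9)^{1/2} < 1$ and hence sums to an absolute constant. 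Finally I substitute $C_1 = 75\sqrt{d}$, $C_2 = 16d$, $r_0 = 8 f(x_0)/\eps$, so that $C_1 C_2 L\,r_0/\eps = 9600\,d^{3/2} L f(x_0)/\eps^2$, and collect powers of $d$: the product $d \cdot d^{(d-1)/2} \cdot d^{3(d-1)/4}$ equals $d^{(5d-1)/4}$, while the leftover numerical factor is of the form $c^d$ for an absolute constant $c$, which is absorbed into $O(d)^{(5d-1)/4}$ because $(5d-1)/4 \ge d$ for all $d \ge 1$. This yields $O(d)^{(5d-1)/4}\,(\sqrt{L f(x_0)}/\eps)^{d-1}$; for $d=2$ the prefactor $d^{(5d-1)/4}$ is a constant and $(\sqrt{L f(x_0)}/\eps)^{d-1} = \sqrt{L f(x_0)}/\eps$, giving $O(\sqrt{L f(x_0)}/\eps)$.

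The main obstacle is getting the geometric-series structure exactly right, i.e., matching the $(3/4)$-per-round discount inside $\delta_t$ against the $(2/3)$-per-round shrinkage of $r_t$ so that the net query-count ratio $(8/9)^{(d-1)/2}$ is strictly below $1$; the rest — checking the hypothesis of Lemma~\ref{lem:nice-net} and consolidating the powers of $d$ at the end — is routine bookkeeping that nonetheless requires the constants $C_1, C_2$ to have been chosen large enough.
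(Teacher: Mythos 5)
Your proof is correct and follows essentially the same approach as the paper: bound the per-iteration query count via the nice $\delta$-net lemma, verify its hypothesis $r_t \geq 8\sqrt{d}\delta_t$ using the while-loop guard, substitute $r_t = (2/3)^{\lfloor t/d\rfloor} r_0$ and the definition of $\delta_t$ to expose the geometric decay rate $(8/9)^{(d-1)/2}$ across rounds, and sum the resulting geometric series before consolidating constants. The only cosmetic differences are that you spell out the round-by-round induction showing $R_{id}$ is a cube (which the paper just asserts) and phrase the series bound slightly differently; the arithmetic and conclusions match.
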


\begin{proof}
Let $q_t$ denote the number of queries performed by the algorithm in iteration $t$. Then, the total number of queries performed by the algorithm is $Q = \sum_{t=0}^{T-1} q_t$. In iteration $t$ the algorithm queries the value of $f$ on two nice $\delta_t$-nets $S_1$ and $S_2$ of $E_1$ and $E_2$ respectively. Since $E_1$ and $E_2$ are $(d-1)$-dimensional hyperrectangles with side-length bounded by $r_t$, using Lemma~\ref{lem:nice-net} we obtain that
$$q_t \leq 2 \left(\frac{\sqrt{d}r_t}{2\delta_t}\right)^{d-1}.$$
Note that Lemma~\ref{lem:nice-net} also requires $r_t \geq 8\sqrt{d} \delta_t$. This indeed holds for all $t \in \{0, 1, \dots, T-1\}$, since
$$\frac{r_t}{\delta_t} = \sqrt{\frac{C_1 C_2 L r_t}{\eps (3/4)^{\lfloor t/d \rfloor}}} \geq \sqrt{\frac{C_1 C_2 L r_t}{\eps}} \geq \sqrt{\frac{C_1 C_2}{2\sqrt{d}}} \geq 8 \sqrt{d}$$
where we used $r_t \geq \eps/2\sqrt{d}L$. As a result, the total number of queries $Q$ can be bounded as follows
\begin{equation*}
\begin{split}
Q = \sum_{t=0}^{T-1} q_t \leq \sum_{t=0}^{T-1} 2 \left(\frac{\sqrt{d}r_t}{2\delta_t}\right)^{d-1} &= 2 \sum_{t=0}^{T-1} \left( \sqrt{\frac{d C_1 C_2 L r_t}{4 \eps (3/4)^{\lfloor t/d \rfloor}}} \right)^{d-1}\\
&\leq 2 \left(\frac{d C_1 C_2}{4}\right)^{\frac{d-1}{2}} \left(\frac{L r_0}{\eps}\right)^{\frac{d-1}{2}} \sum_{t=0}^{T-1} \left( \frac{(2/3)^{\lfloor t/d \rfloor}}{(3/4)^{\lfloor t/d \rfloor}} \right)^{\frac{d-1}{2}}
\end{split}
\end{equation*}
where we used $r_t = r_0 (2/3)^{\lfloor t/d \rfloor}$. Furthermore, we can bound
$$\sum_{t=0}^{T-1} \left( \frac{(2/3)^{\lfloor t/d \rfloor}}{(3/4)^{\lfloor t/d \rfloor}} \right)^{\frac{d-1}{2}} \leq d \sum_{i=0}^{\infty} \left( \left(\frac{8}{9}\right)^{i} \right)^{\frac{d-1}{2}} = \frac{d}{1-(8/9)^{(d-1)/2}} \leq 18 d$$
since $d \geq 2$. Thus, we obtain
$$Q \leq O(d)^{\frac{5d-1}{4}} \left(\frac{L r_0}{\eps}\right)^{\frac{d-1}{2}}.$$
Finally, using the fact that $r_0 = 2f(x_0)/\eps_0 = 8f(x_0)/\eps$ we have
$$Q \leq O(d)^{\frac{5d-1}{4}} \left(\frac{\sqrt{L f(x_0)}}{\eps}\right)^{d-1}.$$
\end{proof}

\begin{remark}
The analysis shows that we could have replaced the constant $3/4$ by any other constant $\alpha \in (2/3,1)$. In that case one would also have to modify the values of $C_1$ and $C_2$ accordingly.
\end{remark}

\subsection{Adapting GFPT to the constrained setting}\label{sec:algo-constrained}

In this section we briefly discuss how the algorithm can be adapted to the problem of finding stationary points in a compact domain.

To be more specific, let us consider the minimization problem for a function $f: [0,1]^d \to \mathbb{R}$ with an $L$-Lipschitz-continuous gradient on $[0,1]^d$. Note that unlike in the unconstrained case, we no longer assume (i) that the function is lower bounded, and (ii) that we have a starting point $x_0$. The goal is to find an $\eps$-stationary point for the constrained setting, also known as an $\eps$-KKT point, i.e., a point $x \in [0,1]^d$ such that $\|g(x)\|_2 \leq \eps$, where $g$ is the \emph{projected gradient} of $f$ on $[0,1]^d$. For the minimization setting, the projected gradient $g: [0,1]^d \to \mathbb{R}^d$ of $f$ on $[0,1]^d$ is defined as
\begin{equation*}
g_i(x) = \left\{\begin{tabular}{ll}
$\min\{0,[\nabla f(x)]_i\}$     &  if $x_i = 0$\\
$[\nabla f(x)]_i$     &  if $x_i \in (0,1)$\\
$\max\{0,[\nabla f(x)]_i\}$     &  if $x_i = 1$
\end{tabular}\right.
\end{equation*}
The modifications to the algorithm are very mild, namely:
\begin{itemize}
    \item \textbf{Initialization:} We initialize $x_0$ to be an arbitrary point in $[0,1]^d$, e.g., $x = (1/2, \dots, 1/2)$. Furthermore, instead of initializing
    \[R_0 := \{x \in \mathbb{R}^d : \|x-x_0\|_\infty \leq 2 f(x_0)/\eps_0\}\]
    we initialize
    \[R_0 := [0,1]^d.\]
    \item \textbf{Extraction of a solution:} Instead of outputting $x_T$, we check the $2^d$ corners of $R_T$ until we find one with $\|g(y)\|_2 \leq \eps$. We can use $O(d)$ queries to $f$ to compute a sufficiently good approximation of $\nabla f$ and thus $g$ at any point.
\end{itemize}
The rest of the algorithm is unchanged. The analysis of the algorithm is mostly the same. Here we highlight the main differences.
\begin{itemize}
    \item \textbf{Correctness:}
    The invariant is modified to say that at every iteration $t$: for all points $y$ that lie on a facet $E$ of $R_t$ with $E \nsubseteq \partial [0,1]^d$, we have that $y$ is $\eps_t$-unreachable from $x_t$. In other words, the old invariant still holds, but only on facets of $R_t$ that are not part of the boundary of the domain $[0,1]^d$. The proof of this new invariant is essentially identical to the proof for the old invariant.
    
    This new invariant, together with a modified version of the proof of Lemma~\ref{lem:eps-unreachable} (where we consider the piecewise differentiable \emph{projected} gradient flow defined by $\gamma'(t) = -g(\gamma(t))$) yields: there exists a point $x \in R_T$ with $\|g(x)\|_2 \leq \eps/2$. A simple argument then shows that any corner $y$ of the smallest face of $R_T$ containing $x$ must satisfy $\|g(y)\|_2 \leq \eps$. Thus, the algorithm indeed returns an $\eps$-stationary point.
    \item \textbf{Running time:} The analysis of the number of queries used by the algorithm is identical to the unconstrained version, up to the point where the bound
    $$O(d)^{\frac{5d-1}{4}} \left(\frac{L r_0}{\eps}\right)^{\frac{d-1}{2}}$$
    is derived. Then, using the fact that we now have $r_0 = 1$ (instead of $r_0 = 2f(x_0)/\eps_0$), we obtain the bound
    $$O(d)^{\frac{5d-1}{4}} \left(\sqrt{\frac{L}{\eps}}\right)^{d-1}.$$
\end{itemize}

\section*{Acknowledgments}

We thank the reviewers for helpful suggestions, and Takashi Ishizuka for providing feedback on an earlier version. AH was supported by the Swiss State Secretariat for Education, Research and Innovation (SERI) under contract number MB22.00026. MZ was supported by the Army Research Office (ARO) under contract W911NF-17-1-0304 as part of the collaboration between US DOD, UK MOD and UK Engineering and Physical Research Council (EPSRC) under the Multidisciplinary University Research Initiative (MURI).

\clearpage

\appendix

\section{Definition of Search Problems and Reductions} \label{app:tfnp}

\begin{definition}[Search Problems - $\FNP$] \label{def:FNP}
    A binary relation $\calQ \subseteq \set{0, 1}^* \times \set{0, 1}^*$ is in
  the class $\FNP$ if (i) for every $\vecx, \vecy \in \set{0, 1}^*$ such that
  $(\vecx, \vecy) \in \calQ$, it holds that 
  $\abs{\vecy} \le \poly(\abs{\vecx})$; and (ii) there exists an algorithm that
  verifies whether $(\vecx, \vecy) \in \calQ$ in time 
  $\poly(\abs{\vecx}, \abs{\vecy})$. The \textit{search problem} associated with
  a binary relation $\calQ$ takes some $\vecx$ as input and requests as output
  some $\vecy$ such that $(\vecx, \vecy) \in \calQ$ or $\bot$ if no
  such $\vecy$ exists. 
  The \textit{decision problem} associated with $\calQ$
  takes some $\vecx$ as input and requests as output the bit $1$, if there
  exists some $\vecy$ such that $(\vecx, \vecy) \in \calQ$, and the bit $0$,
  otherwise. The class $\NP$ is defined as the set of decision problems
  associated with relations $\calQ \in \FNP$. 
  The class $\TFNP$ is defined as the set of all $\FNP$ problems $\calQ$ that are \emph{total}, i.e., for all $\vecx \in \set{0, 1}^*$ there exists $\vecy \in \set{0, 1}^*$ with $(\vecx, \vecy) \in \calQ$.
\end{definition}

\begin{definition}[Polynomial-Time Reductions]
    A search problem $P_1$ is \textit{polynomial-time reducible} to $P_2$ if there exist polynomial-time computable functions 
  $f : \set{0, 1}^* \to \set{0, 1}^*$ and 
  $g : \set{0, 1}^* \times \set{0, 1}^* \to \set{0, 1}^*$
  with the following properties: (i) if $\vecx$ is an input to $P_1$, then
  $f(\vecx)$ is an input to $P_2$; and (ii) if $\vecy$ is a solution to $P_2$ on
  input $f(\vecx)$, then $g(\vecx, \vecy)$ is a solution to $P_1$ on
  input $\vecx$.
\end{definition}

We also refer to \cite{BeameCEIP98-NP-search} for a formal definition of black-box reductions.

\subsection{Complete Definition of the Problem \stationary/} \label{app:complete:stationary}

\begin{tcolorbox}[breakable,enhanced]
	\begin{definition}\label{def:kkt-general}
		\stationary/:
		
		\noindent\textbf{Input}:
		\begin{itemize}
			\item precision parameter $\varepsilon > 0$,
			\item Turing machines $\calC_f$ and $\calC_{\nabla f}$ representing $f: \mathbb{R}^d \to \mathbb{R}$ and $\nabla f: \mathbb{R}^d \to \mathbb{R}^d$,
			\item a boundedness constant $B > 0$, and a smoothness constant $L > 0$.
		\end{itemize}
		
		\noindent\textbf{Goal}: Find $x^{\star} \in \R^d$ such that $\norm{\nabla f(x^{\star})}_2 \le \eps$.
        \smallskip
		
		\noindent Alternatively, we also accept one of the following violations as a solution:
		\begin{itemize}
			\item ($f$ or $\nabla f$ is not $L$-Lipschitz) $x,y \in \R^d$ such that
			$$|f(x) - f(y)| > L \|x-y\|_2 \qquad \text{or} \qquad \|\nabla f(x) - \nabla f(y)\|_2 > L \|x-y\|_2,$$
            \item ($f$ is not $B$-bounded) $x\in \R^d$ such that
			$$|f(x)| > B,$$
			\item ($\nabla f$ is not the gradient of $f$) $x,y \in \R^d$ that contradict Taylor's theorem, i.e.,
			$$\bigl|f(y) - f(x) - \langle \nabla f(x), y-x \rangle \bigr| > \frac{L}{2} \|y-x\|_2^2.$$
		\end{itemize}
	\end{definition}
\end{tcolorbox}

\section{Membership in $\PLS$ -- Proof of Lemma \ref{lem:inclusion}} \label{app:inclusion}

    We formalize the argument that we presented in the beginning of Section \ref{sec:inclusion}. Let $\gamma = \eps/(\sqrt{d} L)$, $m = \lceil 8\sqrt{d}B/(\eps \gamma) \rceil$, and $R = m \gamma$. Define the following grid of $[-R,R]^d$
    \[ G_{\gamma} = \set{\gamma \cdot a \mid a \in [[-m, m]]^d}, \]
    In other words, $G_{\gamma}$ is the ortho-canonical grid of $[-R, R]^d$ with step $\gamma$ between two neighboring vertices of the grid. 
    
    Our next goal is to define a neighbor function $N : G_{\gamma} \to G_{\gamma}$ and a potential function $P : G_{\gamma} \to [-B - 1, B + 1]$ such that every point $x \in G_{\gamma}$ with $P(N(x)) \geq P(x)$ is an $\eps$-stationary point of $f$. 
    \medskip
    
    \noindent \textbf{Valid and invalid points.} From the definition of \stationary/ we are given a Turing machine $\calC_f$ that on input $x$ runs in time $\poly(\len(x))$ and outputs the value of $f(x)$. First, we define the set of \textit{valid points} $\mathcal{V}_{\gamma}$ which is a subset of the grid points $G_{\gamma}$. A point $v \in G_{\gamma}$ is valid, i.e., $v \in \mathcal{V}_{\gamma}$, if and only if, $f(v) \le f(0) - (\eps/(2\sqrt{d})) \cdot \norm{v}_2 $. Intuitively, a point $v$ is valid if and only if it is possible for the gradient descent flow starting at 0 to reach $v$ without passing through any $\eps/(2\sqrt{d})$-stationary point.\footnote{Note that this does not mean that this is the case, just that it is possible. Conversely, if a point $v$ is invalid, then it is impossible for the gradient flow starting at $0$ to visit $v$ without encountering an $\eps/(2\sqrt{d})$-stationary point.} We define the set of \textit{invalid points} $\calI_{\gamma}$ to be $\calI_{\gamma} = G_{\gamma} \setminus \calV_{\gamma}$. Finally, for every $v \in G_{\gamma}$ we define $\Gamma(v)$ to be the set of \textit{immediate neighbors} of $v$ as follows
    \[ \Gamma(v) = \set{u \in G_{\gamma} \mid u \neq v, ~ \norm{u - v}_2 \le \gamma}. \]
    It is easy to see that $\abs{\Gamma(v)} \le 2 d$, where we have exact equality for all the points of the grid in the interior of the box $[-R, R]^d$ and inequality for the points on the boundary. From the definition of valid points we have the following claim.

    \begin{claim} \label{clm:validNeighbor}
      For all $v \in \calV_{\gamma}$ it holds that $\abs{\Gamma(v)} = 2 \cdot d$. (Or $v$ yields a counter-example to the boundedness of $f$.)
    \end{claim}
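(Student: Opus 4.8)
\textbf{Plan for the proof of Claim~\ref{clm:validNeighbor}.} The plan is to show that a valid point cannot lie on the boundary layer of the grid $G_\gamma$; once we know $v$ is an interior grid point, the equality $|\Gamma(v)| = 2d$ follows from an elementary geometric count. So I would proceed by contraposition on the first part: assume $v \in \calV_\gamma$ with $|\Gamma(v)| < 2d$, and derive a boundedness violation.

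First I would record the geometric fact about $\Gamma$. Writing $v = \gamma a$ with $a \in [[-m,m]]^d$, the grid points at Euclidean distance $\le \gamma$ from $v$ are exactly those differing from $v$ in a single coordinate by $\pm\gamma$ (differing in two or more coordinates gives distance $\ge \sqrt2\,\gamma > \gamma$, and differing in one coordinate by $\ge 2\gamma$ gives distance $\ge 2\gamma > \gamma$). Hence $\Gamma(v) = \{\,v \pm \gamma e_i : i \in [d]\,\} \cap G_\gamma$, and $|\Gamma(v)| = 2d$ precisely when $v$ is interior, i.e. when $\|a\|_\infty < m$. So $|\Gamma(v)| < 2d$ forces $\|a\|_\infty = m$, and therefore $\|v\|_2 \ge \|v\|_\infty = m\gamma = R$.

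Next I would plug the size of $R$ into the definition of validity. Since $v \in \calV_\gamma$, we have $f(v) \le f(0) - \tfrac{\eps}{2\sqrt d}\|v\|_2 \le f(0) - \tfrac{\eps}{2\sqrt d} R$. Using $R = m\gamma \ge 8\sqrt d\,B/\eps$ (which holds because $m = \lceil 8\sqrt d\,B/(\eps\gamma)\rceil$), this gives $\tfrac{\eps}{2\sqrt d} R \ge 4B$, so $f(v) \le f(0) - 4B$. If $f(0) \le B$ then $f(v) \le -3B < -B$, so $v$ witnesses $|f(v)| > B$, a boundedness violation; and if instead $f(0) > B$ then $0$ is already such a witness (this degenerate case can be disposed of by a single check of $f(0)$ at the start of the membership argument). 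Either way we have produced a boundedness counter-example, which proves the claim. The parenthetical ``or $v$ yields a counter-example'' is exactly this alternative.

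I do not expect any real obstacle here; the only point requiring a moment of care is the bookkeeping around $f(0)$ versus $B$ (making sure the counter-example is genuinely exhibited rather than merely asserted), and the verification that $R = m\gamma$ is large enough, both of which are immediate from the choices of $\gamma$ and $m$ fixed at the start of Appendix~\ref{app:inclusion}.
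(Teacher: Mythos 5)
Your proposal is correct and follows essentially the same route as the paper: boundary points of the grid have $\|v\|_2 \ge R \ge 8\sqrt{d}B/\eps$, so validity would force $f(v) \le f(0) - 4B \le -3B$, a boundedness violation, hence valid points are interior and have exactly $2d$ immediate neighbors. Your explicit handling of the degenerate case $f(0) > B$ and the geometric count of $\Gamma(v)$ are just slightly more careful bookkeeping of steps the paper leaves implicit.
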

    \begin{proof}
      To show this we only need to argue that all the points of $G_{\gamma}$ that lie on the boundary of $[-R, R]^d$ are invalid. To see this observe that any point $w$ on the boundary of $[-R, R]^d$ satisfies that $\norm{w}_2 \ge R$. Therefore, for $w$ to be valid we need to have that $f(w) \le f(0) - (\eps/(2\sqrt{d})) R$ but $f(0) \le B$. This means that we need $f(w) \le B - (\eps/(2\sqrt{d})) R$ which, since $R \geq 8\sqrt{d}B/\eps$ by definition, implies $f(w) \le B - 4B = -3B$. But if $f(w) \notin [-B, B]$, then $w$ can be used as a violation solution, since it violates the boundedness of $f$.
    \end{proof}
    \medskip
    
    \noindent \textbf{The potential function $\boldsymbol{P}$.} We start by defining the function $P$. For the valid points we define $P(v)$ to be the output of the Turing machine $\calC_f$ on $v$, i.e., $P(v) = f(v)$. For any invalid point $v \in \calI_{\gamma}$, we define $P(v) = B + 1$.
    \medskip
    
    \noindent \textbf{The neighbor function $\boldsymbol{N}$.} For every $v \in \calI_{\gamma}$ we define $N(v) = 0$. Since $P(v) = B + 1$ and $0$ is valid by definition, this means that $P(0) < P(v)$ and hence none of the invalid points can satisfy $P(N(v)) \ge P(v)$. Next we define the function $N$ on the valid points. We define $N(v)$ to be the immediate neighbor of $v$ that has minimum value of $P(v)$, i.e.,
    \[ N(v) = \argmin_{w \in \Gamma(v)} P(w). \]
    
    \medskip
    
    It remains to show that any solution of the \textsc{LocalOpt} instance with inputs $P$, $N$ as defined above, yields a solution to the \stationary/ problem. Let $v \in G_\gamma$ be such that $\|\nabla f(v)\|_2 > \eps$. We will show that $v$ cannot be a solution of the \textsc{LocalOpt} instance. In other words, we will show that $P(N(v)) < P(v)$. We have already noted above that if $v$ is invalid, it cannot be a solution to \textsc{LocalOpt}. Thus, we may also assume that $v$ is valid. Since $\|\nabla f(v)\|_2 > \eps$, there exist $i \in [d]$ and $s \in \{0,1\}$ such that $\langle \nabla f(v), (-1)^s e_i \rangle \leq -\eps/\sqrt{d}$. Consider the point $w = v + \gamma \cdot (-1)^s e_i$. Note that by Claim~\ref{clm:validNeighbor}, $w \in \Gamma(v)$, i.e., $w$ is an immediate neighbor of $v$. If Taylor's theorem does not hold for $v, w$, then we have found a violation solution. If, on the other hand, it holds, then we can write
    \begin{equation*}
    \begin{split}
    f(w) \leq f(v) + \langle \nabla f(v), w-v \rangle + \frac{L}{2}\|w-v\|_2^2 &= f(v) + \gamma \langle \nabla f(v), (-1)^s e_i \rangle + \frac{L \gamma^2}{2}\|(-1)^s e_i\|_2^2\\
    &\leq f(v) -\frac{\eps \gamma}{\sqrt{d}} + \frac{L \gamma^2}{2}\\
    &= f(v) -\frac{\eps \gamma}{2\sqrt{d}}\\
    &= f(v) -\frac{\eps}{2\sqrt{d}} \|w-v\|_2
    \end{split}
    \end{equation*}
    where we used $\gamma = \eps/(\sqrt{d}L)$. From this inequality we can already deduce that $f(w) < f(v)$. Furthermore, using the fact that $v$ is valid, and thus $f(v) \leq f(0) - \eps/(2\sqrt{d}) \cdot \|v\|_2$, we also deduce that $f(w) \leq f(0) - \eps/(2\sqrt{d}) \cdot \|w\|_2$. In other words, $w$ is also valid and thus $P(w) = f(w) < f(v) = P(v)$. It remains to show that $P(N(v)) \leq P(w)$. But this follows from the definition of $N$ and the fact that $w \in \Gamma(v)$.

    To conclude the reduction we need to transform $N$, $P$ from Turing machines to boolean circuits and their domain and range to $[2^n]$ for some $n \in \N$. First, observe that the binary representation of the points in $v \in G_{\gamma}$ is $\len(v) \le d \cdot \log(R/\gamma)$. Hence, the output $f(v)$ for any $v \in G_{\gamma}$ has $\len(f(v)) = q(\len(v))$ where $q$ is some polynomial that is specified together with the description of $\calC_f$. We can then pick $n = q(\len(v))$ and we can map the set $[2^n]$ to describe both $G_{\gamma}$ and the possible outputs of the Turing machine $P$ that we described above. This way we can make $N$, $P$ to be mappings from $[2^n]$ to itself and have running time $r(n)$ for some polynomial $r$. We can now use classical transformations of Turing machines with running time $r(n)$ to boolean circuits with size $r(n) \log (r(n))$. This completes the reduction of \stationary/ to \textsc{LocalOpt}. Based on the above, it is also easy to see that the reduction is polynomial-time in the input, i.e., the size of the Turing machines $\calC_f$, $\calC_{\nabla f}$, and polylogarithmic in the parameters $B, L$, and $1/\eps$.

\section{$\PLS$-Hardness Proof -- Proof of Lemma \ref{lem:hardness}} \label{app:hardness}

\noindent We prove the hardness result by constructing a function $f : \R^2 \to \R$ where finding approximate stationary points is $\PLS$-complete. For $d > 2$ we can use the function $f$ below and define a function $h : \R^d \to \R$ as $h(x) = f(x_1, x_2)$ and it is easy to see that finding approximate stationary points for $h$ is as hard as finding approximate stationary points for $f$. Hence, our hardness holds for any $d \ge 2$.

\noindent We describe the hardness construction in a top-down way. We start with how we use some \textit{tiles} to cover the whole plane $\R^2$, then we describe the structure of these tiles, and how they contain what we call a \textbf{PLS Box} and then we describe the structure of the PLS Box. To complete our proof we need to make sure that we do not create any stationary points in places that do not correspond to a solution of the $\PLS$ instance that we want to solve. This last step is not too complicated, but rather tedious because it involves checking many small parts of our construction.

\subsection{Periodic Structure of $f$} \label{sec:periodic}

We need to define the function $f$ over the whole plane $\R^2$ and we want to make sure that the value of $f$ remains bounded. In order to satisfy this property, we define $f$ as a periodic function. More formally we will design some function $g : [0, M]^2 \to \R$ for some \textbf{odd}\footnote{We need to pick $M$ odd to make sure that there is an even number of natural numbers in the interval $[0, M]$.} natural number $M$ and then we define $f$ as:
\begin{align}
  f(x, y) = g\p{x - M \cdot \floor{\frac{x}{M}}, y - M \cdot \floor{\frac{y}{M}}} \label{eq:definitionPeriodic}
\end{align}
where $\floor{x}$ denotes the integer part of $x$ and hence $x - M \floor{\frac{x}{M}}$ represents the ``modulo $M$'' operator over the reals. Of course, in order for this construction to be correct we need to make sure that on the boundary of $[0, M]^2$ the different copies of the function $g$ touch in a consistent way. In particular, we need that 
\begin{align}
  g(x, 0) = g(x, M) \quad \text{ and } \quad g(0, y) = g(M, y) \quad \text{ for all } x, y \in [0, M]. \label{eq:boudaryConditions}
\end{align}

\begin{align}
  \nabla g(x, 0) = \nabla g(x, M) \quad \text{ and } \quad \nabla g(0, y) = \nabla g(M, y) \quad \text{ for all } x, y \in [0, M]. \label{eq:boudaryConditions:Gradient}
\end{align}

\subsection{Defining $g$ on a Grid} \label{sec:grid}

The next step is to design the function $g$. Because we start from a combinatorial problem, i.e., \iter/, and our goal is to construct a continuous function it is very helpful to design $g$ in two steps: first we define the values and the gradients of $g$ on a discrete grid that is a subset of the square $[0, M]^2$ and then we use an interpolation, described in Section \ref{sec:interpolation}, to define $g$ in the rest of $[0, M]^2$. The grid that we use is the following:
\begin{align}
  G_{M} = \set{(a, b) \mid a, b \in \N, a, b \in [0, M]} \triangleq [[0, M]]^2 \label{eq:definitionGrid}.
\end{align}  
Our construction starts with defining function values $g(a, b)$ and gradient values $\nabla g(a, b)$ for the points $(a, b) \in G_M$ of the grid $G_M$. Outside the grid, the values of $g$ are defined via interpolation as we describe in the next section.

\subsection{Bi-cubic Interpolation} \label{sec:interpolation}

Given the function and gradient values of $g$ on $G_M$ we apply \textit{bi-cubic interpolation} in every \textit{small box} of $G_M$ to define $g$ everywhere in $[0, M]^2$. Consider a small box $[a, a + 1] \times [b, b + 1]$, with $(a, b) \in G_M$. If we have the function and the gradient values of $g$ for all the four corners $(a, b)$, $(a, b + 1)$, $(a + 1, b)$, $(a + 1, b + 1)$ then we can define $g$ in every point of the small box $[a, a + 1] \times [b, b + 1]$ using a polynomial of the form:
  \begin{equation} 
    g(x, y) = \sum_{i=0}^3 \sum_{j=0}^3 a_{ij} \cdot (x - a)^i \cdot (y - b)^j \label{eq:bicubic}
  \end{equation}
where the coefficients $a_{ij}$ are computed as follows
\begin{align}\label{eq:bicubic-coefficients}
  &\begin{bmatrix}
    a_{00} & a_{01} & a_{02} & a_{03}\\
    a_{10} & a_{11} & a_{12} & a_{13}\\
    a_{20} & a_{21} & a_{22} & a_{23}\\
    a_{30} & a_{31} & a_{32} & a_{33}\\
   \end{bmatrix}\\
  &=
  \begin{bmatrix}
    1 & 0 & 0 & 0\\
    0 & 0 & 1 & 0\\
    -3 & 3 & -2 & -1\\
    2 & -2 & 1 & 1\\
  \end{bmatrix} \cdot 
  \begin{bmatrix}
    g(a, b) & g(a, b') & g_y(a, b) & g_y(a, b') \\
    g(a', b) & g(a', b') & g_y(a', b) & g_y(a', b') \\
    g_x(a, b) & g_x(a, b') & 0 & 0 \\
    g_x(a', b) & g_x(a', b') & 0 & 0 \\
  \end{bmatrix} \cdot
  \begin{bmatrix}
    1 & 0 & -3 & 2\\
    0 & 0 & 3 & -2\\
    0 & 1 & -2 & 1\\
    0 & 0 & -1 & 1\\
  \end{bmatrix}\nonumber
\end{align}
where $a' := a+1$, $b' := b+1$, and where $g_x$ and $g_y$ denote the partial derivatives of $g$ with respect to $x$ and $y$ respectively. It is well-known that the above interpolation yields a function that is bounded, Lipschitz, and smooth, e.g., see \cite{russell1995polynomial}. The bound on the function value and the Lipschitz constant are determined from the specified function and gradient values on the corner of the corresponding cell as we will see in Section \ref{sec:Lipschitzness}.

From now on we will focus on the definition of $g$ on the grid points $G_M$ and when we finish the description of this we will come back to the properties of bi-cubic interpolation to complete our construction.

\subsection{Structure of the function $g$ on $G_M$} \label{sec:structureOfG}

As we explained we pick $M$ to be an odd number so that in the interval $[0, M]$ there is an even number of natural numbers. We define $K = M/2$ (observe that $K$ is not an integer!) and $M = 3N + 4$ or in other words $N = (M - 4)/3$. For now we will keep $N$ as a parameter and we will pick a specific value of $N$ when we construct the $\PLS$ instance inside $g$. 

The definition of $g$ on $G_M$ contains \textbf{eight} different regions. We represent $5$ of them with different colors, one of them corresponds to the top 2-rows of $G_M$ and is there to satisfy the boundary conditions \ref{eq:boudaryConditions}, and we defer the definition of the last two for Section \ref{sec:PLSBoxes}. For each colored region we specify the following thing: (a) its location, (b) how the function values are defined in the discrete points of $G_M$ in this region, and (c) how the gradient values are defined in the discrete points of $G_M$ in this region.  The values of the gradient that we use are either $(-\delta, 0)$ or $(0, -\delta)$ with $\delta = 1/2$. We use the vector multiplier $\delta$ here because it makes some arguments easier.
  
To be more precise, our construction involves the following regions: \textcolor{blueDark}{\textbf{dark blue region}}, \textcolor{redDark}{\textbf{dark red region}}, \textcolor{blueContr}{\textbf{light blue region}}, \textcolor{redContr}{\textbf{light red region}}, \textcolor{black}{\textbf{background}}, \textcolor{black}{\textbf{top region}}, \textcolor{black}{\textbf{PLS Box (A)}}, and \textcolor{black}{\textbf{PLS Box (B)}} as shown in Figure \ref{fig:regionsLocations}. Our goal is to first define all the regions except the PLS Boxes and show that these regions do not contain any stationary point. Then, in Section \ref{sec:PLSBoxes}, we define the structure of the PLS boxes and how we can encode a PLS instance in them. e finally then show that solutions can occur only in places that correspond to solutions of the initial PLS instance.

\begin{figure}
    \centering
    \subfigure[The locations of the eight regions that we use to define $g$ on $G_M$.]{\label{fig:regionsLocations} \includegraphics[scale=0.9]{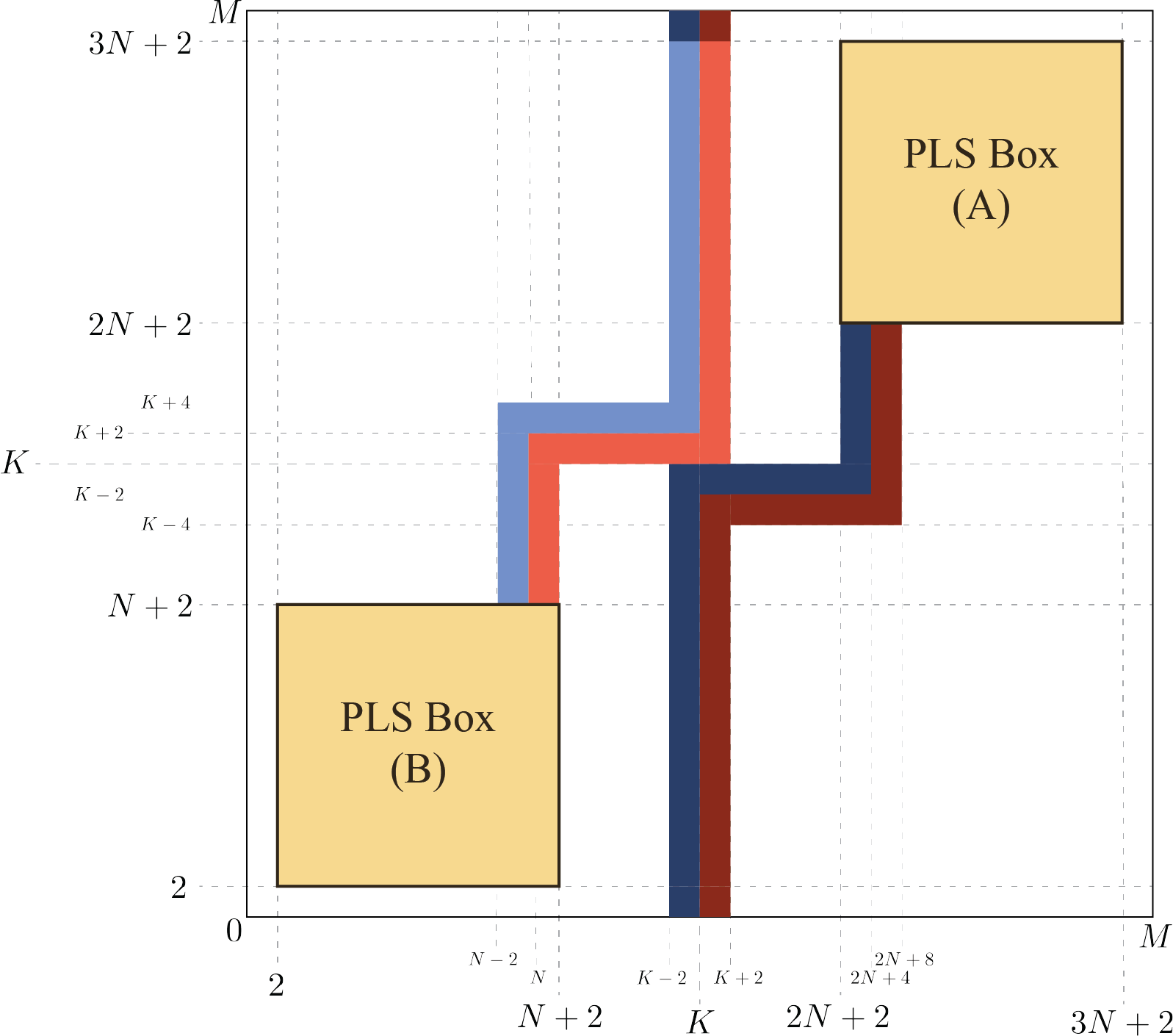}} \\[6pt]
    \subfigure[The arrows indicate the direction in which the function values decrease in every region except the PLS boxes.]{\label{fig:functionValues} \includegraphics[width=0.48\textwidth]{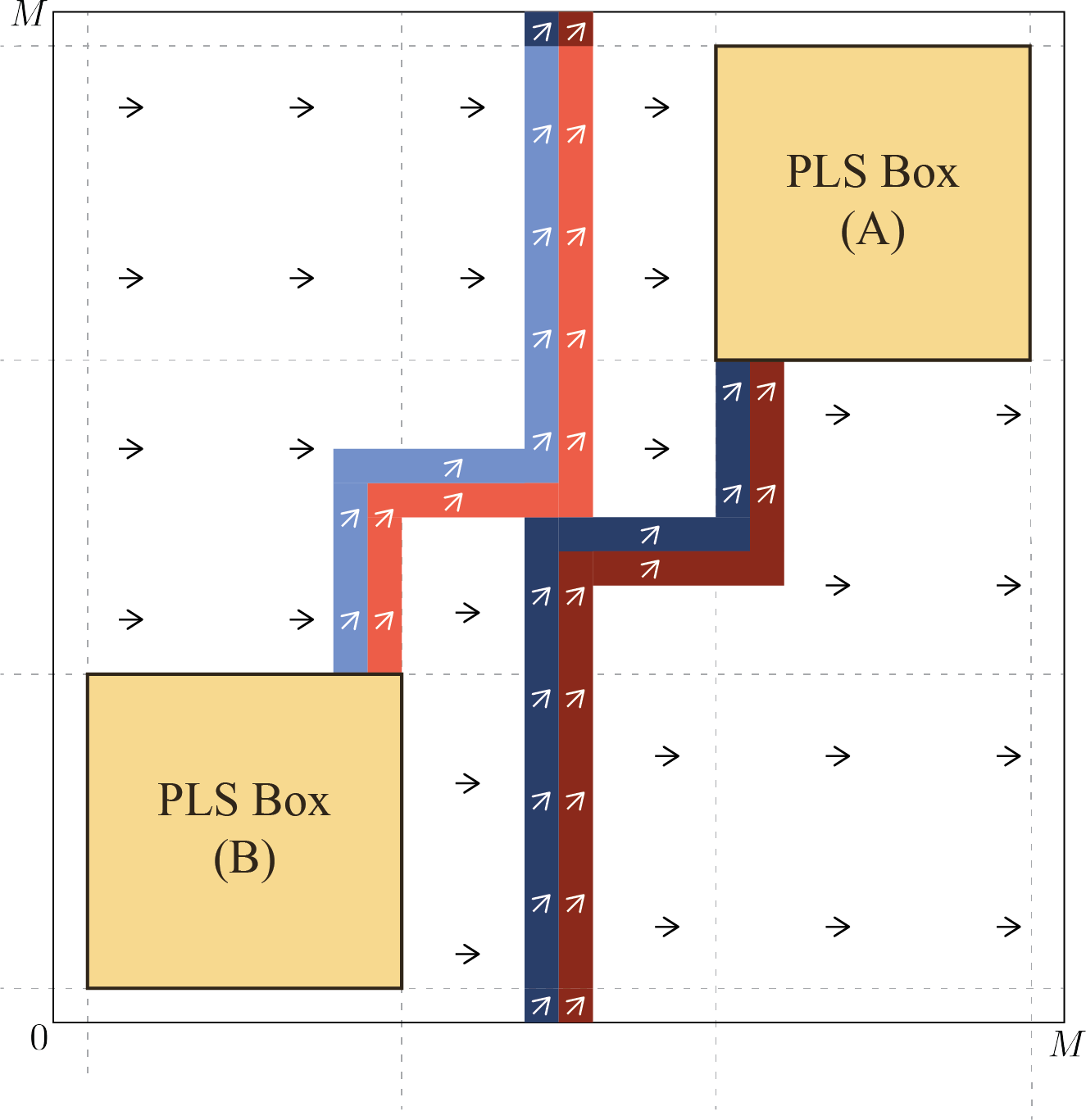}} ~
    \subfigure[The arrows indicate the direction of the negative gradient in every region except the PLS boxes.]{\label{fig:gradientValues} \includegraphics[width=0.48\textwidth]{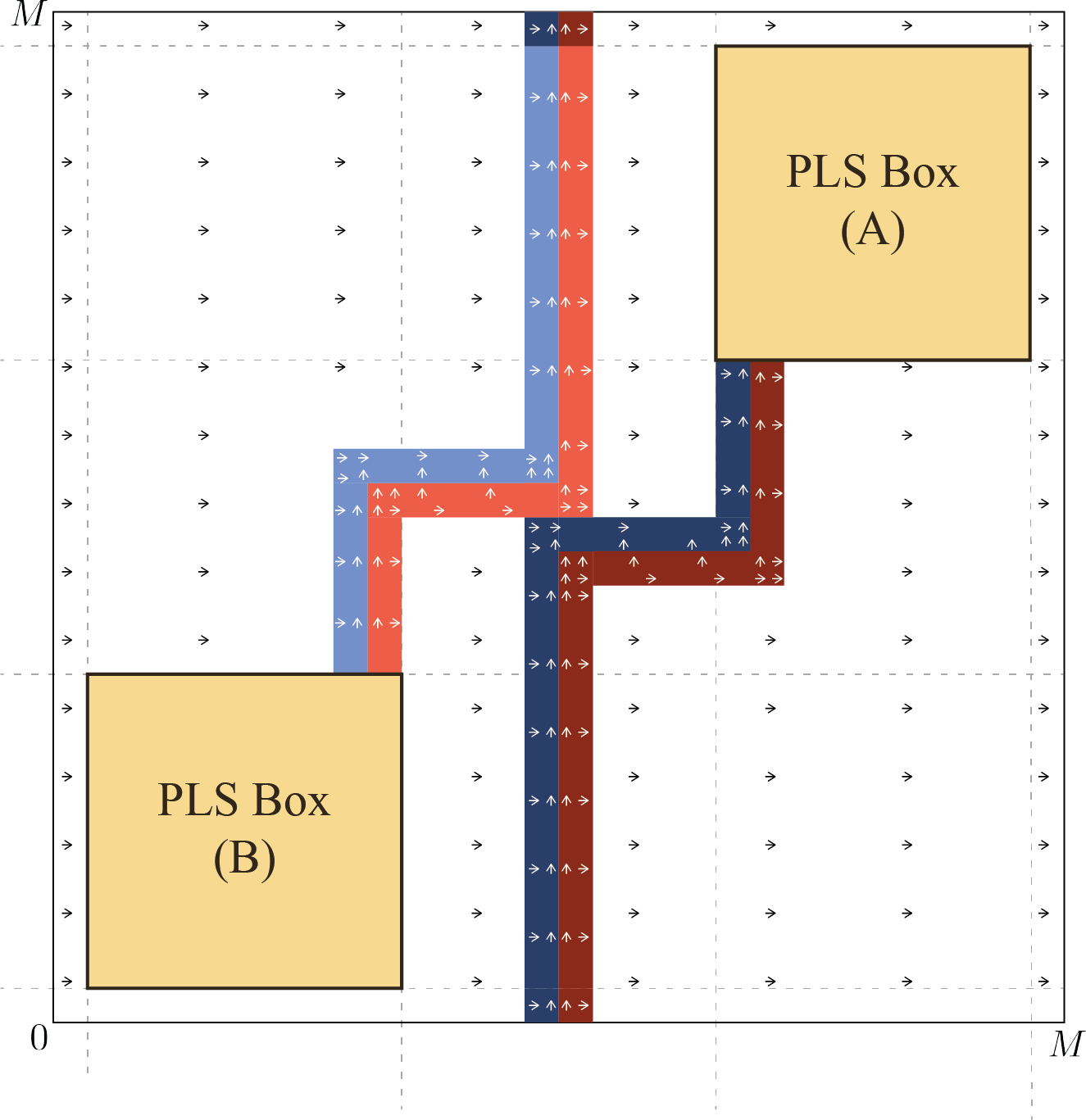}}
    \caption{The location, direction that the function values decrease and directions of the negative gradient for the regions that define $g$ on $G_M$.}
  \end{figure}

  \begin{figure}
    \centering
    \includegraphics[width=0.5\textwidth]{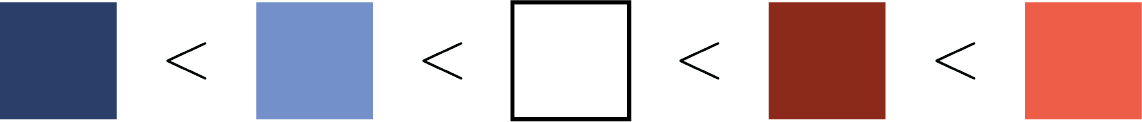}
    \caption{The relative order of function values in different regions. The middle white color with black boundary corresponds to the function values of the background region.}
    \label{fig:relativeOrder}
  \end{figure}
  
  \medskip
  
  \noindent \textcolor{blueDark}{\textbf{Dark Blue Region.}} We denote this region by $\calD_B$ and has the following specifications:
  \begin{enumerate}
    \item[\textbf{(a)}] \textbf{Location.} As shown in Figure \ref{fig:regionsLocations}, the dark blue region contains the following three segments of points, each of which as we can see has width $2$, two of these are vertical segments and one is horizontal. In particular, $\calD_B$ contains the following points:
    \begin{itemize}
      \item[$\triangleright$] The points $(a, b) \in G_M$ such that $K - 2 \le a \le K$ and $0 \le b \le K$.
      \item[$\triangleright$] The points $(a, b) \in G_M$ such that $K - 2 \le a \le 2 N + 4$ and $K - 2 \le b \le K$.
      \item[$\triangleright$] The points $(a, b) \in G_M$ such that $2 N + 2 \le a \le 2 N + 4$ and $K \le b \le 2 N + 2$.
    \end{itemize}
    
    \item[\textbf{(b)}] \textbf{Function Values.} We define the following function
    \begin{align}
      h_{DB}(x, y) = -x - y - 6 \cdot M \label{eq:darkBlueFunctionValueDefinition}
    \end{align}
    and for every $(a, b) \in \calD_B$ we set $g(a, b) = h_{DB}(a, b)$. As shown in Figure \ref{fig:functionValues} the function values inside the dark blue region decrease as both of the coordinates $x$ and $y$ increase. Another property of dark blue region, that will be clear once we define all the regions, is that it has the smallest values among all the regions of $g$ as shown in Figure \ref{fig:relativeOrder}.

    \item[\textbf{(c)}] \textbf{Gradient Values.} The gradient values in the dark blue region is either $(-\delta, 0)$ or $(0, -\delta)$. As we explained in (a) the dark blue region contains two vertical segments and one horizontal segment each of which has length $2$. For the vertical segments the left part has gradient $(-\delta, 0)$ and the right $(0, -\delta)$. For the horizontal segment the top part has gradient $(-\delta, 0)$ and the bottom $(0, -\delta)$. More precisely we have that:
    \begin{itemize}
      \item[$\blacktriangleright$] For the points $(a, b) \in \calD_B$ it holds that  $\nabla g(a, b) = (-\delta, 0)$ in the following cases:
      \begin{itemize}
        \item[$\triangleright$] $a = K - 3/2$ and $0 \le b \le K$,
        \item[$\triangleright$] $K - 2 \le a \le 2 N + 3$ and $b = K - 1/2$,
        \item[$\triangleright$] $a = 2 N + 5/2$ and $K \le b \le 2N + 2$.
      \end{itemize}

      \item[$\blacktriangleright$] For the points $(a, b) \in \calD_B$ it holds that  $\nabla g(a, b) = (0, -\delta)$ in the following cases:
      \begin{itemize}
        \item[$\triangleright$] $a = K - 1/2$ and $0 \le b \le K - 1$,
        \item[$\triangleright$] $K - 1 \le a \le 2 N + 4$ and $b = K - 3/2$,
        \item[$\triangleright$] $a = 2 N + 7/2$ and $K - 1 \le b \le 2N + 2$.
      \end{itemize}
    \end{itemize}
    In Figure \ref{fig:gradientValues} we show the direction of the negative gradient for all the regions.
  \end{enumerate}
\medskip

\noindent \textcolor{redDark}{\textbf{Dark Red Region.}} We denote this region by $\calD_R$ and has the following specifications:
  \begin{enumerate}
    \item[\textbf{(a)}] \textbf{Location.} As shown in Figure \ref{fig:regionsLocations}, the dark red region contains the following three segments of points, each of which as we can see as width $2$, two of these are vertical segments and one is horizontal. In particular, $\calD_R$ contains the following points:
    \begin{itemize}
      \item[$\triangleright$] The points $(a, b) \in G_M$ such that $K \le a \le K + 2$ and $0 \le b \le K - 2$.
      \item[$\triangleright$] The points $(a, b) \in G_M$ such that $K \le a \le 2 N + 4$ and $K \le b \le K + 2$.
      \item[$\triangleright$] The points $(a, b) \in G_M$ such that $2 N + 4 \le a \le 2 N + 6$ and $K - 4 \le b \le 2 N + 2$.
    \end{itemize}
    
    \item[\textbf{(b)}] \textbf{Function Values.} We define the following function
    \begin{align}
      h_{DR}(x, y) = -x - y + 3 \cdot M \label{eq:darkRedFunctionValueDefinition}
    \end{align}
    and for every $(a, b) \in \calD_R$ we set $g(a, b) = h_{DR}(a, b)$. As shown in Figure \ref{fig:functionValues} the function values inside the dark red region decrease as both of the coordinates $x$ and $y$ increase. Another property of dark red region, that will be clear once we define all the regions, is that it has the value greater than both of the blue regions and the background and has value smaller that the light red region as shown in Figure \ref{fig:relativeOrder}.

    \item[\textbf{(c)}] \textbf{Gradient Values.} The gradient values in the dark red region is either $(-\delta, 0)$ or $(0, -\delta)$. As we explained in (a) the dark red region contains two vertical segments and one horizontal segment each of which has length $2$. For the vertical segments the left part has gradient $(0, -\delta)$ and the right $(-\delta, 0)$. For the horizontal segment the top part has gradient $(0, -\delta)$ and the bottom $(-\delta, 0)$. More precisely we have that:
    \begin{itemize}
      \item[$\blacktriangleright$] For the points $(a, b) \in \calD_R$ it holds that  $\nabla g(a, b) = (-\delta, 0)$ in the following cases:
      \begin{itemize}
        \item[$\triangleright$] $a = K + 3/2$ and $0 \le b \le K - 3$,
        \item[$\triangleright$] $K + 1 \le a \le 2 N + 5$ and $b = K - 7/2$,
        \item[$\triangleright$] $a = 2 N + 11/2$ and $K - 4 \le b \le 2N + 2$.
      \end{itemize}

      \item[$\blacktriangleright$] For the points $(a, b) \in \calD_R$ it holds that  $\nabla g(a, b) = (0, -\delta)$ in the following cases:
      \begin{itemize}
        \item[$\triangleright$] $a = K + 1/2$ and $0 \le b \le K - 2$,
        \item[$\triangleright$] $K \le a \le 2 N + 5$ and $b = K - 5/2$,
        \item[$\triangleright$] $a = 2 N + 9/2$ and $K - 3 \le b \le 2N + 2$.
      \end{itemize}
    \end{itemize}
    In Figure \ref{fig:gradientValues} we show the direction of the negative gradient for all the regions.
  \end{enumerate}
\medskip

\noindent \textcolor{blueContr}{\textbf{Light Blue Region.}} We denote this region by $\calL_B$ and has the following specifications:
  \begin{enumerate}
    \item[\textbf{(a)}] \textbf{Location.} As shown in Figure \ref{fig:regionsLocations}, the light blue region contains the following three segments of points, each of which as we can see as width $2$, two of these are vertical segments and one is horizontal. In particular, $\calL_B$ contains the following points:
    \begin{itemize}
      \item[$\triangleright$] The points $(a, b) \in G_M$ such that $K - 2 \le a \le K$ and $K + 2 \le b \le M$.
      \item[$\triangleright$] The points $(a, b) \in G_M$ such that $N - 2 \le a \le K$ and $K + 2 \le b \le K + 4$.
      \item[$\triangleright$] The points $(a, b) \in G_M$ such that $N - 2 \le a \le N$ and $N + 2 \le b \le K + 4$.
    \end{itemize}
    
    \item[\textbf{(b)}] \textbf{Function Values.} We define the following function
    \begin{align}
      h_{LB}(x, y) = - x - y - 3 \cdot  M \label{eq:lightBlueFunctionValueDefinition}
    \end{align}
    and for every $(a, b) \in \calL_B$ we set $g(a, b) = h_{LB}(a, b)$. As shown in Figure \ref{fig:functionValues} the function values inside the light blue region decrease as both of the coordinates $x$ and $y$ increase. Another property of light blue region, that will be clear once we define all the regions, is that it has the value greater than the dark blue region but smaller than both of the red regions and the background as shown in Figure \ref{fig:relativeOrder}.

    \item[\textbf{(c)}] \textbf{Gradient Values.} The gradient values in the light blue region is either $(-\delta, 0)$ or $(0, -\delta)$. As we explained in (a) the light blue region contains two vertical segments and one horizontal segment each of which has length $2$. For the vertical segments the left part has gradient $(-\delta, 0)$ and the right $(0, -\delta)$. For the horizontal segment the top part has gradient $(-\delta, 0)$ and the bottom $(0, -\delta)$. More precisely we have that:
    \begin{itemize}
      \item[$\blacktriangleright$] For the points $(a, b) \in \calL_B$ it holds that  $\nabla g(a, b) = (-\delta, 0)$ in the following cases:
      \begin{itemize}
        \item[$\triangleright$] $a = K - 3/2$ and $K + 3 \le v \le M - 2$,
        \item[$\triangleright$] $N - 2 \le a \le K - 1$ and $b = K + 7/2$,
        \item[$\triangleright$] $a = N - 3/2$ and $N + 2 \le b \le K + 4$.
      \end{itemize}

      \item[$\blacktriangleright$] For the points $(a, b) \in \calL_B$ it holds that  $\nabla g(a, b) = (0, -\delta)$ in the following cases:
      \begin{itemize}
        \item[$\triangleright$] $a = K - 1/2$ and $K + 2 \le b \le M$,
        \item[$\triangleright$] $N - 1 \le a \le K$ and $b = K + 5/2$,
        \item[$\triangleright$] $a = N - 1/2$ and $N + 2 \le b \le K + 1$.
      \end{itemize}
    \end{itemize}
    In Figure \ref{fig:gradientValues} we show the direction of the negative gradient for all the regions.
  \end{enumerate}
\medskip

\noindent \textcolor{redContr}{\textbf{Light Red Region.}} We denote this region by $\calL_R$ and has the following specifications:
  \begin{enumerate}
    \item[\textbf{(a)}] \textbf{Location.} As shown in Figure \ref{fig:regionsLocations}, the light red region contains the following three segments of points, each of which as we can see as width $2$, two of these are vertical segments and one is horizontal. In particular, $\calL_R$ contains the following points:
    \begin{itemize}
      \item[$\triangleright$] The points $(a, b) \in G_M$ such that $K \le a \le K + 2$ and $K \le b \le M$.
      \item[$\triangleright$] The points $(a, b) \in G_M$ such that $N \le a \le K + 2$ and $K \le b \le K + 2$.
      \item[$\triangleright$] The points $(a, b) \in G_M$ such that $N \le a \le N + 2$ and $N + 2 \le b \le K + 2$.
    \end{itemize}
    
    \item[\textbf{(b)}] \textbf{Function Values.} We define the following function
    \begin{align}
      h_{LR}(x, y) = - x - y + 6 \cdot M \label{eq:lightRedFunctionValueDefinition}
    \end{align}
    and for every $(a, b) \in \calL_R$ we set $g(a, b) = h_{LR}(a, b)$. As shown in Figure \ref{fig:functionValues} the function values inside the light red region decrease as both of the coordinates $x$ and $y$ increase. Another property of light red region, that will be clear once we define all the regions, is that it has the value greater than all the other regions as shown in Figure \ref{fig:relativeOrder}.

    \item[\textbf{(c)}] \textbf{Gradient Values.} The gradient values in the light red region is either $(-\delta, 0)$ or $(0, -\delta)$. As we explained in (a) the light red region contains two vertical segments and one horizontal segment each of which has length $2$. For the vertical segments the left part has gradient $(0, -\delta)$ and the right $(-\delta, 0)$. For the horizontal segment the top part has gradient $(0, -\delta)$ and the bottom $(-\delta, 0)$. More precisely we have that:
    \begin{itemize}
      \item[$\blacktriangleright$] For the points $(a, b) \in \calL_R$ it holds that  $\nabla g(a, b) = (-\delta, 0)$ in the following cases:
      \begin{itemize}
        \item[$\triangleright$] $a = K + 3/2$ and $K \le b \le M$,
        \item[$\triangleright$] $N + 1 \le a \le K + 2$ and $b = K + 1/2$,
        \item[$\triangleright$] $a = N + 3/2$ and $N + 2 \le b \le K + 1$.
      \end{itemize}

      \item[$\blacktriangleright$] For the points $(a, b) \in \calL_B$ it holds that  $\nabla g(a, b) = (0, -\delta)$ in the following cases:
      \begin{itemize}
        \item[$\triangleright$] $a = K + 1/2$ and $K + 1 \le b \le M$,
        \item[$\triangleright$] $N \le a \le K + 1$ and $b = K + 3/2$,
        \item[$\triangleright$] $a = N + 1/2$ and $N + 2 \le b \le K + 2$.
      \end{itemize}
    \end{itemize}
    In Figure \ref{fig:gradientValues} we show the direction of the negative gradient for all the regions.
  \end{enumerate}
\medskip

\noindent \textcolor{black}{\textbf{Background.}} We denote this region by $\calB$ and has the following specifications:
  \begin{enumerate}
    \item[\textbf{(a)}] \textbf{Location.} Background is shown in Figure \ref{fig:regionsLocations} with white and as we can see it contains all the points that are not in any other region.
    
    \item[\textbf{(b)}] \textbf{Function Values.} We define the following function
    \begin{align}
      h_{B}(x, y) = - x + \mathbf{1}\set{x \ge K} + M \label{eq:backgroundFunctionValueDefinition}
    \end{align}
    and for every $(a, b) \in \calB$ we set $g(a, b) = h_{B}(a, b)$. As shown in Figure \ref{fig:functionValues} the function values inside the background decrease as the $x$ coordinate increases and it is independent from the coordinate $y$. Another property of the background is that it has the value greater than the blue regions and smaller than the red regions as shown in Figure \ref{fig:relativeOrder}.

    \item[\textbf{(c)}] \textbf{Gradient Values.} The gradient values in the background is always $(-\delta, 0)$. In Figure \ref{fig:gradientValues} we show the direction of the negative gradient for all the regions.
  \end{enumerate}
\medskip

\noindent \textcolor{black}{\textbf{Top blue region.}} We denote this region by $\calT_B$ and has the following specifications:
  \begin{enumerate}
    \item[\textbf{(a)}] \textbf{Location.} The top region consists of the two top rows of $G_M$ and two of the middle columns, i.e., $(a, b) \in \calT_B$ if and only if $K - 2 \le a \le K$ and $M - 1 \le b \le M$.
    
    \item[\textbf{(b)}] \textbf{Function Values.} We define the following function
    \begin{align}
      h_{TB}(x, y) = - x - (y - M - 1) - 6 \cdot M \label{eq:topBlueFunctionValueDefinition}
    \end{align}
    and for every $(a, b) \in \calB$ we set $g(a, b) = h_{TB}(a, b)$. As shown in Figure \ref{fig:functionValues} the function values inside the background decrease as the $x$ coordinate increases and it is independent from the coordinate $y$. We treat this region as a dark blue region because they have a pretty minor difference in the values and for the purpose of all the proofs the behave exactly the same.

    \item[\textbf{(c)}] \textbf{Gradient Values.} The same as in the dark blue region: for $(a, b) \in \calT_B$, with $a = K - 1/2$ we have $\nabla g(a, b) = (0, -\delta)$ and for $(a, b) \in \calT_B$, with $a = K - 3/2$ we have $\nabla g(a, b) = (-\delta, 0)$.
  \end{enumerate}
\medskip

\noindent \textcolor{black}{\textbf{Top red region.}} We denote this region by $\calT_R$ and has the following specifications:
  \begin{enumerate}
    \item[\textbf{(a)}] \textbf{Location.} The top region consists of the two top rows of $G_M$ and two of the middle columns, i.e., $(a, b) \in \calT_R$ if and only if $K  \le a \le K + 2$ and $M - 1 \le b \le M$.
    
    \item[\textbf{(b)}] \textbf{Function Values.} We define the following function
    \begin{align}
      h_{TR}(x, y) = - x - (y - M - 1) + 3 \cdot M \label{eq:topRedFunctionValueDefinition}
    \end{align}
    and for every $(a, b) \in \calB$ we set $g(a, b) = h_{TR}(a, b)$. As shown in Figure \ref{fig:functionValues} the function values inside the background decrease as the $x$ coordinate increases and it is independent from the coordinate $y$. We treat this region as a dark red region because they have a pretty minor difference in the values and for the purpose of all the proofs the behave exactly the same.

    \item[\textbf{(c)}] \textbf{Gradient Values.} The same as in the dark blue region: for $(a, b) \in \calT_R$, with $a = K + 1/2$ we have $\nabla g(a, b) = (0, -\delta)$ and for $(a, b) \in \calT_R$, with $a = K + 3/2$ we have $\nabla g(a, b) = (-\delta, 0)$.
  \end{enumerate}
\medskip

\noindent \textcolor{black}{\textbf{PLS Boxes.}} We defer the discussion about PLS boxes for Section \ref{sec:PLSBoxes}.
\medskip

Now that we defined all the regions except from the PLS boxes we are ready to prove the following lemma.

\begin{lemma} \label{lem:noSolutionsLemma}
  After interpolating using the techniques discussed in Section \ref{sec:interpolation}, there is no $0.01$-stationary point outside the region of PLS Box (A) or PLS Box (B).
\end{lemma}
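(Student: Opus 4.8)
The plan is to argue that in each of the non-PLS-box regions, at every grid point $(a,b)$ the specified gradient has norm $\delta = 1/2 \gg 0.01$, and then to show that bi-cubic interpolation cannot create a new, spurious stationary point inside any small box $[a,a+1]\times[b,b+1]$ whose four corners all lie in these regions. So the first step is bookkeeping: go region by region (dark blue, dark red, light blue, light red, background, top blue, top red) and verify that every small box of $G_M$ is of one of finitely many ``types'' according to the pattern of the four corner gradients — either all four corners have gradient $(-\delta,0)$, or all four have $(0,-\delta)$, or the box straddles the boundary between a ``$(-\delta,0)$ strip'' and a ``$(0,-\delta)$ strip'' within a single region, or the box straddles the boundary between two different regions (e.g.\ background meets light blue, dark blue meets background, etc.). The second step is the core analytic claim: for each such box type, the bi-cubic interpolant $g$ of \eqref{eq:bicubic}--\eqref{eq:bicubic-coefficients} has $\|\nabla g(x,y)\|_2 > 0.01$ throughout the box.

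For boxes where all four corner gradients are equal, say all $(-\delta,0)$, I would exploit that the function values on the corners are of the affine form $-x - y + c$ or $-x + c$ (depending on the region), so the interpolant is itself \emph{exactly} that affine function (bi-cubic interpolation reproduces the data when the data come from such a low-degree polynomial with matching gradients), hence $\nabla g$ is constant and equal to $(-\delta,0)$ or $(-1,0)$ or $(-1,-1)$ — all of norm $\ge 1/2$. The interesting boxes are the ``transition'' boxes. Here the key observation to establish is a sign/monotonicity property: in every transition box one of the two partial derivatives $g_x$ or $g_y$ is bounded away from zero \emph{uniformly over the box}. Concretely, within a region the transition is always from a strip where $\nabla g = (-\delta,0)$ to an adjacent strip where $\nabla g=(0,-\delta)$, but in both strips the value is decreasing in both coordinates, and a one-variable analysis of the cubic along lines parallel to the transition direction shows that the relevant partial stays negative and of magnitude $\ge \delta - O(\text{something small})$; for region-to-region transitions, the ordering of function values established in Figure~\ref{fig:relativeOrder} (dark blue $<$ light blue $<$ background $<$ dark red $<$ light red) forces a large value gap across the box, so the directional derivative across it is large. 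I would package this as: for each transition-box type, identify a fixed direction $u\in\{e_1,e_2\}$ and show $|\langle \nabla g(x,y),u\rangle| > 0.01$ on the whole box by a one-dimensional cubic estimate (the cubic restricted to any axis-parallel segment is determined by two endpoint values and two endpoint derivatives, and we have explicit bounds on all four).

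The main obstacle will be the transition boxes where the \emph{corner function values are not globally affine} — i.e.\ boxes on the seam between two regions with different additive constants $c$ (like $-6M$ vs $-3M$ vs $+3M$ vs $+6M$). There the interpolant genuinely has a nontrivial cubic part, and one must check that the cubic's derivative does not vanish anywhere in the unit box. I expect to handle this by choosing the region boundaries (which I am free to do in the construction) so that across any seam the value jump has a \emph{consistent sign in one coordinate} and is large in magnitude compared to $1$ (the constants $3M, 6M$ are chosen precisely so that these gaps dominate), then invoking a crude bound: if $g$ restricted to a segment of length $1$ goes from value $v_0$ to $v_1$ with $|v_1 - v_0|$ large and endpoint derivatives of size $O(1)$, then $|g'|$ cannot be small everywhere on the segment (otherwise $\int_0^1 g' $ would be small). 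Making the ``$O(1)$'' precise and checking it against the threshold $0.01$ is the tedious part, but it is the standard kind of estimate that the paper already references for bi-cubic interpolation (\cite{russell1995polynomial}); the full case analysis is deferred to the later subsections, and here I would only state the organizing principle and the reduction to the finite list of box types. This establishes that the only grid boxes that can contain a stationary point are those with at least one corner in PLS Box (A) or (B), proving Lemma~\ref{lem:noSolutionsLemma}.
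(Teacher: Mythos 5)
There is a genuine gap in your first step. You claim that in a box where all four corner gradients are equal (say $(-\delta,0)$), the bi-cubic interpolant reproduces the affine function $-x-y+c$ (or $-x+c$) that generates the corner values. But this requires that the \emph{specified} corner gradients match the gradient of that affine function, and in the construction they do not: the function values in (e.g.) the dark blue region follow $h_{DB}(x,y)=-x-y-6M$, whose gradient is $(-1,-1)$, while the gradient prescribed at grid points is either $(-\delta,0)=(-1/2,0)$ or $(0,-\delta)=(0,-1/2)$. Likewise in the background the value follows $-x+c$ (gradient $(-1,0)$) but the prescribed gradient is $(-1/2,0)$. This mismatch is deliberate and essential to the construction. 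Consequently the bi-cubic interpolant inside such a box is a genuine nontrivial cubic, not the ambient affine function, and your conclusion that $\nabla g$ is constant inside the box (and hence has norm $\geq 1/2$) does not follow. The ``easy'' case of your taxonomy is therefore not easy at all; it needs exactly the same kind of explicit cubic estimate you defer to the transition boxes.

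Beyond that gap, the route you sketch differs from the paper's. The paper does not classify boxes by ``same vs.\ transition'' corner gradients and then run a one-dimensional monotonicity/value-gap estimate from scratch. Instead, it observes that bi-cubic interpolation commutes with a small set of symmetries (reflections across $x$, $y$, the diagonals, and negation of the data), reduces all occurring small boxes up to these symmetries to four canonical configurations (``Groups 1--4''), each characterized by both the arrow pattern \emph{and} explicit inequalities on the four corner values, and then invokes the verified fact (from \cite{FearnleyGHS22-gradient}, Lemma 4.3) that none of those four canonical interpolants has a $0.01$-stationary point in the unit cell. The bulk of the actual proof is then a careful, figure-by-figure audit (regions A through E) that every box encountered is symmetric to one of the four canonical boxes, plus a separate check of the periodic-boundary compatibility. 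Your proposal, even once the affine-reproduction step were repaired, would need (i) a way to handle the intentional gradient/value mismatch in non-transition boxes, and (ii) a rigorous version of the vague ``$|g'|$ cannot be small everywhere'' estimate in transition boxes with both the mismatched gradients and the varying value offsets --- which is essentially the content of the Group-1 through Group-4 verification you are trying to avoid.
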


  \begin{proof}

  In order to prove Lemma \ref{lem:noSolutionsLemma}, we will show that any small box that does not lie in a solution region, does not contain any $\varepsilon$-stationary point. The behaviour of the function in a given small box depends on the information we have about the four corners, namely the colours and arrows at the four corners, but also on the position of the box in our instance, since the value defined by a colour depends on the position. As in the proof of Lemma 4.3 of \cite{FearnleyGHS22-gradient}, for our proof, it is convenient to consider a box with the (colour and arrow) information about its four corners, but without any information about its position. Indeed, if we can show that a box does not contain any $\varepsilon$-stationary point using only this information, then this will always hold, wherever the box is positioned. As a result, we obtain a finite number of boxes (with colour and arrow information) that we need to check. Conceptually, this is a straightforward task: for each small box we get a set of cubic polynomials that could be generated by bicubic interpolation for that box, and we must prove that no polynomial in that set has an $\eps$-stationary point within the box. Unfortunately there is a big number of boxes that we need to check and for this reason it is convenient to cluster the possible instances that arise in four groups each of which is guaranteed not to have a solution. Every small box can be classified in one of the groups if after a combination of the following transformations we can produce exactly the directions of the arrows at the four corners of the small box that appear in the characteristic image of the group. The possible transformations are:
  \begin{itemize}
    \item[$\triangleright$] \emph{Reflection with respect to the $y$-axis.} Applying this transformation to a box has the following effect: the two corners at the top of the box now find themselves at the bottom of the box (and vice-versa) and the sign of the $y$-coordinate of each arrow is flipped. Using \cref{eq:bicubic,eq:bicubic-coefficients} one can check  that taking the bicubic interpolation of this reflected box yields the same result as taking the interpolation of the original box and then applying the reflection to the interpolated function. We can summarize this by saying that bicubic interpolation \emph{commutes} with this transformation.
    \item[$\triangleright$] \emph{Reflection with respect to the $x$-axis.} Similarly with the above reflection of the $y$-axis.
    \item[$\triangleright$] \emph{Reflection with respect to the axis $y=x$}, i.e., the diagonal through the box. This corresponds to swapping the corners $(0,1)$ and $(1,0)$ of the box, and additionally also swapping the $x$- and $y$-coordinate of the arrows at all four corners. Again, using \cref{eq:bicubic,eq:bicubic-coefficients} one can directly verify that this transformation also commutes with bicubic interpolation (where applying the transformation to the interpolated function $f$ corresponds to considering $(x,y) \mapsto f(y,x)$).
    \item[$\triangleright$] \emph{Reflection with respect to the axis $y=-x$.} Similarly with the above reflection with respect to the $y=-x$ axis.
    \item[$\triangleright$] \emph{Negation.} This corresponds to negating the values and arrows at the four corners, where ``negating an arrow'' just means replacing it by an arrow in the opposite direction. Using \cref{eq:bicubic,eq:bicubic-coefficients}, it is immediately clear that negation commutes with bicubic interpolation.
  \end{itemize}

  Since all five transformations commute with bicubic interpolation, this continues to hold for any more involved transformation that is constructed from these basic five. Furthermore, it is easy to see that the basic transformations do not introduce $\varepsilon$-stationary points. Indeed, if a function does not have any $\varepsilon$-stationary points, then applying any reflection or taking the negation cannot change that property. As a result, if two boxes are ``symmetric'' (i.e., one can be obtained from the other by applying a combination of the three basic transformations), then it is enough to verify that just one of these two boxes does not contain any $\varepsilon$-stationary points when we take the bicubic interpolation.

  Using this notion of symmetry, the boxes that need to be verified can be grouped into just four different groups, namely Groups 1 to 4 that we define below. These are the same four groups used in \citep{FearnleyGHS22-gradient}, where it was already shown that they do not contain any $0.01$-stationary points.

	\paragraph{\bf Group 1}
	
	This group contains all the boxes that are symmetric to a box of the following form:
	
	\begin{minipage}{0.6\textwidth}
		\begin{center}
			\resizebox{85pt}{!}{
				\begin{tikzpicture}[scale=5]
				\path (-0.07, -0.07) -- (0.27, 0.27);
				\draw[lightgray,dashed] (0.0,0.0) rectangle (0.2,0.2);
				\path[->,line width=1,black] (0.0, 0.0) edge (0.08, 0.0);
				\path[->,line width=1,black] (0.0, 0.2) edge (0.08, 0.2);
				\path[->,line width=1,black] (0.2, 0.0) edge (0.28, 0.0);
				\path[->,line width=1,black] (0.2, 0.2) edge (0.28, 0.2);
				\node [draw=black,fill=white,circle,inner sep=0,minimum size=9] at (0.0, 0.0)  {\scriptsize $c$};
				\node [draw=black,fill=white,circle,inner sep=0,minimum size=9] at (0.0, 0.2)  {\scriptsize $a$};
				\node [draw=black,fill=white,circle,inner sep=0,minimum size=9] at (0.2, 0.0)  {\scriptsize $d$};
				\node [draw=black,fill=white,circle,inner sep=0,minimum size=9] at (0.2, 0.2)  {\scriptsize $b$};
				\end{tikzpicture}
			}
		\end{center}
	\end{minipage} \hfill
	\begin{minipage}{0.4\textwidth}
		\begin{tabular}{ll}
			Conditions:\\
			~$a \geq b+1$\\
			~$c \geq d+1$\\
		\end{tabular}
	\end{minipage}
	where $a,b,c,d$ are the values at the four corners of the box as shown.
	
	\paragraph{\bf Group 2}
	
	This group contains all the boxes that are symmetric to a box of the following form:
	
	\begin{minipage}{0.6\textwidth}
		\begin{center}
			\resizebox{85pt}{!}{
				\begin{tikzpicture}[scale=5]
				\path (-0.07, -0.07) -- (0.27, 0.27);
				\draw[darkgray,dotted] (0.0,0.134) -- (0.2,0.134);
				\draw[lightgray,dashed] (0.0,0.0) rectangle (0.2,0.2);
				\path[->,line width=1,black] (0.0, 0.0) edge (0.0, 0.08);
				\path[->,line width=1,black] (0.0, 0.2) edge (0.08, 0.2);
				\path[->,line width=1,black] (0.2, 0.0) edge (0.2, 0.08);
				\path[->,line width=1,black] (0.2, 0.2) edge (0.28, 0.2);
				\node [draw=black,fill=white,circle,inner sep=0,minimum size=9] at (0.0, 0.0)  {\scriptsize $c$};
				\node [draw=black,fill=white,circle,inner sep=0,minimum size=9] at (0.0, 0.2)  {\scriptsize $a$};
				\node [draw=black,fill=white,circle,inner sep=0,minimum size=9] at (0.2, 0.0)  {\scriptsize $d$};
				\node [draw=black,fill=white,circle,inner sep=0,minimum size=9] at (0.2, 0.2)  {\scriptsize $b$};
				\end{tikzpicture}
			}
		\end{center}
	\end{minipage} \hfill
	\begin{minipage}{0.4\textwidth}
		\begin{tabular}{ll}
			\\
			Conditions:\\
			~$a \geq b+1$\\
			~$c \geq a+1$\\
			~$d \geq b+1$\\
			~$c \geq d-1$\\
			\\
		\end{tabular}
	\end{minipage}

	\paragraph{\bf Group 3}
	
	This group contains all the boxes that are symmetric to a box of the following form:
	
	\begin{minipage}{0.6\textwidth}
		\begin{center}
			\resizebox{85pt}{!}{
				\begin{tikzpicture}[scale=5]
				\path (-0.07, -0.07) -- (0.27, 0.27);
				\draw[darkgray,dotted] (0.0,0.134) -- (0.1,0.134) -- (0.1,0.2);
				\draw[lightgray,dashed] (0.0,0.0) rectangle (0.2,0.2);
				\path[->,line width=1,black] (0.0, 0.0) edge (0.0, 0.08);
				\path[->,line width=1,black] (0.0, 0.2) edge (0.08, 0.2);
				\path[->,line width=1,black] (0.2, 0.0) edge (0.2, 0.08);
				\path[->,line width=1,black] (0.2, 0.2) edge (0.2, 0.28);
				\node [draw=black,fill=white,circle,inner sep=0,minimum size=9] at (0.0, 0.0)  {\scriptsize $c$};
				\node [draw=black,fill=white,circle,inner sep=0,minimum size=9] at (0.0, 0.2)  {\scriptsize $a$};
				\node [draw=black,fill=white,circle,inner sep=0,minimum size=9] at (0.2, 0.0)  {\scriptsize $d$};
				\node [draw=black,fill=white,circle,inner sep=0,minimum size=9] at (0.2, 0.2)  {\scriptsize $b$};
				\end{tikzpicture}
			}
		\end{center}
	\end{minipage} \hfill
	\begin{minipage}{0.4\textwidth}
		\begin{tabular}{ll}
			\\
			Conditions:\\
			~$a \geq b+1$\\
			~$c \geq a+1$\\
			~$d \geq b+1$\\
			~$c \geq d-1$\\
			\\
		\end{tabular}
	\end{minipage}

	\paragraph{\bf Group 4}
	
	This group contains all the boxes that are symmetric to a box of the following form:
	
	\begin{minipage}{0.6\textwidth}
		\begin{center}
			\resizebox{85pt}{!}{
				\begin{tikzpicture}[scale=5]
				\path (-0.07, -0.07) -- (0.27, 0.27);
				\draw[darkgray,dotted] (0.0,0.067) -- (0.1,0.067) -- (0.1,0);
				\draw[lightgray,dashed] (0.0,0.0) rectangle (0.2,0.2);
				\path[->,line width=1,black] (0.0, 0.0) edge (0.08, 0.0);
				\path[->,line width=1,black] (0.0, 0.2) edge (0.0, 0.28);
				\path[->,line width=1,black] (0.2, 0.0) edge (0.2, 0.08);
				\path[->,line width=1,black] (0.2, 0.2) edge (0.2, 0.28);
				\node [draw=black,fill=white,circle,inner sep=0,minimum size=9] at (0.0, 0.0)  {\scriptsize $c$};
				\node [draw=black,fill=white,circle,inner sep=0,minimum size=9] at (0.0, 0.2)  {\scriptsize $a$};
				\node [draw=black,fill=white,circle,inner sep=0,minimum size=9] at (0.2, 0.0)  {\scriptsize $d$};
				\node [draw=black,fill=white,circle,inner sep=0,minimum size=9] at (0.2, 0.2)  {\scriptsize $b$};
				\end{tikzpicture}
			}
		\end{center}
	\end{minipage} \hfill
	\begin{minipage}{0.4\textwidth}
		\begin{tabular}{ll}
			\\
			Conditions:\\
			~$c \geq a+1$\\
			~$d \geq b+1$\\
			~$c \geq d+1$\\
			~$a \geq b-1$\\
			\\
		\end{tabular}
	\end{minipage}

Since only the directions of the gradients and the colors of the corners of the boxes are important there is only a finite number of small cells that we need to check. In particular, as we show in Figure \ref{fig:regions2Analyze}, in the interior of $[0, 1]^2$ there are only four places where different combinations of colors and directions of gradient arise:
\begin{enumerate}
  \item[A.] Space that only invoke the background region.
  \item[B.] The left turn of the regions with light color.
  \item[C.] The left turn of the regions with dark color.
  \item[D.] The right turns of both of the regions that also touch in this space.
   \item[E.] The place where the light and dark colors meet after we periodically repeat the construction of $[0, M]^2$.
\end{enumerate}

\begin{figure}[t]
    \centering
    \includegraphics[scale=0.7]{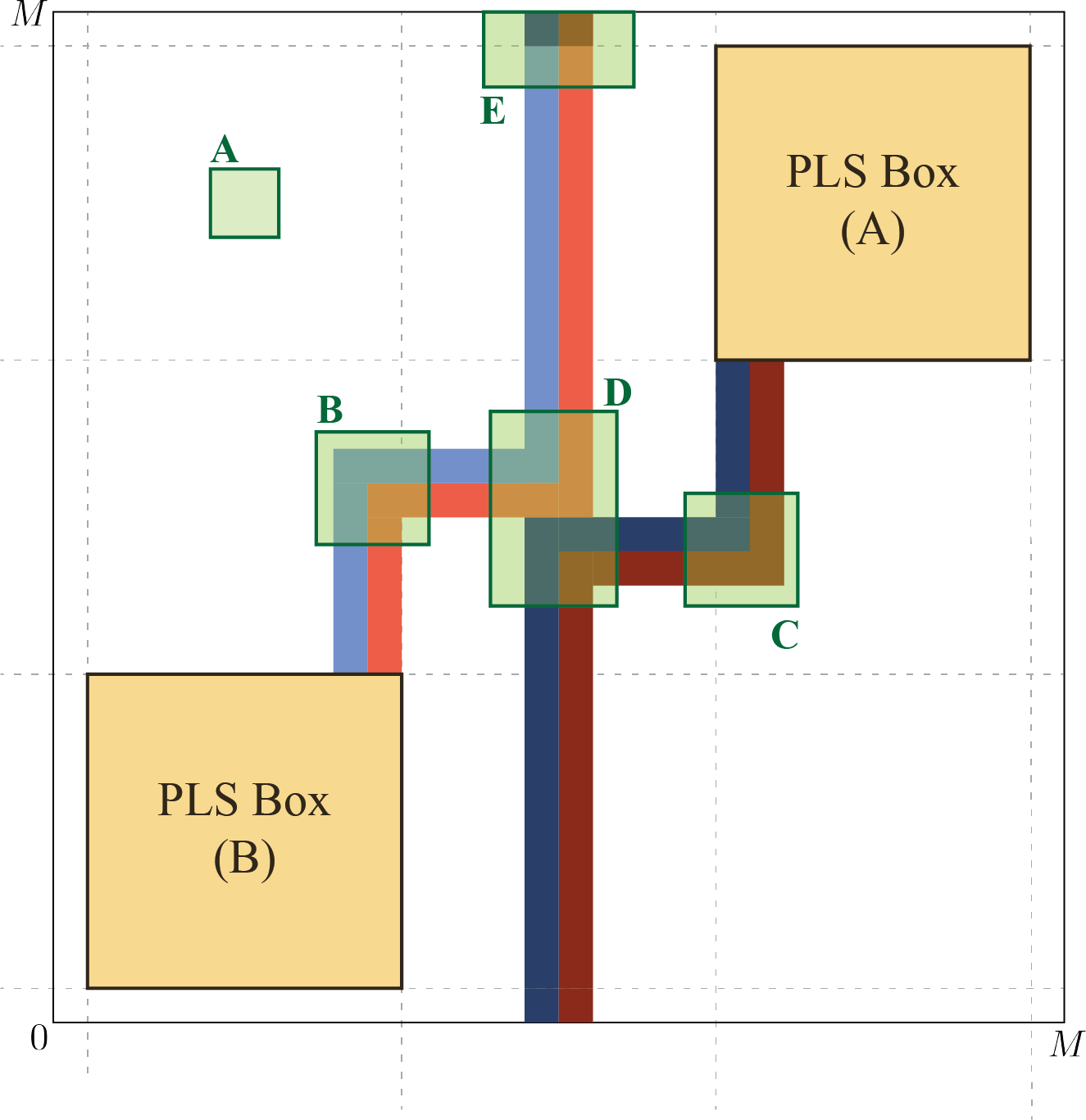}
    \caption{In this figure we indicate with green boxes the regions that we need to check to make sure that no stationary points are created. If we make sure that there are no stationary points in these regions then we can conclude that there are no stationary points anywhere outside the PLS boxes since all the possible patterns appear in these green boxes.}
    \label{fig:regions2Analyze}
\end{figure}

Apart from the interior we also need to make sure that no solutions arise in the boundary after we periodically repeat the $[0, M]^2$. For this observe that both the line $x = 0$ and the line $x = M$ belong to the background and for the background function $h_B$ it is easy to see that $h_B(0, y) = 0 = h_B(M, y)$. For the other direction we observe that the top and bottom rows have either the background, which does not depend on $y$ and hence obviously $h_B(x, 0) = h_B(x, M)$, or it consist of the dark blue and dark red regions and the top blue and top red regions. From the definition of the top blue and the top red regions we then get that $g(x, 0) = g(x, M)$ for all $x \in [0, M]$. Now it is easy to see that using the same argument we can show that $\nabla g(a, 0) = \nabla g(a, M)$ and $\nabla g(0, b) = \nabla g(M, b)$ for all $(a, b) \in G_M$. To extend this property to all the real numbers $x, y \in [0, M]$ we observe that when we use bi-cubic interpolation the values of the gradient on the edge of a small box, depends only on the specified gradients of the corners of that corresponding edge. Using this we can conclude that $\nabla g(x, 0) = \nabla g(x, M)$ and $\nabla g(0, y) = \nabla g(M, y)$ for all $x, y \in [0, 1]$. Putting all these together we conclude that the boundary conditions \eqref{eq:boudaryConditions} and \eqref{eq:boudaryConditions:Gradient} are satisfied by our construction. We state this in the following lemma.

\begin{lemma} \label{lem:boundaryConditions}
  The boundary conditions \eqref{eq:boudaryConditions} and \eqref{eq:boudaryConditions:Gradient} are satisfied by our construction.
\end{lemma} 

Next we verify using Groups 1. - 4. that there is no $0.01$-stationary point in any of the places A, B, C, D, E and this implies that there is no $0.01$-stationary point in any region outside the PLS Boxes which implies our Lemma \ref{lem:noSolutionsLemma}.

\paragraph{A.} We start with a figure of the region A in Figure \ref{fig:A}.

\begin{figure}
    \centering
    \includegraphics[scale=0.2]{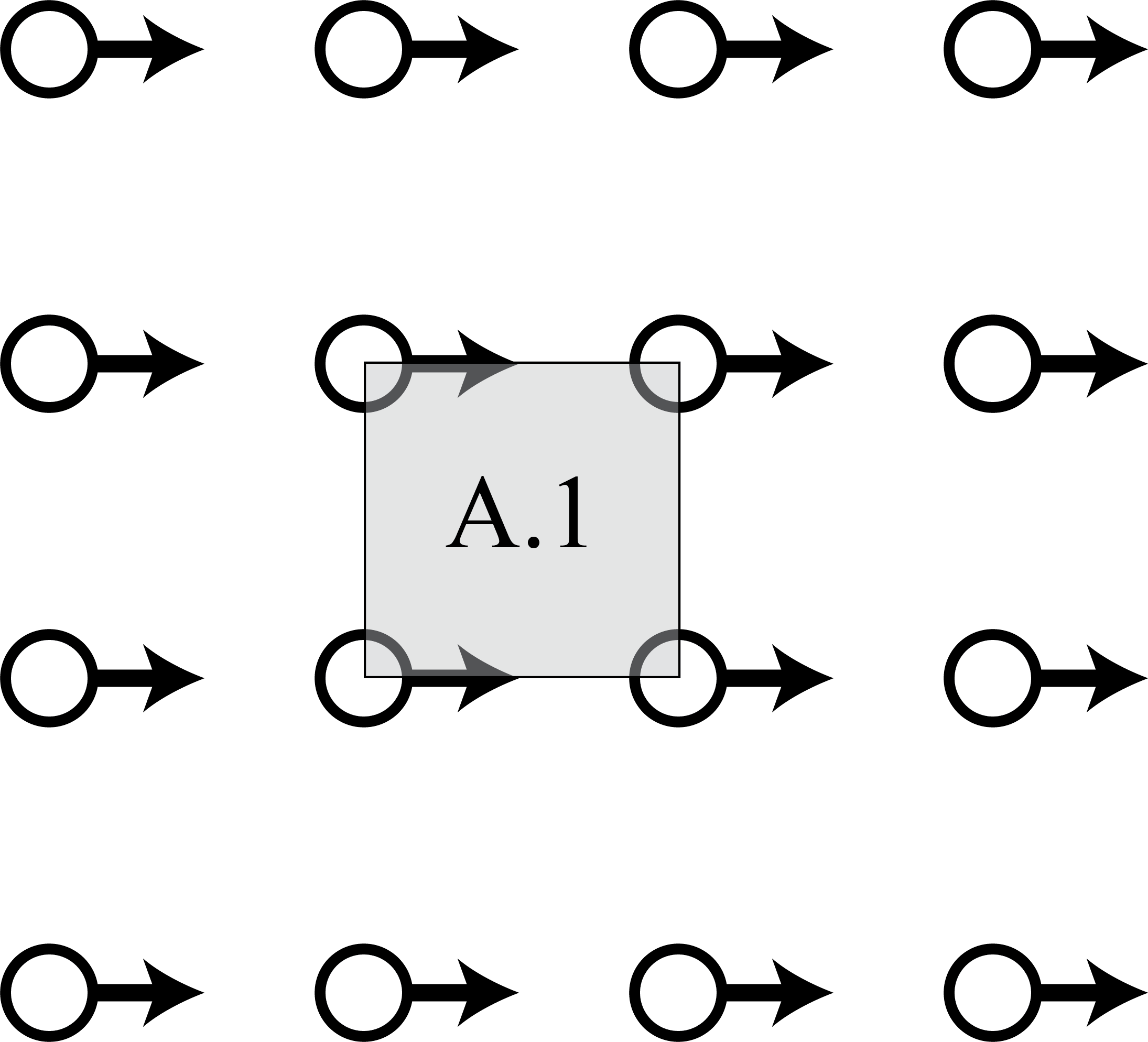}
    \caption{Small boxes of region A of Figure \ref{fig:regions2Analyze}.}
    \label{fig:A}
\end{figure}

As expected in this region A there is only one type of box that appears and it is of Group 1. It is easy to check that since the function value in the background decreases linearly with $x$, the conditions of Group 1 are satisfied and hence there is no solution in this region.

\paragraph{B.} We start with a figure of the region B in Figure \ref{fig:B}, where we indicate all the small boxes with colors and gradients that have not appeared in A.

\begin{figure}
    \centering
    \includegraphics[scale=0.2]{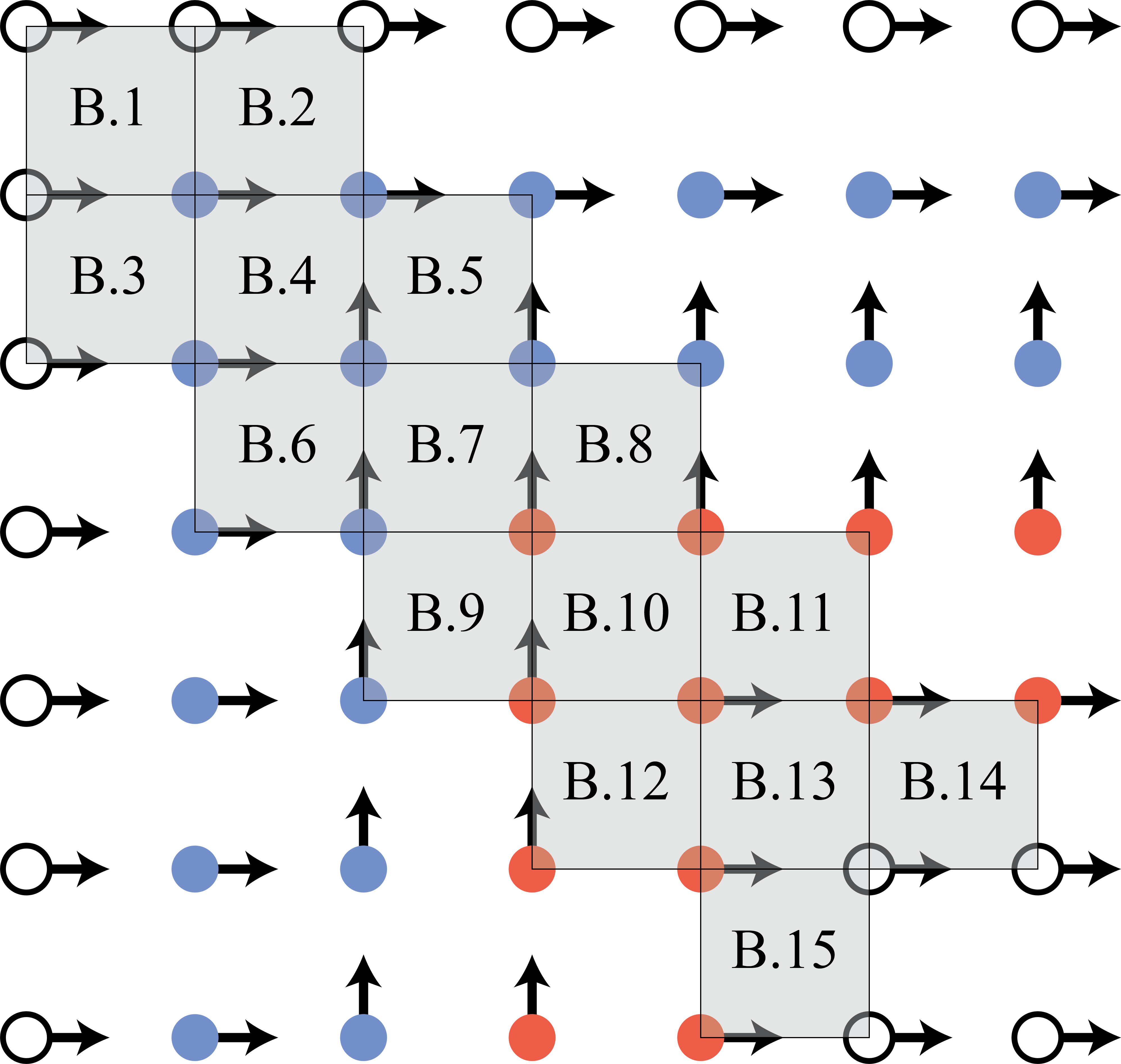}
    \caption{Small boxes of region B of Figure \ref{fig:regions2Analyze}.}
    \label{fig:B}
\end{figure}

We have the following categories:
\begin{itemize}
  \item[$\blacktriangleright$] The small boxes: B.1, B.2, B.3, B.13, B.14, B.15 follow from plain application of Group 1 using that: (1) all the colors decrease linearly as the $x$ coordinate increases, (2) light blue is everywhere at least $\frac{1}{M}$ smaller than the background, and (3) light red is everywhere at least $\frac{1}{M}$ larger than the background.
  \item[$\blacktriangleright$] The small boxes: B.7, B.8, B.9 follow from application of Group 1 after taking a reflection with respect to the $y = x$ axis using that: (1) light blue and light red decrease with linear rate as the $y$ coordinate increases, and (2) light red is everywhere at least $\frac{1}{M}$ larger than light blue.
  \item[$\blacktriangleright$] The small box B.4 follows from Group 3 after applying a $y = x$ reflection.
  \item[$\blacktriangleright$] The small box B.5 follows from a plain application of Group 2.
  \item[$\blacktriangleright$] The small box B.6 follows from Group 2 after applying a $y = x$ reflection.
  \item[$\blacktriangleright$] The small box B.10 follows from Group 3 after applying the following transformations: (i) reflection with respect to $y$, (ii) reflection with respect to $x$, and (iii) negation.
  \item[$\blacktriangleright$] The small box B.11 follows from Group 2 after applying the following transformations: (i) reflection with respect to $y$, (ii) reflection with respect to $x$, and (iii) negation.
  \item[$\blacktriangleright$] The small box B.12 follows from Group 2 after applying a $y = -x$ reflection and negation.
\end{itemize}

So no solution appears in B as well.

\paragraph{C.} We start with a figure of the region C in Figure \ref{fig:C}, where we indicate all the small boxes with colors and gradients that have not appeared in A or B.

\begin{figure}
    \centering
    \includegraphics[scale=0.2]{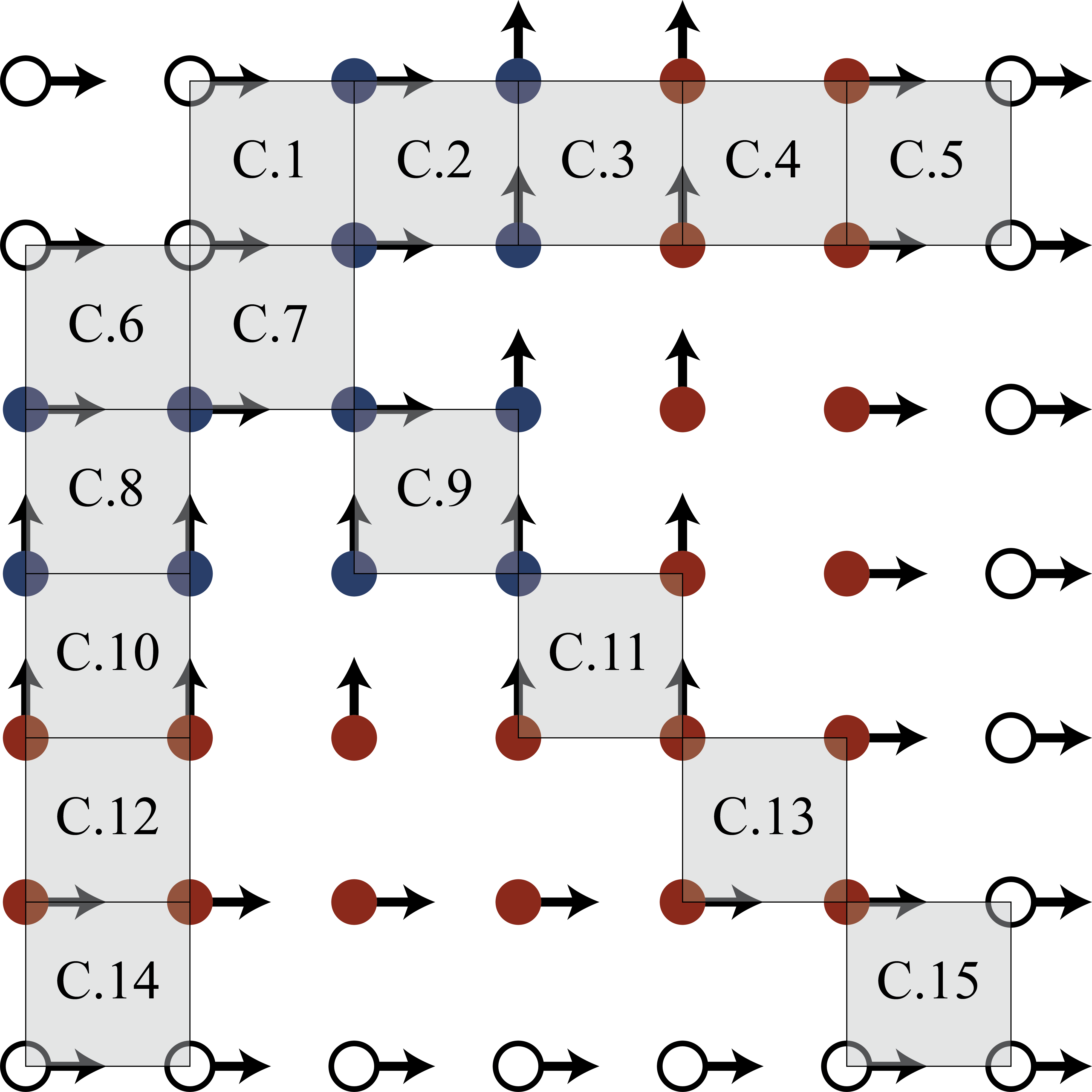}
    \caption{Small boxes of region C of Figure \ref{fig:regions2Analyze}.}
    \label{fig:C}
\end{figure}

We have the following categories:
\begin{itemize}
  \item[$\blacktriangleright$] The small boxes: C.1, C.5, C.6, C.7, C.14, C.15 follow from plain application of Group 1 using that: (1) all the colors decrease linearly as the $x$ coordinate increases, (2) dark blue is everywhere at least $\frac{1}{M}$ smaller than the background, and (3) dark red is everywhere at least $\frac{1}{M}$ larger than the background.
  \item[$\blacktriangleright$] The small boxes: C.3, C.10, C.11 follow from application of Group 1 after taking a reflection with respect to the $y = x$ axis using that: (1) dark blue and dark red decrease with linear rate as the $y$ coordinate increases, and (2) dark red is everywhere at least $\frac{1}{M}$ larger than dark blue.
  \item[$\blacktriangleright$] The small box C.2 follows from Group 2 after applying a $y = x$ reflection.
  \item[$\blacktriangleright$] The small box C.4 follows from Group 2 after applying a $y = -x$ reflection and negation.
  \item[$\blacktriangleright$] The small box C.8 follows from a plain application of Group 2.
  \item[$\blacktriangleright$] The small box C.9 follows from a plain application of Group 3.
  \item[$\blacktriangleright$] The small box C.12 follows from Group 3 after applying the following transformations: (i) reflection with respect to $y$, (ii) reflection with respect to $x$, and (iii) negation.
  \item[$\blacktriangleright$] The small box C.13 follows from Group 3 after applying a $y = -x$ reflection and negation.
\end{itemize}

So no solution appears in C as well.

\paragraph{D.} We start with a figure of the region D in Figure \ref{fig:D}, where we indicate all the small boxes with colors and gradients that have not appeared in A, B, or C.

\begin{figure}
    \centering
    \includegraphics[scale=0.2]{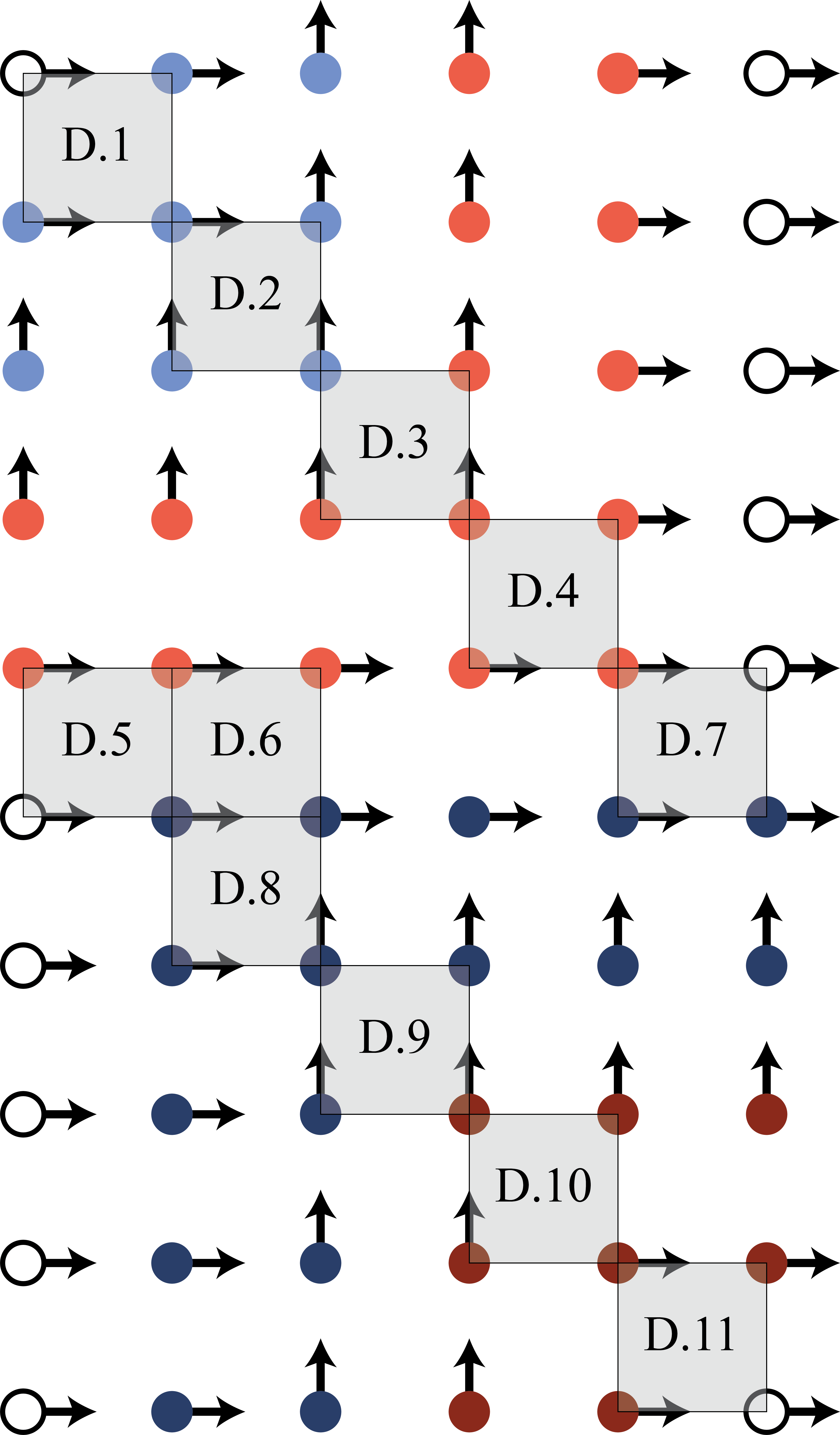}
    \caption{Small boxes of region D of Figure \ref{fig:regions2Analyze}.}
    \label{fig:D}
\end{figure}

\begin{itemize}
  \item[$\blacktriangleright$] The small boxes: D.1, D.5, D.6, D.7, D.11 follow from plain application of Group 1 using that: (1) all the colors decrease linearly as the $x$ coordinate increases, (2) light and dark blue are everywhere at least $\frac{1}{M}$ smaller than the background, (3) light and dark red are everywhere at least $\frac{1}{M}$ larger than the background, and (4) light and dark red are everywhere at least $\frac{1}{M}$ larger than light or dark blue.
  \item[$\blacktriangleright$] The small boxes: D.3, D.9 follow from application of Group 1 after taking a reflection with respect to the $y = x$ axis using that: (1) all blue and red colors decrease with linear rate as the $y$ coordinate increases, and (2) light and dark red are everywhere at least $\frac{1}{M}$ larger than light or dark blue.
  \item[$\blacktriangleright$] The small box D.2 follows from a plain application of Group 3.
  \item[$\blacktriangleright$] The small box D.4 follows from Group 3 after applying a $y = -x$ reflection and negation.
  \item[$\blacktriangleright$] The small box D.8 follows from Group 3 after applying a $y = x$ reflection.
  \item[$\blacktriangleright$] The small box D.10 follows from Group 3 after applying the following transformations: (i) reflection with respect to $y$, (ii) reflection with respect to $x$, and (iii) negation.
\end{itemize}

So no solution appears in D as well.

\paragraph{E.} We start with a figure of the region E in Figure \ref{fig:E}, where we indicate all the small boxes with colors and gradients that have not appeared in A, B, C, or D.

\begin{figure}
    \centering
    \includegraphics[scale=0.2]{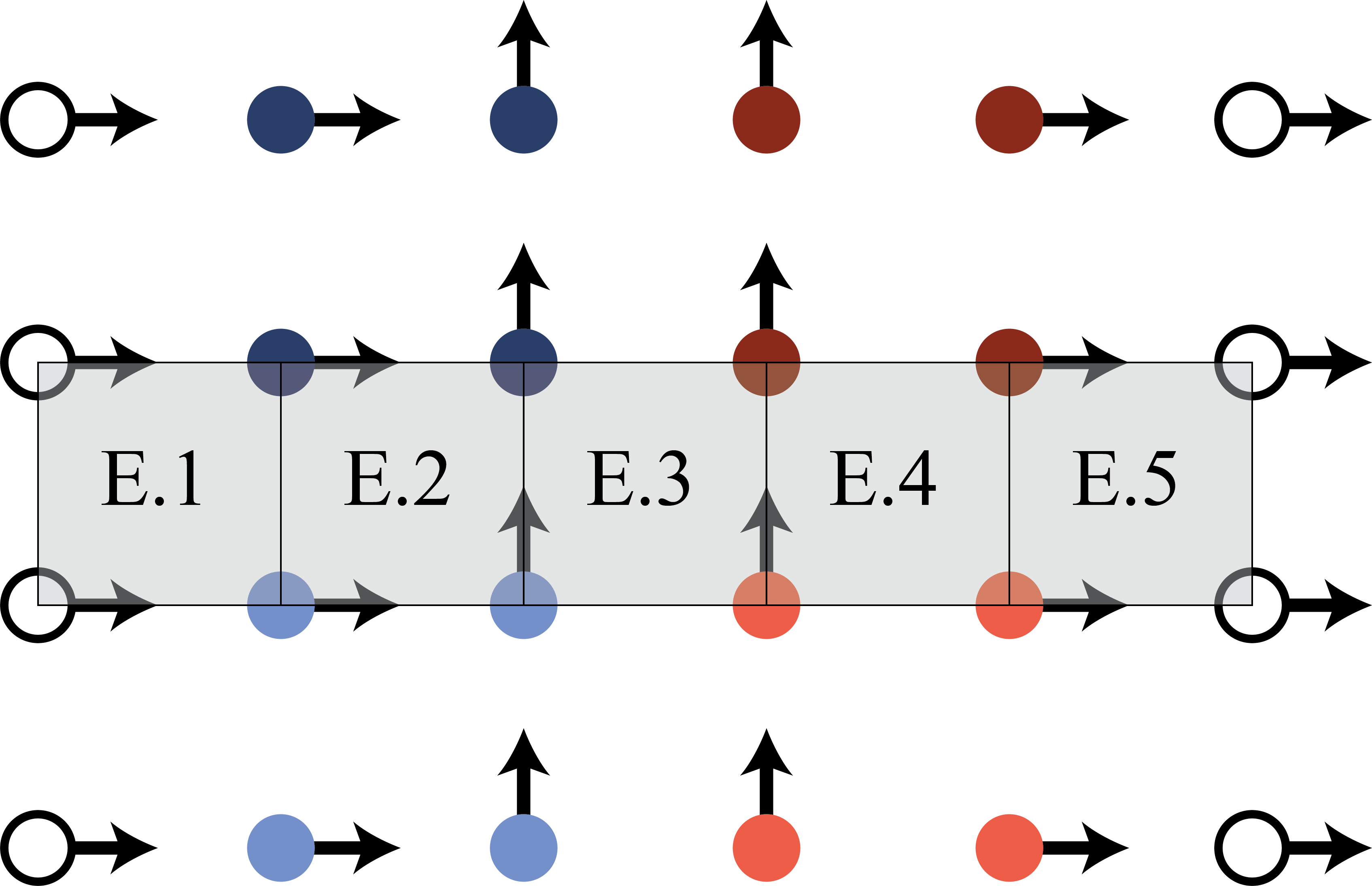}
    \caption{Small boxes of region E of Figure \ref{fig:regions2Analyze}.}
    \label{fig:E}
\end{figure}

\begin{itemize}
  \item[$\blacktriangleright$] The small boxes: E.1, E.5 follow from plain application of Group 1 using that: (1) all the colors decrease linearly as the $x$ coordinate increases, (2) light and dark blue are everywhere at least $\frac{1}{M}$ smaller than the background, and (3) light and dark red are everywhere at least $\frac{1}{M}$ larger than the background.
  \item[$\blacktriangleright$] The small box E.3 follows from application of Group 1 after taking a reflection with respect to the $y = x$ axis using that: (1) light blue is at least $\frac{1}{M}$ larger than dark blue, (2) light read is at least $\frac{1}{M}$ larger than dark red.
  \item[$\blacktriangleright$] The small box E.2 follows from Group 2 after applying a $y = x$ reflection and using the fact that light blue is larger than dark blue.
  \item[$\blacktriangleright$] The small box E.4 follows from Group 2 after applying a $y = -x$ reflection and negation and using the fact that light red is larger than dark red.
\end{itemize}

So no solution appears in E as well.
\medskip

\noindent Since no solutions have appeared in any of A, B, C, D, E and there are no other types of small boxes that appear outside the PLS boxes we conclude that Lemma \ref{lem:noSolutionsLemma} follows.
\end{proof}

\subsection{PLS Boxes} \label{sec:PLSBoxes}

  We are now ready to define the PLS boxes. The construction of the PLS boxes follows the high level idea of the PLS Labyrinth of \cite{FearnleyGHS22-gradient} but adapted to fit the periodic construction that we described above. We start with an instance of the \iter/ problem and we want to encode it inside both of the PLS Box (A) and the PLS Box (B). For the illustration of the construction we use the \iter/ instance that we show in Figure \ref{fig:ITERexample}.

  \begin{figure}
    \centering
    \includegraphics[scale=0.3]{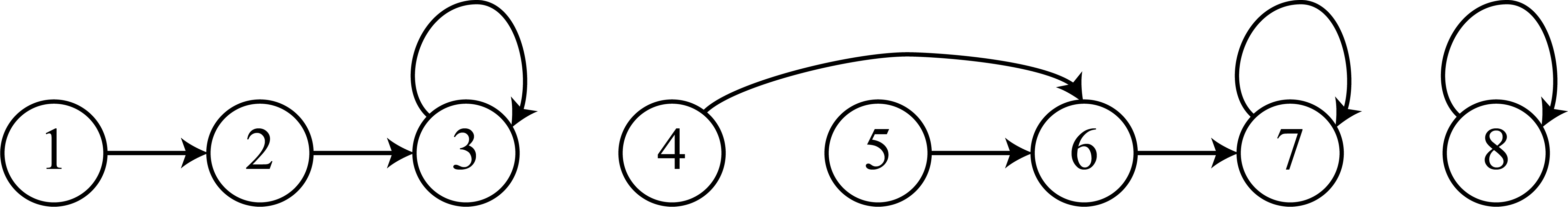}
    \caption{Instance of the \iter/ problem. The nodes correspond to the nodes of the set $[2^n]$ for $n = 3$ and the arrows correspond to the output of the circuit $C$. The solutions are the nodes $2$ and $6$ in this example.}
    \label{fig:ITERexample}
  \end{figure}

  We begin with the description of the PLS Box (A) and then we describe PLS Box (B) which is a symmetric version of (A). Before that we give some general definitions that are needed for both of the boxes. Let assume that we are given an instance of \iter/, i.e., a boolean circuit $C : [2^n] \to [2^n]$ for some $n \in \N$. The PLS Box contains a subgrid of size $N \times N$. We split this subgrid into $2^n \times 2^n$ \textit{medium boxes} of size $8 \times 8$. Hence, we pick $N = 2^{n + 3}$. For simplicity we index all the rows and columns of the PLS boxes starting from $0$ to $N$ ignoring the constant offset of the placement of the PLS boxes inside the grid $G_M$.
  
  \paragraph{PLS Box (A).} In Figure \ref{fig:PLSsubgridA} we show how we split the $N \times N$ subgrid of $G_M$ into $2^n \times 2^n$ medium boxes, and we also show that we use $Q(i, j)$ to indicate the medium box that is in the $i$th row of medium boxes and the $j$th column of medium boxes.

  \begin{figure}
    \centering
    \subfigure[PLS Box (A).]{\label{fig:PLSsubgridA} \includegraphics[scale=0.3]{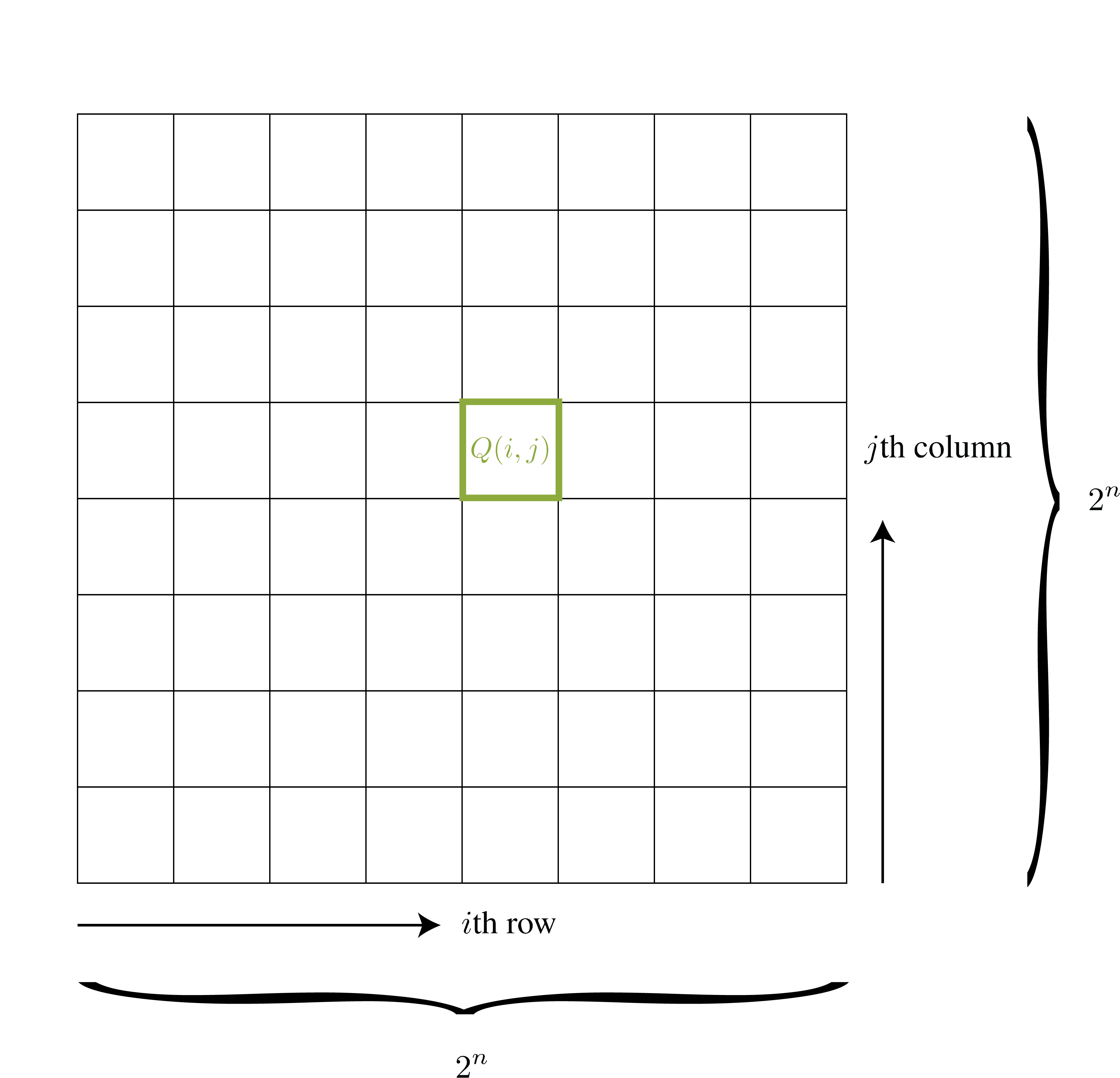}} \\[6pt]
    \subfigure[PLS Box (B).]{\label{fig:PLSsubgridB} \includegraphics[scale=0.3]{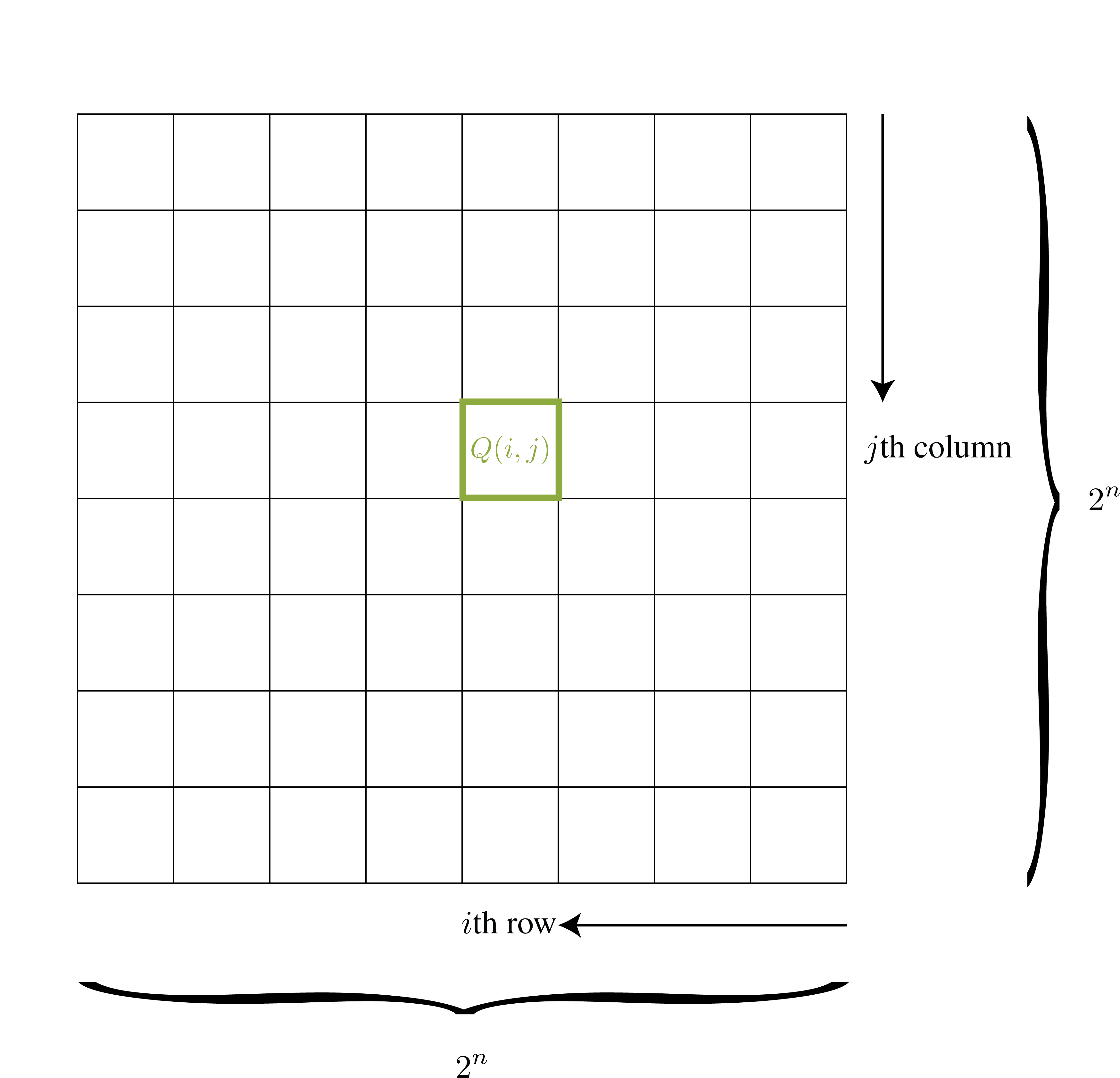}}
    \caption{The medium boxes and their indexing rules in the PLS Box (A) and the PLS Box (B). The arrows indicate the direction of increase of $i$ and $j$ in PLS Box (A) and the PLS Box (B) respectively.}
    \label{fig:PLSsubgrid}
  \end{figure}

  \medskip
  \noindent \textbf{(a) Regions.} The locations of the regions for constructing the PLS Box (A) are the following
  \begin{itemize}
    \item[$\blacktriangleright$] \textbf{Horizontal Dark Red Line.} We recall the definition of the regions outside of the PLS boxes. In particular, as shown in Figure \ref{fig:regionsLocations}, on the bottom left corner of the PLS Box (A) there is a dark blue $\&$ dark red line that goes inside the PLS box. The first thing in the construction of the PLS box is to extend a horizontal line, in the bottom two rows of the PLS Box, with dark red color starting from the bottom of the medium box $Q(1,1)$ all the way until the box $Q(2^n, 1)$ as shown in the example of Figure \ref{fig:PLSboxA}.
    \item[$\blacktriangleright$] \textbf{Initial Node.} The initial node corresponds to the node $1$ of the \iter/ instance. For example see Figure \ref{fig:ITERexample}. The implementation of this initial node is to continue vertically the dark blue $\&$ dark red line that touches the PLS Box (A) from outside for 4 rows up.
    \item[$\blacktriangleright$] \textbf{Nodes $\boldsymbol{u \in [2^n]}$ with $\boldsymbol{C(u) > u}$.} These are the nodes of \iter/ that do not have a self-loop and are potential solutions to \iter/ depending on whether $C(C(u)) > C(u)$ or not. Independently of whether $u$ is a solution, for every $u$ such that $C(u) > u$ we start from the $Q(u, 1)$ medium box until the $Q(u, u)$ box a vertical dark blue $\&$ dark red line where each of the colors have width $2$. The $4$ columns that this line uses are the $4$ columns in the middle of $Q(u, 1)$, i.e., the $4$ small columns in the middle of the $u$th column of medium boxes. Inside $Q(u, 1)$ this vertical dark blue $\&$ dark red line starts from the third row of $Q(u, 1)$, i.e., right above the horizontal dark red line that we described above. Inside $Q(u, u)$ the vertical dark blue $\&$ dark red line stops at the fourth row of $Q(u, u)$, i.e., in the middle of the medium box $Q(u, u)$.
    \item[$\blacktriangleright$] \textbf{Connection of $\boldsymbol{u \in [2^n]}$ with $\boldsymbol{C(u)}$ if $\boldsymbol{C(u) > u \textbf{ and } C(C(u)) > C(u)}$.} If $C(u) > u$ then already as we described construct a vertical dark blue $\&$ dark red line that ends in the middle of $Q(u, u)$. Now if we additionally have that $C(C(u)) > C(u)$ then in the middle of $Q(u, u)$ right above the end of the vertical dark blue $\&$ dark red line we start a horizontal dark blue line with width $2$ that continues until the medium box $Q(C(u), u)$. Since $C(C(u)) > C(u)$ and $C(u) > u$ this means that according to the rule above there will be a vertical dark blue $\&$ dark red line that goes through the box $Q(C(u), u)$. This dark blue $\&$ dark red line will have the blue to the left and the red the right. Our horizontal dark blue line will stop once it hits the vertical dark blue line in $Q(C(u), u)$.
    \item[$\blacktriangleright$] \textbf{Crossing between horizontal dark blue lines and vertical dark blue $\&$ dark red lines.} Depending on the instance, it could be that there exist two vertices $u, v \in [2^n]$ such that: $C(u) > u$, $C(C(u)) > C(u)$, $u < v < C(u)$, and $C(v) > v$. For example this is true in the example of Figure \ref{fig:ITERexample} with $u = 4$ and $v = 5$. In that case, in the box $Q(v, u)$ the horizontal dark blue line from $Q(u, u)$ to $Q(C(u), u)$ and the vertical dark blue $\&$ dark red line from $Q(v, 1)$ to $Q(v, v)$ cross. When this happens then inside $Q(v, u)$ the vertical dark blue $\&$ dark red line overwrites the horizontal dark blue line and otherwise the lines occupy the small boxes that they were supposed to.
    \item[$\blacktriangleright$] \textbf{Background.} Every point in the grid that does not belong to any of the aforementioned regions belongs to the background which is defined the same way that we defined the background in Section \ref{sec:structureOfG}.
  \end{itemize}

  We illustrate the above construction for the \iter/ example of Figure \ref{fig:ITERexample} in Figure \ref{fig:PLSboxA}.

  \begin{figure}[t]
    \centering
    \includegraphics[scale=0.4]{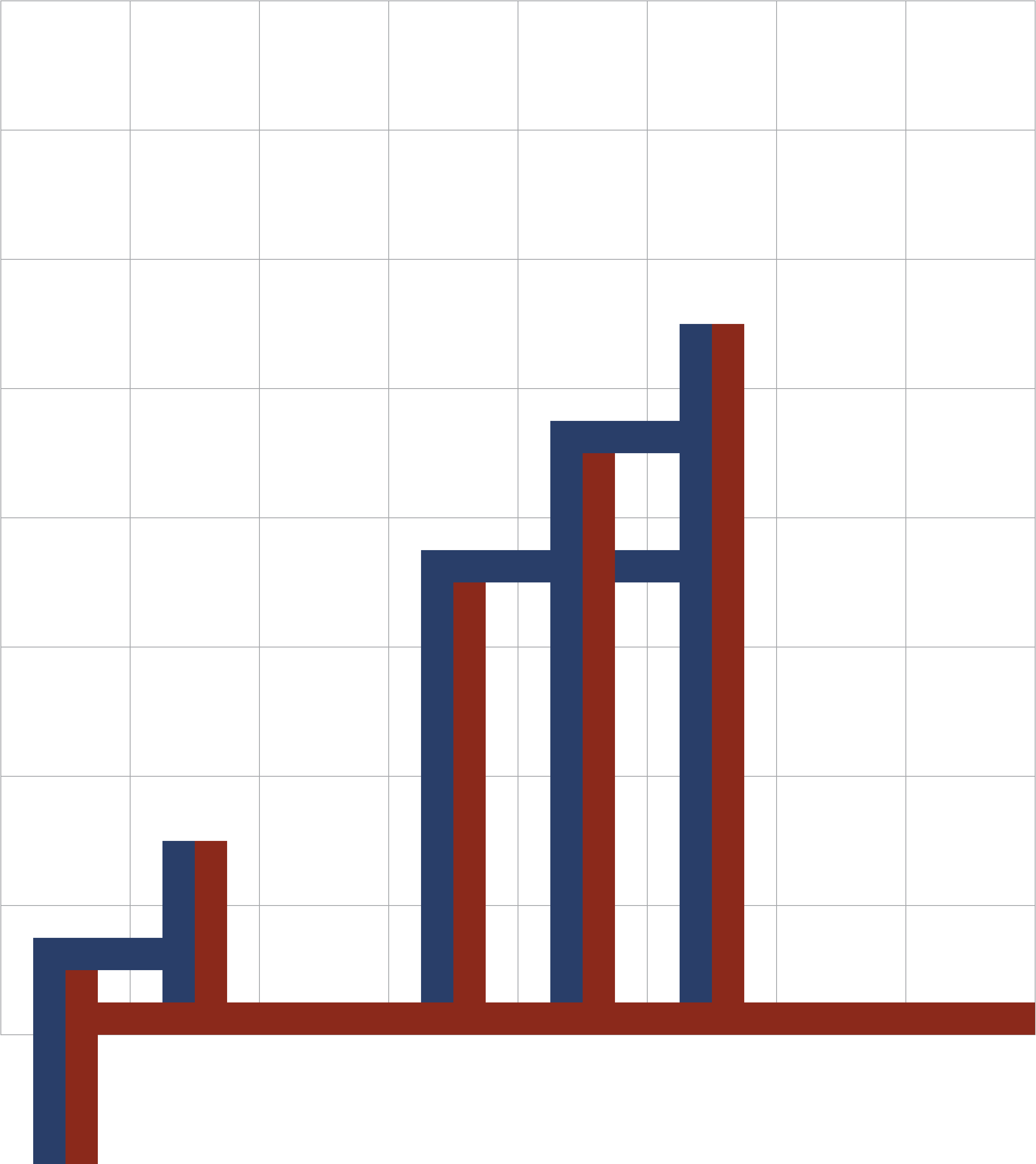}
    \caption{The regions of PLS Box (A) when implementing the \iter/ instance provided in Figure \ref{fig:ITERexample}.}
    \label{fig:PLSboxA}
  \end{figure}
  
  \medskip

  \noindent \textbf{(b) Function Values.} The function values are defined exactly the same as we described in Section \ref{sec:structureOfG} for the regions: dark blue, dark red, and background. In particular, using the functions $h_B$, $h_{DB}$, $h_{DR}$.
  \medskip

  \noindent \textbf{(c) Gradient Values.} The gradient values follow the following rules:
  \begin{itemize}
    \item[$\blacktriangleright$] \textbf{Horizontal dark red lines.} The gradients in the horizontal dark red lines are always $(-\delta, 0)$ expect from the following places: for any $u \in [2^n]$ such that $C(u) > u$, in the middle of the medium box $Q(u, 1)$ in the row before the vertical dark blue $\&$ dark red line and directly below the right dark blue line and the left dark red line in these two grid points the gradient is $(0, -\delta)$.
    \item[$\blacktriangleright$] \textbf{Vertical dark blue $\&$ dark red lines.} In this line the left dark blue lines has gradients $(-\delta, 0)$, the right dark blue line has gradients $(0, -\delta)$, the left dark red line has gradients $(0, -\delta)$ and the right dark red line has gradients $(-\delta, 0)$.
    \item[$\blacktriangleright$] \textbf{Horizontal dark blue lines.} The gradients in the horizontal dark blue lines are always $(-\delta, 0)$ expect from the following places: for any $u \in [2^n]$ such that $C(u) > u$ and $C(C(u)) > C(u)$ the, in the middle of the medium box $Q(u, u)$ in the row above the vertical dark blue $\&$ dark red line and directly above the right dark blue line and the left dark red line in these two grid points the gradient is $(0, -\delta)$.
    \item[$\blacktriangleright$] \textbf{Background.} The gradient in the grid points of the background is always $(-\delta, 0)$.
  \end{itemize}

  The rest of the function in PLS Box (A) is defined via the bicubic interpolation that we described in Section \ref{sec:interpolation}. This completes the description of the PLS Box (A).

  \paragraph{PLS Box (B).} We again split the $N \times N$ subgrid of $G_M$ into $2^n \times 2^n$ medium boxes, and we use $Q(i, j)$ to indicate the medium box that is in the $i$th row of medium boxes and the $j$th column of medium boxes but when instead of starting the counting from the bottom left corner, we start the counting from the top right corner, as we show in Figure \ref{fig:PLSsubgridB}.
  \medskip
  
  \noindent \textbf{(a) Regions.} The locations of the regions for constructing the PLS Box (B) are the following
  \begin{itemize}
    \item[$\blacktriangleright$] \textbf{Horizontal Light Blue Line.} We recall the definition of the regions outside of the PLS boxes. In particular, as shown in Figure \ref{fig:regionsLocations}, on the top right corner of the PLS Box (B) there is a light blue $\&$ light red line that gets inside the PLS box. The first thing in the construction of the PLS box is to extend a horizontal line, in the top two rows of the PLS Box, with light blue color starting from the top of the medium box $Q(1,1)$ all the way until the box $Q(2^n, 1)$ as shown in the example of Figure \ref{fig:PLSboxB}.
    \item[$\blacktriangleright$] \textbf{Initial Node.} The initial node corresponds to the node $1$ of the \iter/ instance. For example see Figure \ref{fig:ITERexample}. The implementation of this initial node is to continue vertically the light blue $\&$ light red line that touches the PLS Box (B) from outside for 4 rows down.
    \item[$\blacktriangleright$] \textbf{Nodes $\boldsymbol{u \in [2^n]}$ with $\boldsymbol{C(u) > u}$.} These are the nodes of \iter/ that do not have a self-loop and are potential solutions to \iter/ depending on whether $C(C(u)) > C(u)$ or not. Independently of whether $u$ is a solution, for every $u$ such that $C(u) > u$ we start from the $Q(u, 1)$ medium box until the $Q(u, u)$ box a downwards vertical light blue $\&$ light red line where each of the colors have width $2$. The $4$ columns that this line uses are the $4$ columns in the middle of $Q(u, 1)$, i.e., the $4$ small columns in the middle of the $u$th column of medium boxes. Inside $Q(u, 1)$ this vertical light blue $\&$ light red line starts from the third row of $Q(u, 1)$, i.e., right above the horizontal light blue line that we described above. Inside $Q(u, u)$ the vertical light blue $\&$ light red line stops at the fourth row of $Q(u, u)$, i.e., in the middle of the medium box $Q(u, u)$.
    \item[$\blacktriangleright$] \textbf{Connection of $\boldsymbol{u \in [2^n]}$ with $\boldsymbol{C(u)}$ if $\boldsymbol{C(u) > u \textbf{ and } C(C(u)) > C(u)}$.} If $C(u) > u$ then already as we described construct a downwards vertical light blue $\&$ light red line that ends in the middle of $Q(u, u)$. Now if we additionally have that $C(C(u)) > C(u)$ then in the middle of $Q(u, u)$ right below the end of the vertical light blue $\&$ light red line we start a horizontal light red line with width $2$ that continues until the medium box $Q(C(u), u)$. Since $C(C(u)) > C(u)$ and $C(u) > u$ this means that according to the rule above there will be a vertical light blue $\&$ light red line that goes downwards through the box $Q(C(u), u)$. This light blue $\&$ light red line will have the blue to the left and the red the right. Our horizontal light red line, which in this case is coming from the right, will stop once it hits the vertical light red line in $Q(C(u), u)$.
    \item[$\blacktriangleright$] \textbf{Crossing between horizontal light red lines and vertical light blue $\&$ light red lines.} Depending on the instance, it could be that there exist two vertices $u, v \in [2^n]$ such that: $C(u) > u$, $C(C(u)) > C(u)$, $u < v < C(u)$, and $C(v) > v$. For example this is true in the example of Figure \ref{fig:ITERexample} with $u = 4$ and $v = 5$. In that case, in the box $Q(v, u)$ the horizontal light red line from $Q(u, u)$ to $Q(C(u), u)$ and the vertical light blue $\&$ light red line from $Q(v, 1)$ to $Q(v, v)$ cross. When this happens then inside $Q(v, u)$ the vertical light blue $\&$ light red line overwrites the horizontal light red line and otherwise the lines occupy the small boxes that they were supposed to.
    \item[$\blacktriangleright$] \textbf{Background.} Every point in the grid that does not belong to any of the aforementioned regions belongs to the background which is defined the same way that we defined the background in Section \ref{sec:structureOfG}.
  \end{itemize}

  We illustrate the above construction for the \iter/ example of Figure \ref{fig:ITERexample} in Figure \ref{fig:PLSboxB}.

  \begin{figure}[t]
    \centering
    \includegraphics[scale=0.4]{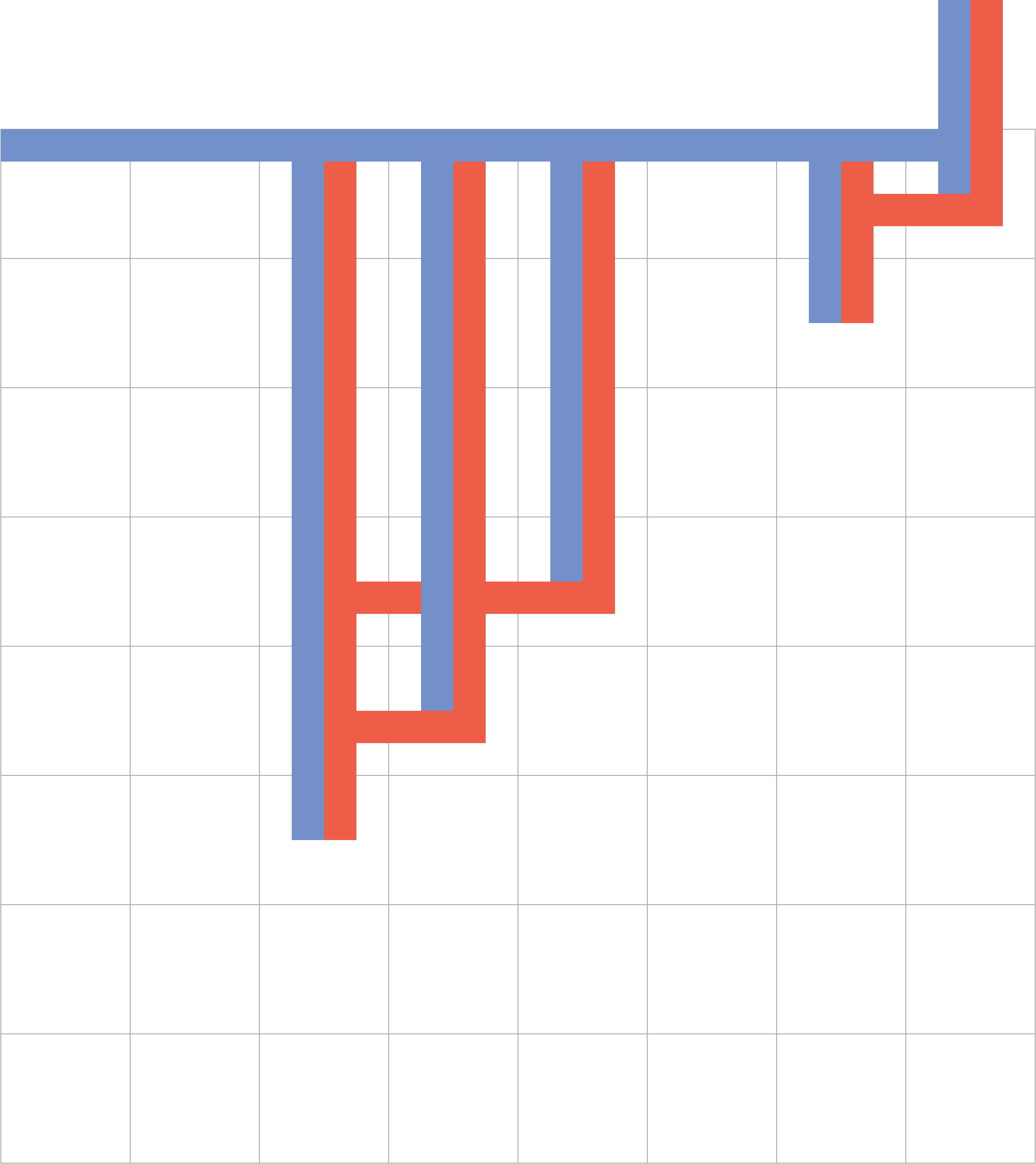}
    \caption{The regions of PLS Box (B) when implementing the \iter/ instance provided in Figure \ref{fig:ITERexample}.}
    \label{fig:PLSboxB}
  \end{figure}
  \medskip

  \noindent \textbf{(b) Function Values.} The function values are defined exactly the same as we described in Section \ref{sec:structureOfG} for the regions: light blue, light red, and background. In particular, using the functions $h_B$, $h_{LB}$, $h_{LR}$.
  \medskip

  \noindent \textbf{(c) Gradient Values.} The gradient values follow the following rules:
  \begin{itemize}
    \item[$\blacktriangleright$] \textbf{Horizontal light blue lines.} The gradients in the horizontal light blue lines are always $(-\delta, 0)$ expect from the following places: for any $u \in [2^n]$ such that $C(u) > u$, in the middle of the medium box $Q(u, 1)$ in the row after the vertical light blue $\&$ light red line and directly above the right light blue line and the left light red line, in these two grid points, the gradient is $(0, -\delta)$.
    \item[$\blacktriangleright$] \textbf{Vertical light blue $\&$ light red lines.} In this line the left light blue lines has gradients $(-\delta, 0)$, the right light blue line has gradients $(0, -\delta)$, the left light red line has gradients $(0, -\delta)$ and the light dark red line has gradients $(-\delta, 0)$.
    \item[$\blacktriangleright$] \textbf{Horizontal light red lines.} The gradients in the horizontal light red lines are always $(-\delta, 0)$ expect from the following places: for any $u \in [2^n]$ such that $C(u) > u$ and $C(C(u)) > C(u)$ the, in the middle of the medium box $Q(u, u)$ in the row below the vertical light blue $\&$ light red line and directly below the right light blue line and the left light red line, in these two grid points, the gradient is $(0, -\delta)$.
    \item[$\blacktriangleright$] \textbf{Background.} The gradient in the grid points of the background is always $(-\delta, 0)$.
  \end{itemize}

  The rest of the function in PLS Box (B) is defined via the bicubic interpolation that we described in Section \ref{sec:interpolation}. This completes the description of the PLS Box (B).

  We are now ready to prove the following lemma.

  \begin{lemma} \label{lem:noSolutionsInsidePLSbox}
    After interpolating using the techniques discussed in Section \ref{sec:interpolation}, there is no $0.01$-stationary point in any medium box of PLS Box (A) and PLS Box (B), except medium boxes $Q(u, u)$ for which $C(u) > u$ and $C(C(u)) = C(u)$.
  \end{lemma}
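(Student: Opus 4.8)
The plan is to run exactly the same machinery as in the proof of Lemma~\ref{lem:noSolutionsLemma}. A $0.01$-stationary point can only appear where the bicubic interpolation of some small box has one, and, as established there, the basic transformations (reflections about the $y$-, $x$-, $y=x$- and $y=-x$-axes, and negation) commute with bicubic interpolation and cannot create $0.01$-stationary points. Hence it suffices to record, for each small box that occurs inside a non-exception medium box of PLS Box~(A) or PLS Box~(B), only the colours and gradient arrows at its four corners (independently of position), and to exhibit a composition of basic transformations carrying it into one of the four Groups 1--4 of the proof of Lemma~\ref{lem:noSolutionsLemma}, which were already shown in \citep{FearnleyGHS22-gradient} to contain no $0.01$-stationary point.

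The first step is to enumerate the medium-box configurations that actually occur. Reading off Section~\ref{sec:PLSBoxes}, every medium box of PLS Box~(A) is, up to symmetry, one of: \textbf{(i)} a pure background box; \textbf{(ii)} a box traversed by a straight horizontal dark-red line, or by the straight vertical dark-blue~$\&$~dark-red line, or by the straight horizontal dark-blue connecting line; \textbf{(iii)} the ``emergence'' box $Q(u,1)$ where the vertical dark-blue~$\&$~dark-red line grows out of the horizontal dark-red line, carrying the special $(0,-\delta)$ gradients; \textbf{(iv)} the ``turn'' box $Q(u,u)$ with $C(u)>u$ and $C(C(u))>C(u)$, where the vertical line bends into the horizontal dark-blue connecting line, again with special $(0,-\delta)$ gradients; \textbf{(v)} the ``T-junction'' box $Q(C(u),u)$ where the horizontal dark-blue line meets the vertical dark-blue~$\&$~dark-red line; \textbf{(vi)} the ``crossing'' box $Q(v,u)$ where a vertical line overwrites a horizontal dark-blue line; and \textbf{(vii)} the boundary boxes where the horizontal dark-red line and the initial segment join the dark-blue~$\&$~dark-red line entering the PLS box from the outer region. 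For PLS Box~(B) the list is the same after a $180^\circ$ rotation (a composition of the two axis reflections), with the light colours playing a structurally identical role to the dark ones — the small-box check only uses that blue lies at least $1/M$ below the background, red at least $1/M$ above it, and the relevant colours decrease linearly along each line — so PLS Box~(B) introduces no new small-box types.

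The second step is the case check: for each type and each small box within it, list the corner colours and arrows (all fixed by Section~\ref{sec:PLSBoxes} and Section~\ref{sec:structureOfG}), and, exactly as for regions A--E in the proof of Lemma~\ref{lem:noSolutionsLemma}, name the transformation that places it into Group~1, 2, 3 or~4. Types~(i)--(ii) reuse the background and straight-line small boxes already handled there; the genuinely new work sits at (iii)--(vi). For the emergence box (iii) and the turn box (iv) one uses the value orderings above together with the special $(0,-\delta)$ gradients to show that the bend consists of turn-type small boxes falling into Groups~2 and~3, so that no spurious local minimum is produced; for the T-junction (v) and the crossing (vi) one checks the four-way meeting patterns against the Group conditions, using in particular that both the horizontal and the vertical dark-blue segments are defined by the same function $h_{DB}$, so their values agree consistently and the gaps to the background and to dark red are as needed. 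Finally, one observes that the only line configuration that is a genuine dead end — a dark-blue~$\&$~dark-red line that terminates with no continuing line — occurs precisely in the medium box $Q(u,u)$ with $C(u)>u$ and $C(C(u))=C(u)$ (and its PLS Box~(B) analogue), which is exactly the box excluded from the statement; in every other medium box every small box lies in one of the four Groups, so the lemma follows.

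The main obstacle is the size of the case analysis at the new junctions — the emergence box, the turn box, the T-junction and the crossing — where the colour/arrow patterns differ from anything outside the PLS boxes; the most delicate is the turn box (iv), where one must verify that, because the line continues out of $Q(u,u)$ rather than stopping there, every small box at the bend still belongs to one of Groups~1--4, and it is exactly here that the extra $(0,-\delta)$ gradients of Section~\ref{sec:PLSBoxes} are needed. A secondary subtlety is to confirm that the horizontal dark-red line and the horizontal dark-blue connecting lines do not dead-end inside a non-exception medium box but rejoin the surrounding structure, so that no unwanted local maximum (for the red line) or local minimum (for a blue line) is created there.
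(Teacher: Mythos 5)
Your proposal follows the same plan as the paper: reduce every small box inside a non-exception medium box of PLS Box (A)/(B) to one of Groups~1--4 via the basic transformations, after enumerating the distinct junction patterns (background, straight lines, emergence, turn, T-junction, crossing, boundary); the paper's regions F--K and L--R are exactly these, and the paper likewise isolates the dead-end turn box $Q(u,u)$ with $C(u)>u$ and $C(C(u))=C(u)$ as the sole excluded case. One small imprecision: your claim that PLS Box~(B) introduces no new small-box types because it is a ``$180^\circ$ rotation'' of PLS Box~(A) is too quick --- the two boxes also swap which line is red and which is blue (and the corresponding gradient behaviour), so the transformation needed is a rotation \emph{together with} negation; the paper instead carries out an explicit check of regions L and M and does find corner patterns not appearing earlier (L.1--L.5, M.1--M.2), though of course they reduce to the groups via the expected symmetries. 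You also omit the region covering the far end of the horizontal dark-red line (the paper's K), but there all small boxes are ones already checked, so this is harmless.
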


  \begin{proof}(of Lemma \ref{lem:noSolutionsInsidePLSbox})
    For this proof we will again use the groups of small boxes that we introduced in the proof of Lemma \ref{lem:noSolutionsLemma}.

    We start with identifying the regions in PLS Box (A) and PLS Box (B) that we need to check, shown in Figures \ref{fig:PLSboxAAnalysis} and \ref{fig:PLSboxBAnalysis} respectively. In particular we need to check the following regions in PLS Box (A):
    \begin{enumerate}
      \item[F.] The initial node in PLS Box (A) where the horizontal dark red line starts.
      \item[G.] The start of a vertical dark blue $\&$ dark red line.
      \item[H.] The end of a vertical dark blue $\&$ dark red line and start of a horizontal dark blue line.
      \item[I.] The crossing of a horizontal dark blue line and a vertical dark blue $\&$ dark red line.
      \item[J.] The end of a horizontal dark blue line.
      \item[K.] The end of the horizontal dark red line.
    \end{enumerate}
    and symmetrically the following regions in PLS Box (B):
    \begin{enumerate}
      \item[L.] The initial node in PLS Box (B) where the horizontal light blue line starts.
      \item[M.] The start of a vertical light blue $\&$ light red line.
      \item[N.] The end of a vertical light blue $\&$ light red line and start of a horizontal light red line.
      \item[O.] The crossing of a horizontal light red line and a vertical light blue $\&$ light red line.
      \item[P.] The end of a horizontal light red line.
      \item[R.] The end of the horizontal light blue line.
    \end{enumerate}
    These regions are shown in Figures \ref{fig:PLSboxAAnalysis} and \ref{fig:PLSboxBAnalysis}. We intentionally left out of the above lists the end of the vertical dark blue $\&$ dark red lines with the start of a horizontal dark blue line because these correspond exactly to medium boxes $Q(u, u)$ such that $C(u) > u$ but $C(C(u)) = C(u)$ which correspond to solutions of the \iter/ instance. Hence if there are solutions in these medium boxes then we recover a solution of the \iter/ instance which is what we want. For the same reason we left out of the above lists the end of a vertical light blue $\&$ light red path without the start of a horizontal light red path.

    \begin{figure}[t]
      \centering
      \includegraphics[scale=0.45]{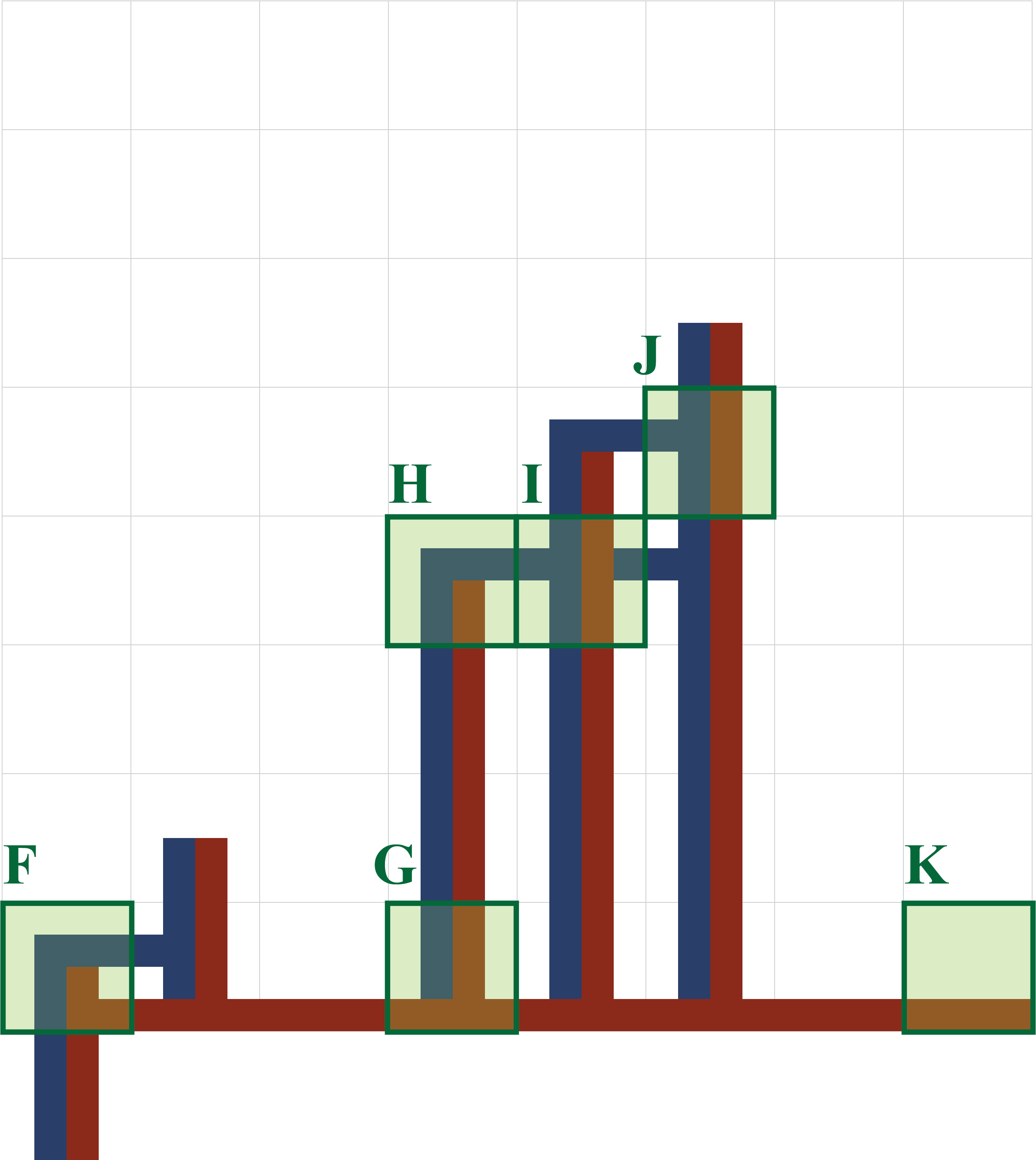}
      \caption{With the green squares we indicate the regions, with their labels, that we need to analyze in the proof of Lemma \ref{lem:noSolutionsInsidePLSbox} for the case of PLS Box (A).}
      \label{fig:PLSboxAAnalysis}
    \end{figure}
    
    \begin{figure}[t]
      \centering
      \includegraphics[scale=0.45]{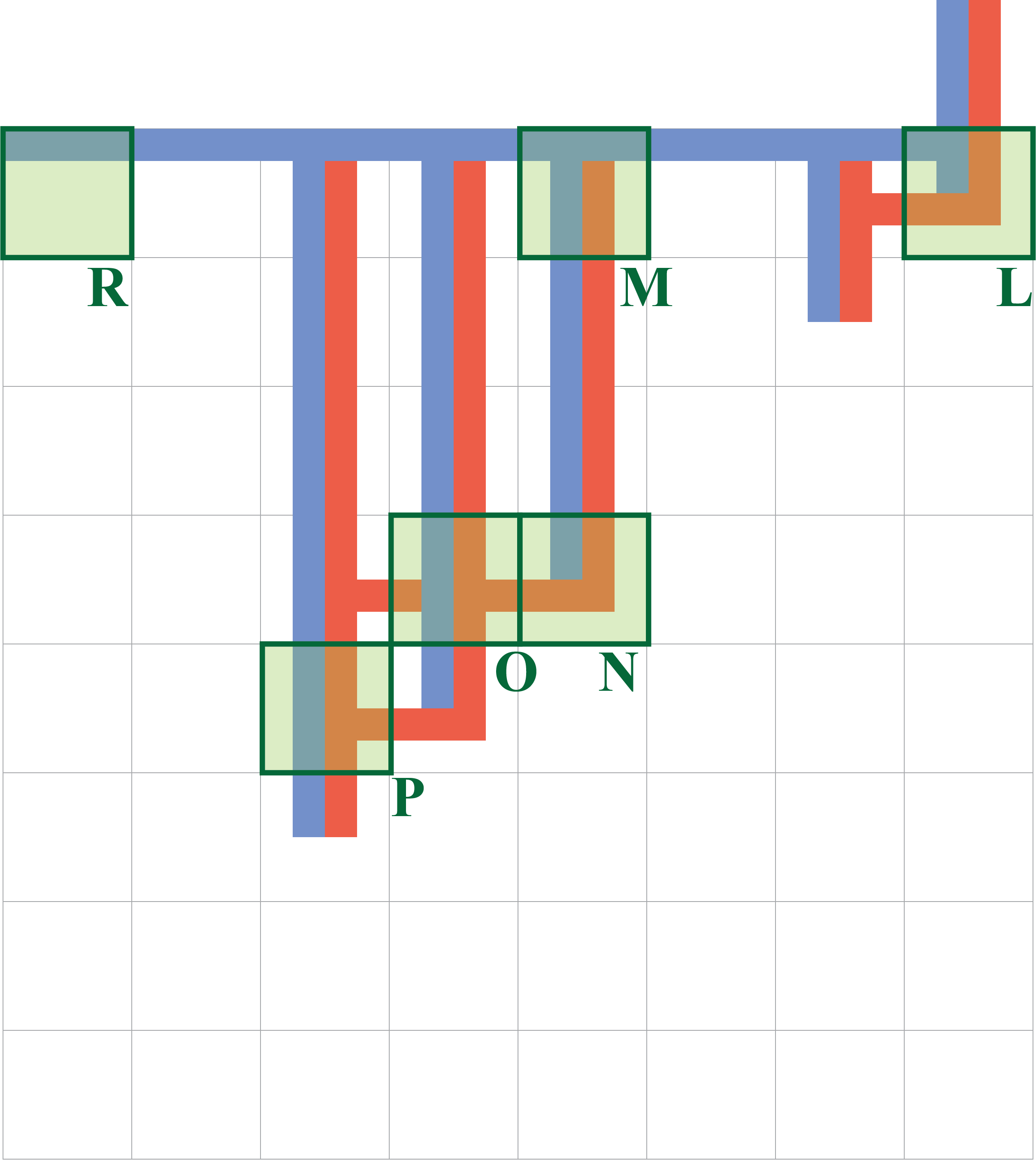}
      \caption{With the green squares we indicate the regions, with their labels, that we need to analyze in the proof of Lemma \ref{lem:noSolutionsInsidePLSbox} for the case of PLS Box (B).}
      \label{fig:PLSboxBAnalysis}
    \end{figure}

\paragraph{F.} We start with a figure of the region F in Figure \ref{fig:F}, where we indicate all the small boxes with colors and gradients that have not appeared in A - E.

\begin{figure}
    \centering
    \includegraphics[scale=0.2]{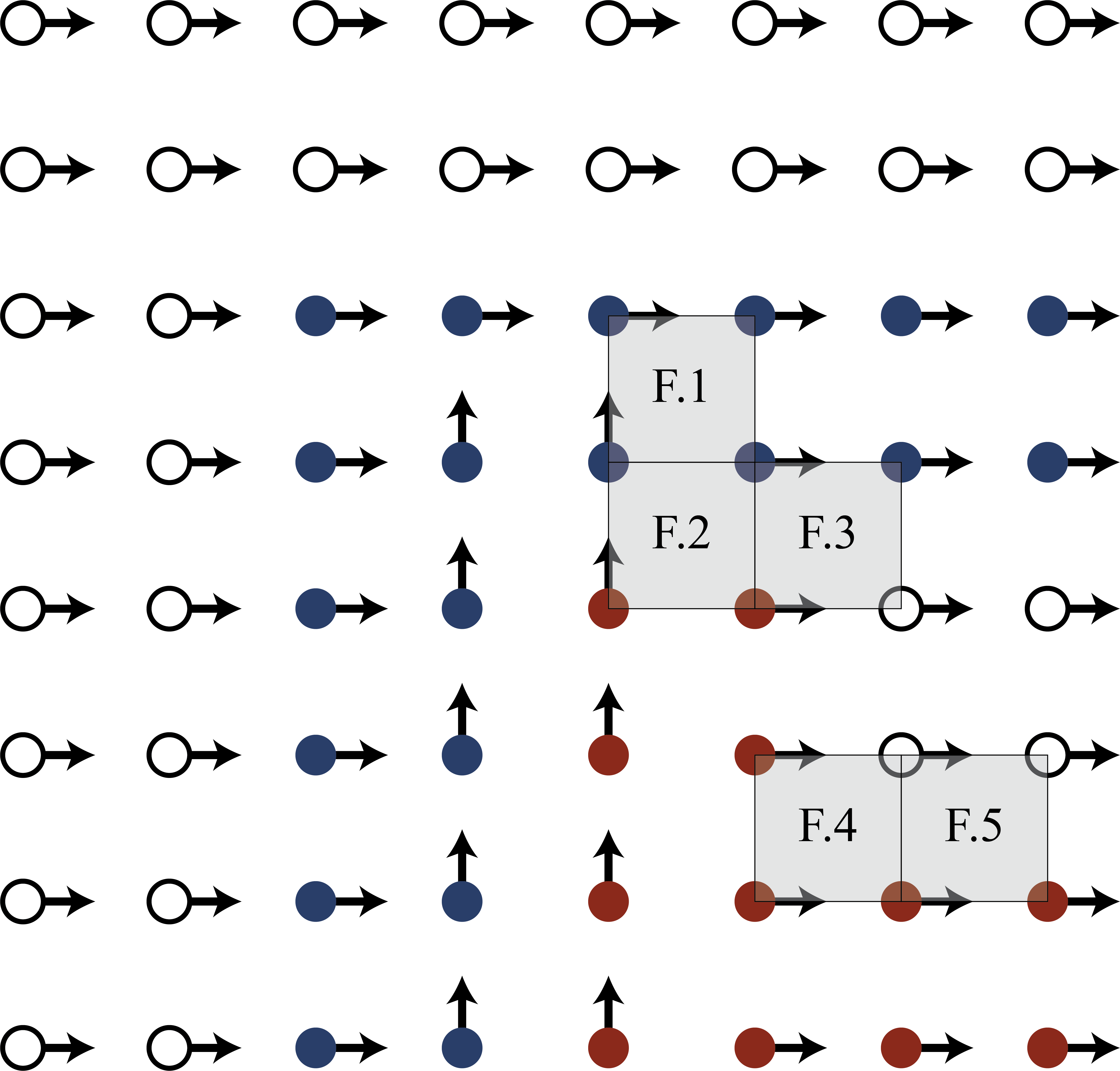}
    \caption{The small boxes of the region F shown in Figure \ref{fig:PLSboxAAnalysis}.}
    \label{fig:F}
\end{figure}

Most of the small boxes have appeared before except from the following:
\begin{itemize}
  \item[$\blacktriangleright$] The small box F.1 follows from Group 4 after applying a $y = x$ reflection.
  \item[$\blacktriangleright$] The small box F.2 follows from Group 2 after applying a $y = - x$ reflection and negation.
  \item[$\blacktriangleright$] The small boxes F.3, F.4, F.5 follow from plain application of Group 1 using that: (1) all the colors decrease linearly as the $x$ coordinate increases, (2) dark red is everywhere at least $1$ larger than the background.
\end{itemize}

So no solution appears in F.

\paragraph{G.} We start with a figure of the region G in Figure \ref{fig:G}, where we indicate all the small boxes with colors and gradients that have not appeared in A - F.

\begin{figure}
    \centering
    \includegraphics[scale=0.2]{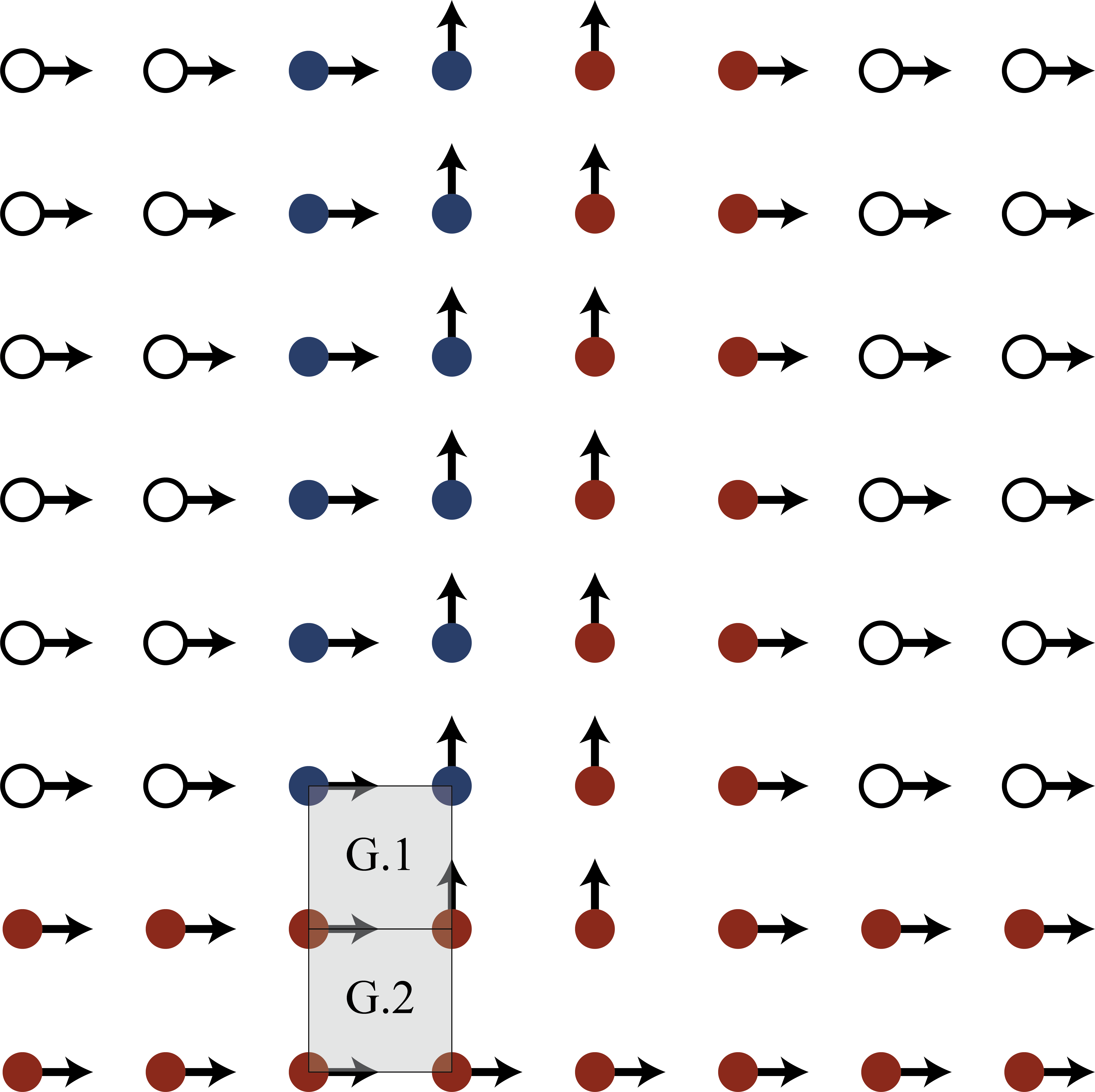}
    \caption{The small boxes of the region G shown in Figure \ref{fig:PLSboxAAnalysis}.}
    \label{fig:G}
\end{figure}

Most of the small boxes have appeared before except from the following:
\begin{itemize}
  \item[$\blacktriangleright$] The small box G.1 follows from Group 2 after applying a $y = x$ reflection.
  \item[$\blacktriangleright$] The small box G.2 follows from Group 4 after applying a $y = - x$ reflection and negation.
\end{itemize}

So no solution appears in G.

\paragraph{H.} In region H, as we can see in Figure \ref{fig:H}, all the small boxes have appeared before in regions A - G and hence we can directly conclude that there are no solutions in H.

\begin{figure}
    \centering
    \includegraphics[scale=0.2]{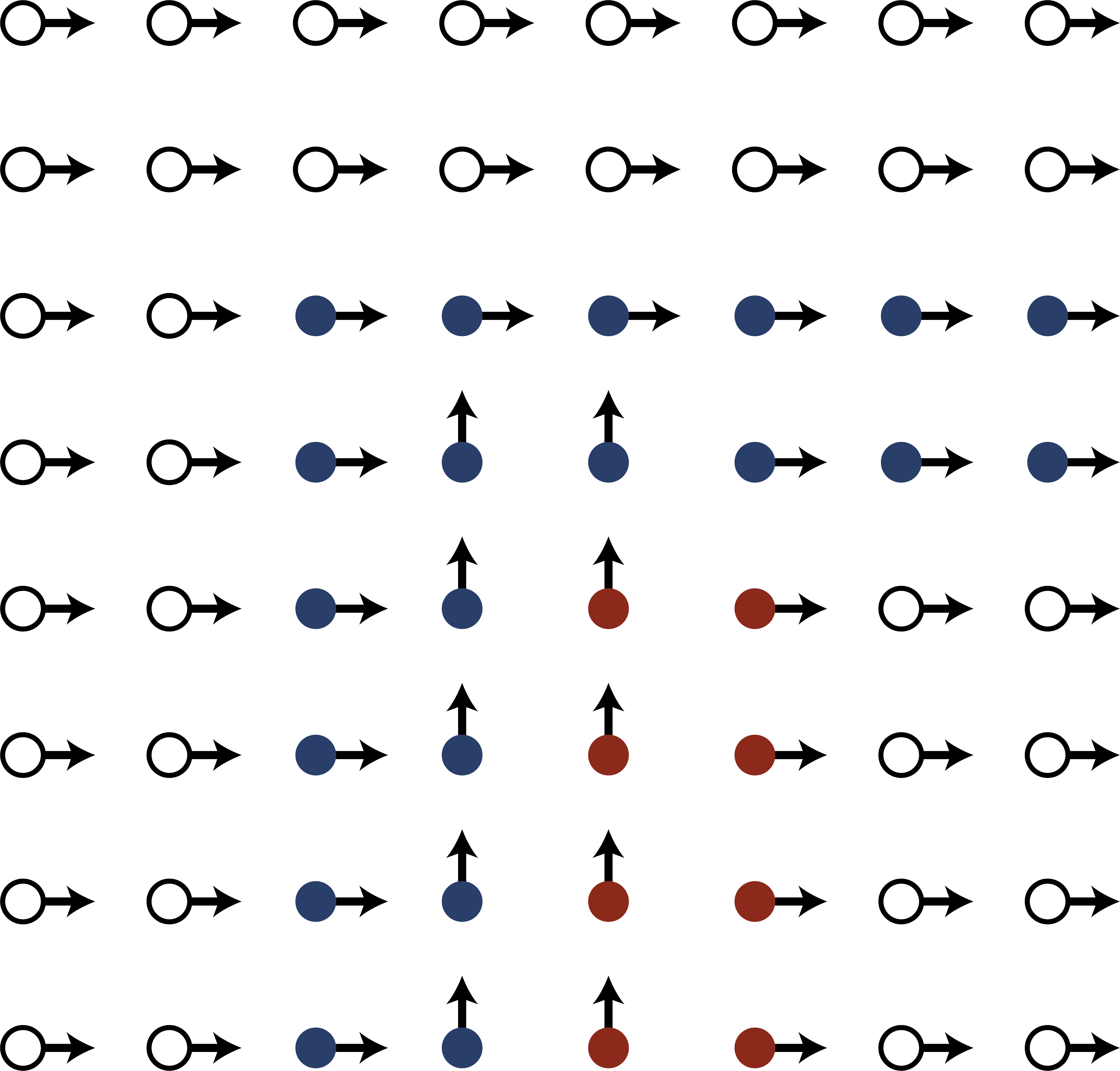}
    \caption{The small boxes of the region H shown in Figure \ref{fig:PLSboxAAnalysis}.}
    \label{fig:H}
\end{figure}

\paragraph{I.} In region I, as we can see in Figure \ref{fig:I}, all the small boxes have appeared before in regions A - G and hence we can directly conclude that there are no solutions in I.

\begin{figure}
    \centering
    \includegraphics[scale=0.2]{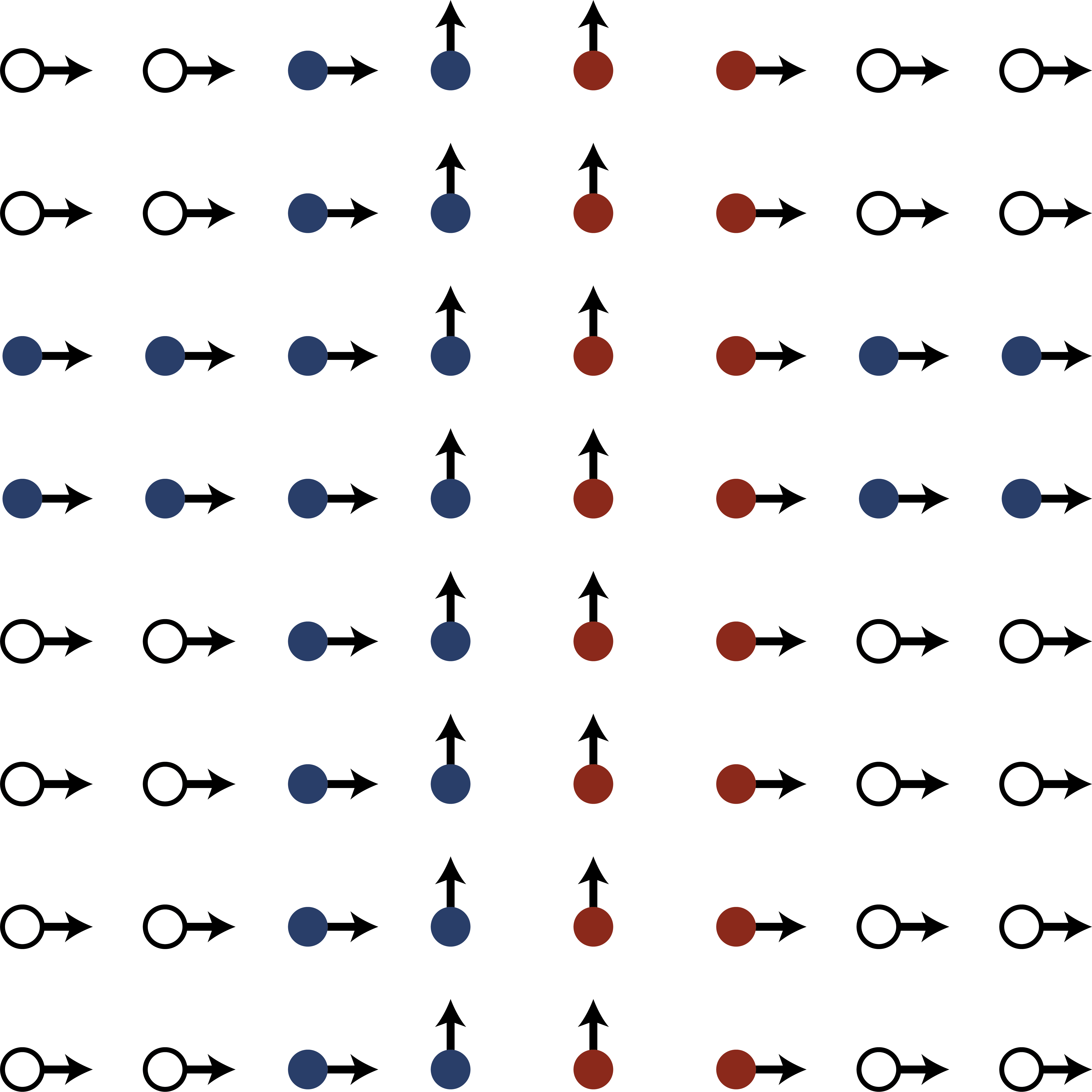}
    \caption{The small boxes of the region I shown in Figure \ref{fig:PLSboxAAnalysis}.}
    \label{fig:I}
\end{figure}

\paragraph{J.} In region J, as we can see in Figure \ref{fig:J}, all the small boxes have appeared before in regions A - G and hence we can directly conclude that there are no solutions in J.

\begin{figure}
    \centering
    \includegraphics[scale=0.2]{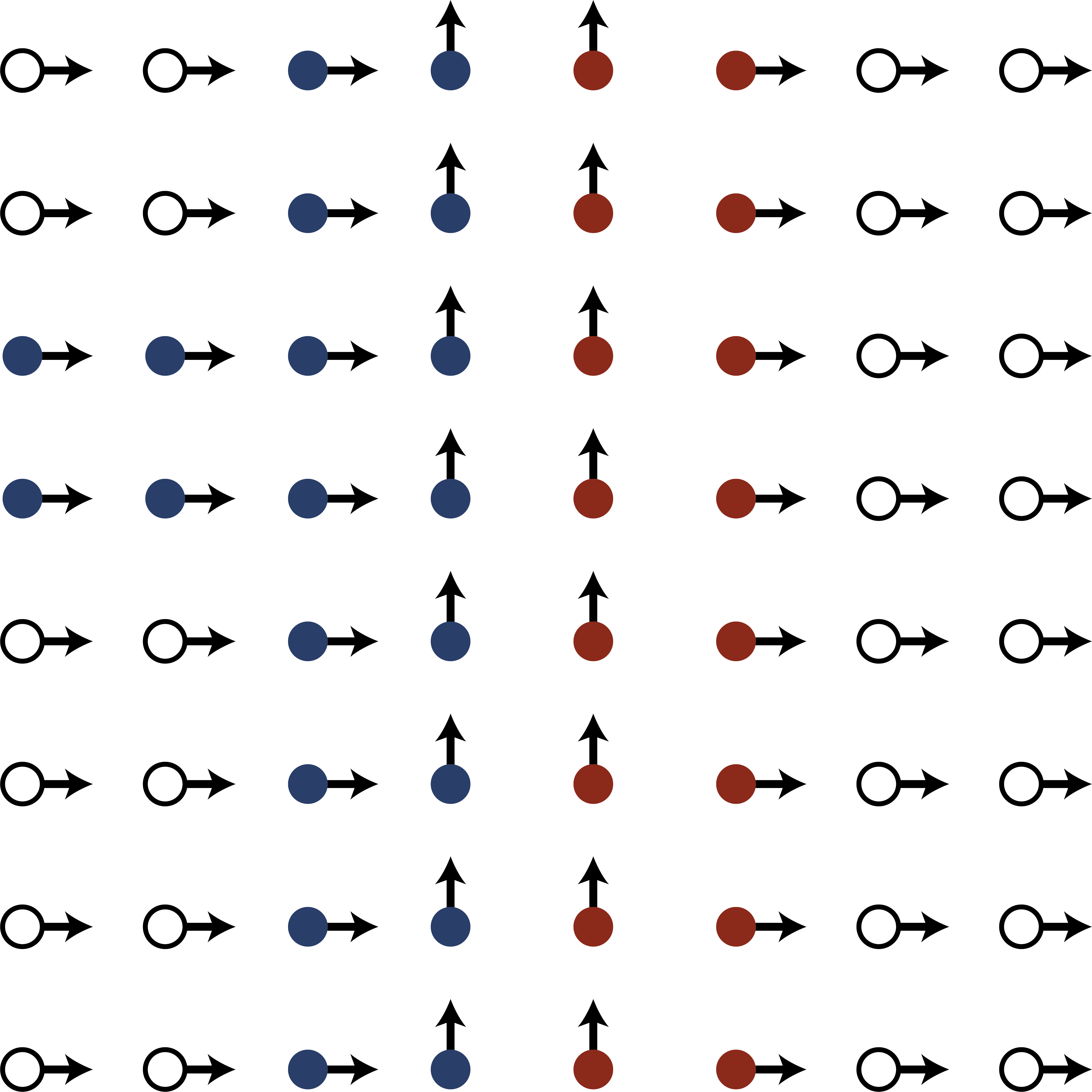}
    \caption{The small boxes of the region J shown in Figure \ref{fig:PLSboxAAnalysis}.}
    \label{fig:J}
\end{figure}

\paragraph{K.} In region K, as we can see in Figure \ref{fig:K}, all the small boxes have appeared before in regions A - G and hence we can directly conclude that there are no solutions in K.

\begin{figure}
    \centering
    \includegraphics[scale=0.2]{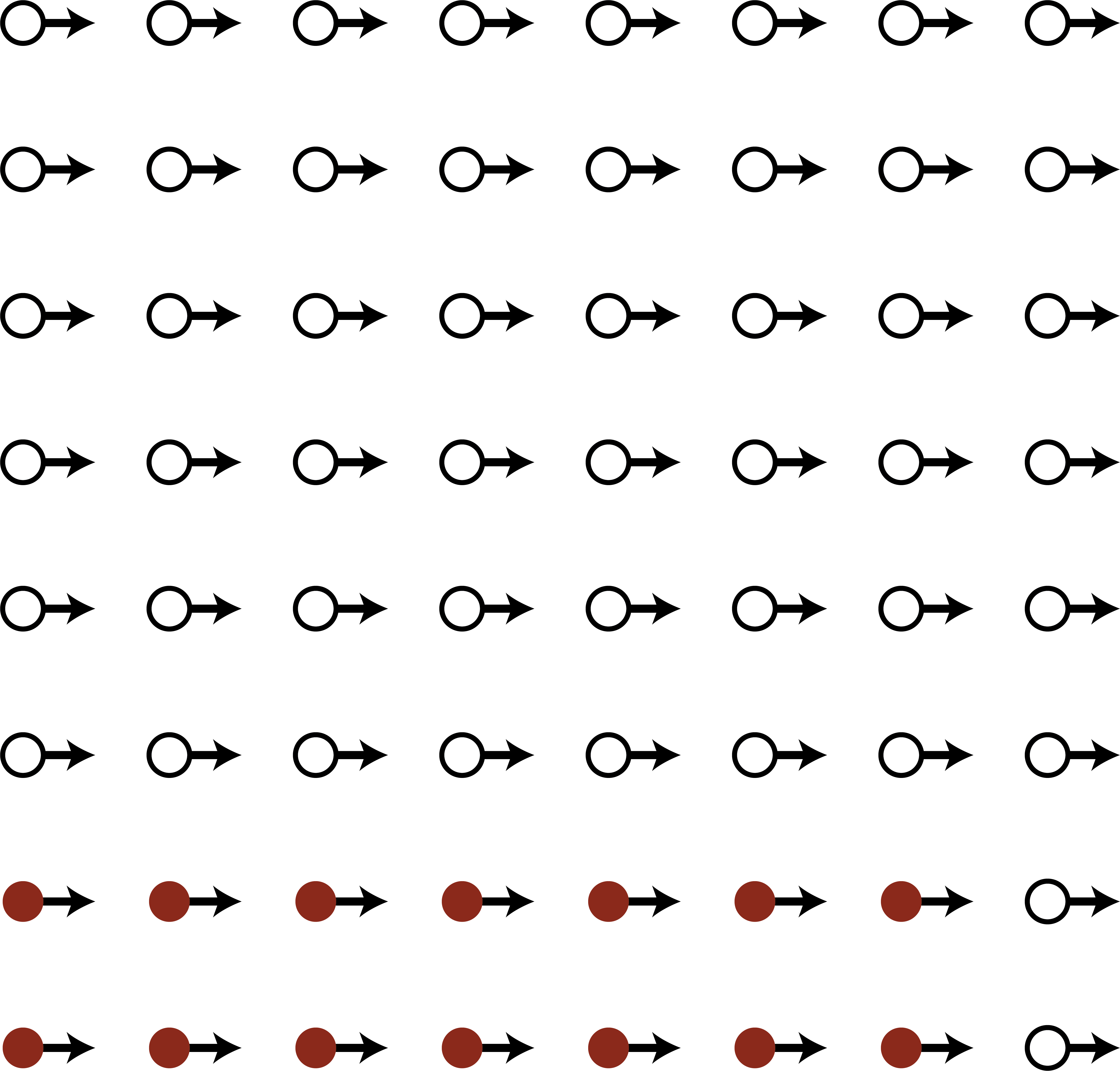}
    \caption{The small boxes of the region K shown in Figure \ref{fig:PLSboxAAnalysis}.}
    \label{fig:K}
\end{figure}

\paragraph{L.} We start with a figure of the region L in Figure \ref{fig:L}, where we indicate all the small boxes with colors and gradients that have not appeared in A - K.

\begin{figure}
    \centering
    \includegraphics[scale=0.2]{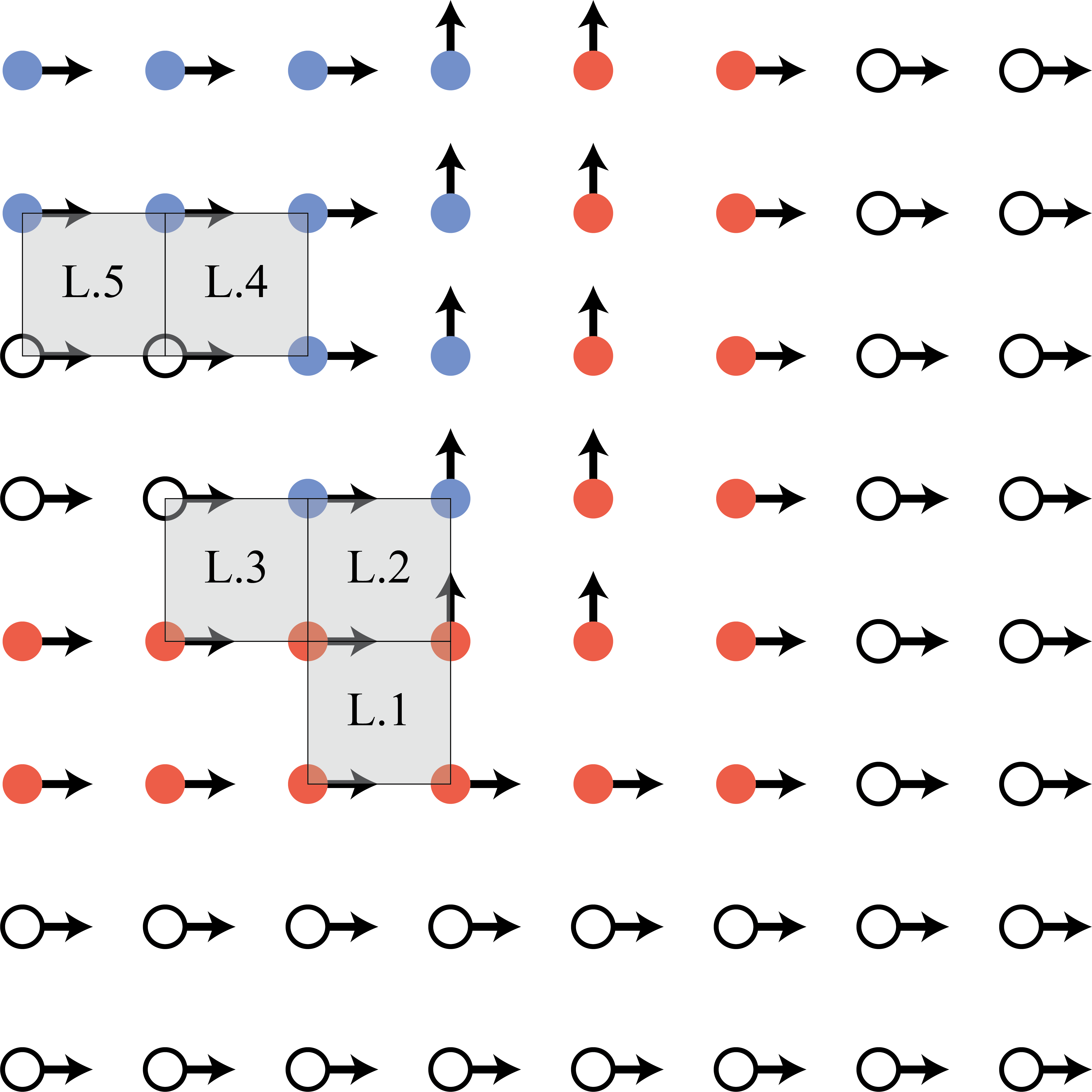}
    \caption{The small boxes of the region L shown in Figure \ref{fig:PLSboxBAnalysis}.}
    \label{fig:L}
\end{figure}

Most of the small boxes have appeared before except from the following:
\begin{itemize}
  \item[$\blacktriangleright$] The small box L.1 follows from Group 4 after applying a $y = - x$ reflection and negation.
    \item[$\blacktriangleright$] The small box L.2 follows from Group 2 after applying a $y = x$ reflection.
  \item[$\blacktriangleright$] The small boxes L.3, L.4, L.5 follow from plain application of Group 1 using that: (1) all the colors decrease linearly as the $x$ coordinate increases, and (2) light blue is everywhere at least $1$ smaller than the background.
\end{itemize}

So no solution appears in L.

\paragraph{M.} We start with a figure of the region M in Figure \ref{fig:M}, where we indicate all the small boxes with colors and gradients that have not appeared in A - L.

\begin{figure}
    \centering
    \includegraphics[scale=0.2]{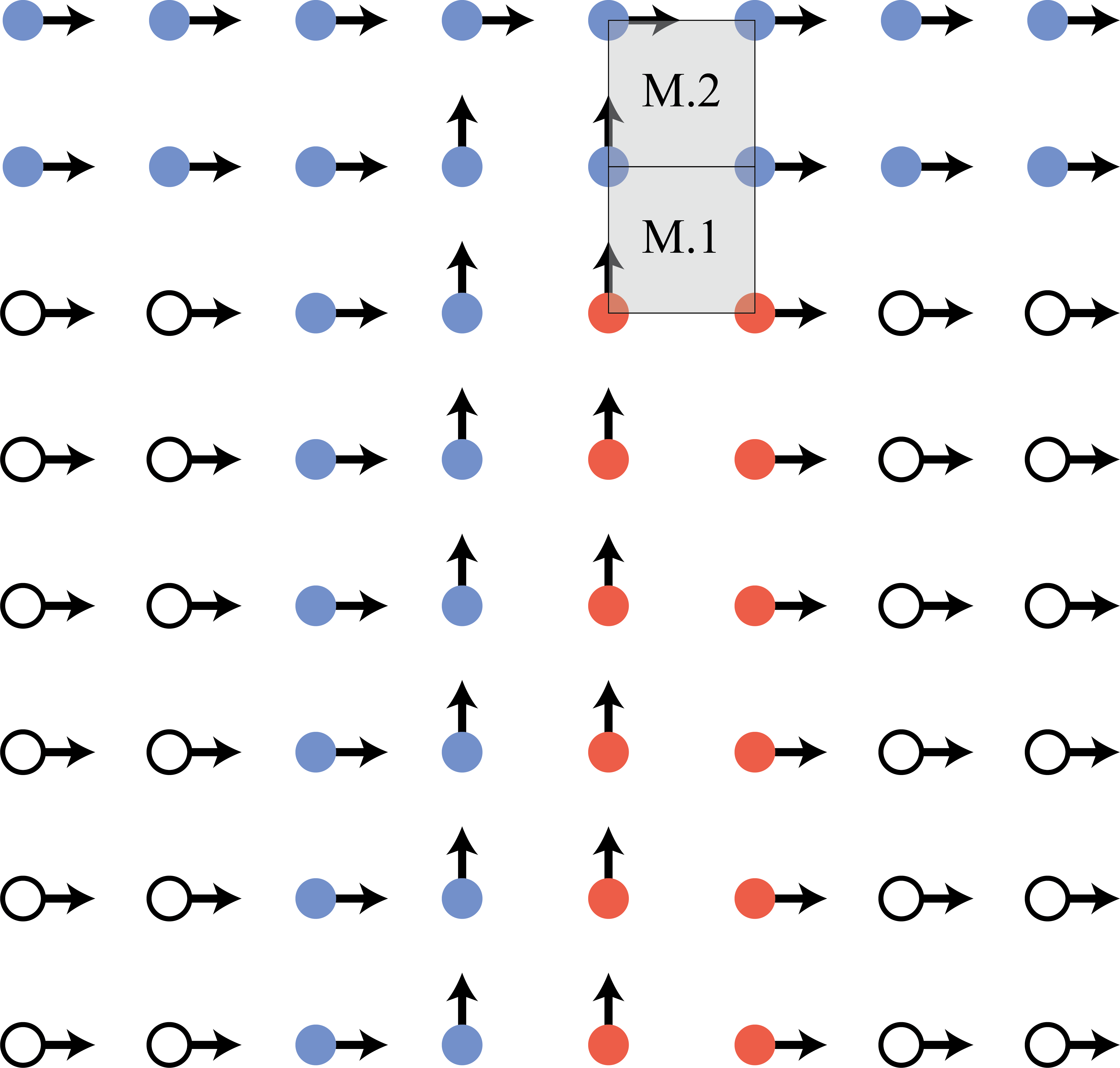}
    \caption{The small boxes of the region M shown in Figure \ref{fig:PLSboxBAnalysis}.}
    \label{fig:M}
\end{figure}

Most of the small boxes have appeared before except from the following:
\begin{itemize}
  \item[$\blacktriangleright$] The small box M.1 follows from Group 2 after applying a $y = - x$ reflection and negation.
  \item[$\blacktriangleright$] The small box M.2 follows from Group 4 after applying a $y = x$ reflection.
\end{itemize}

So no solution appears in M.

\paragraph{N.} In region N, as we can see in Figure \ref{fig:N}, all the small boxes have appeared before in regions A - M and hence we can directly conclude that there are no solutions in N.

\begin{figure}
    \centering
    \includegraphics[scale=0.2]{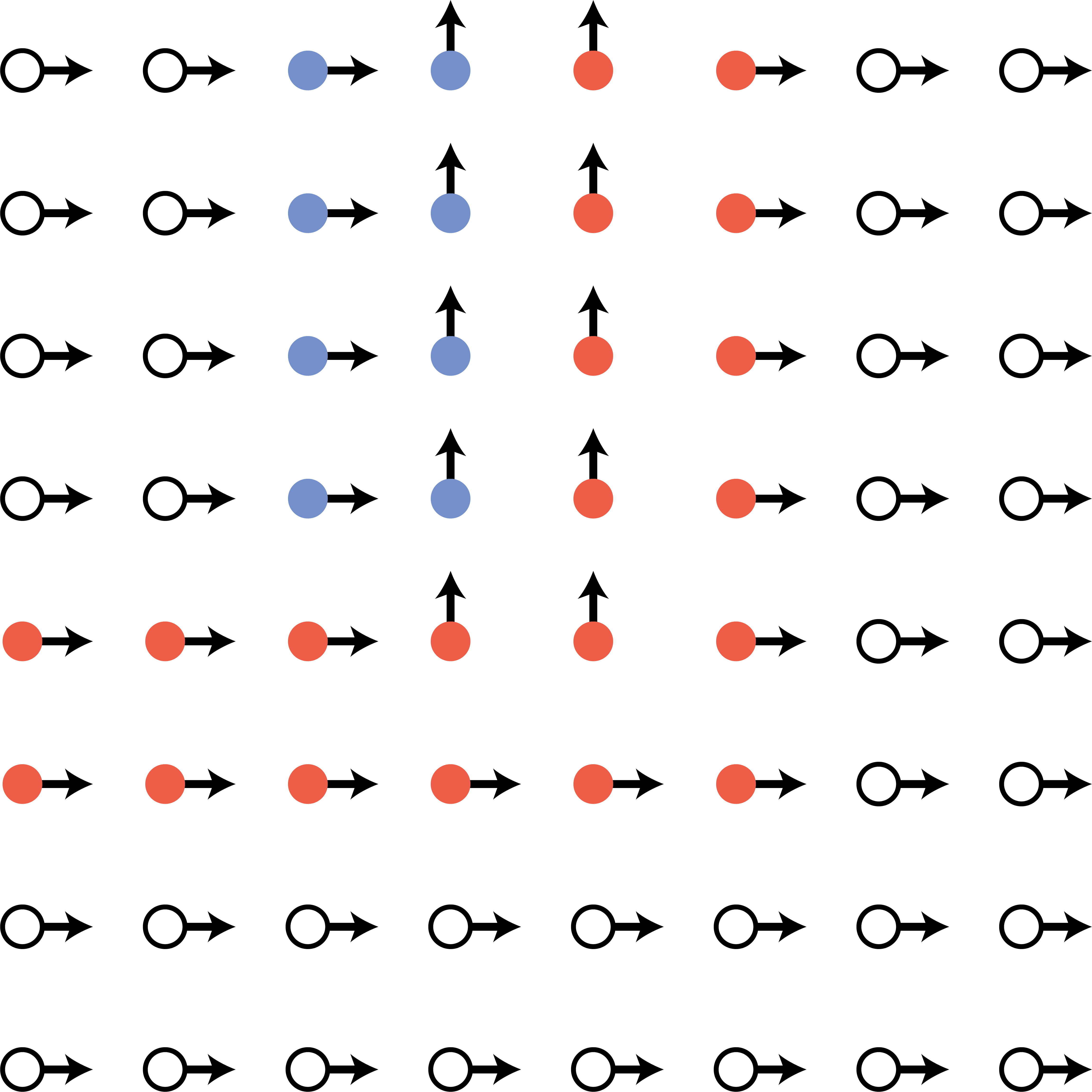}
    \caption{The small boxes of the region N shown in Figure \ref{fig:PLSboxBAnalysis}.}
    \label{fig:N}
\end{figure}

\paragraph{O.} In region O, as we can see in Figure \ref{fig:O}, all the small boxes have appeared before in regions A - M and hence we can directly conclude that there are no solutions in O.

\begin{figure}
    \centering
    \includegraphics[scale=0.2]{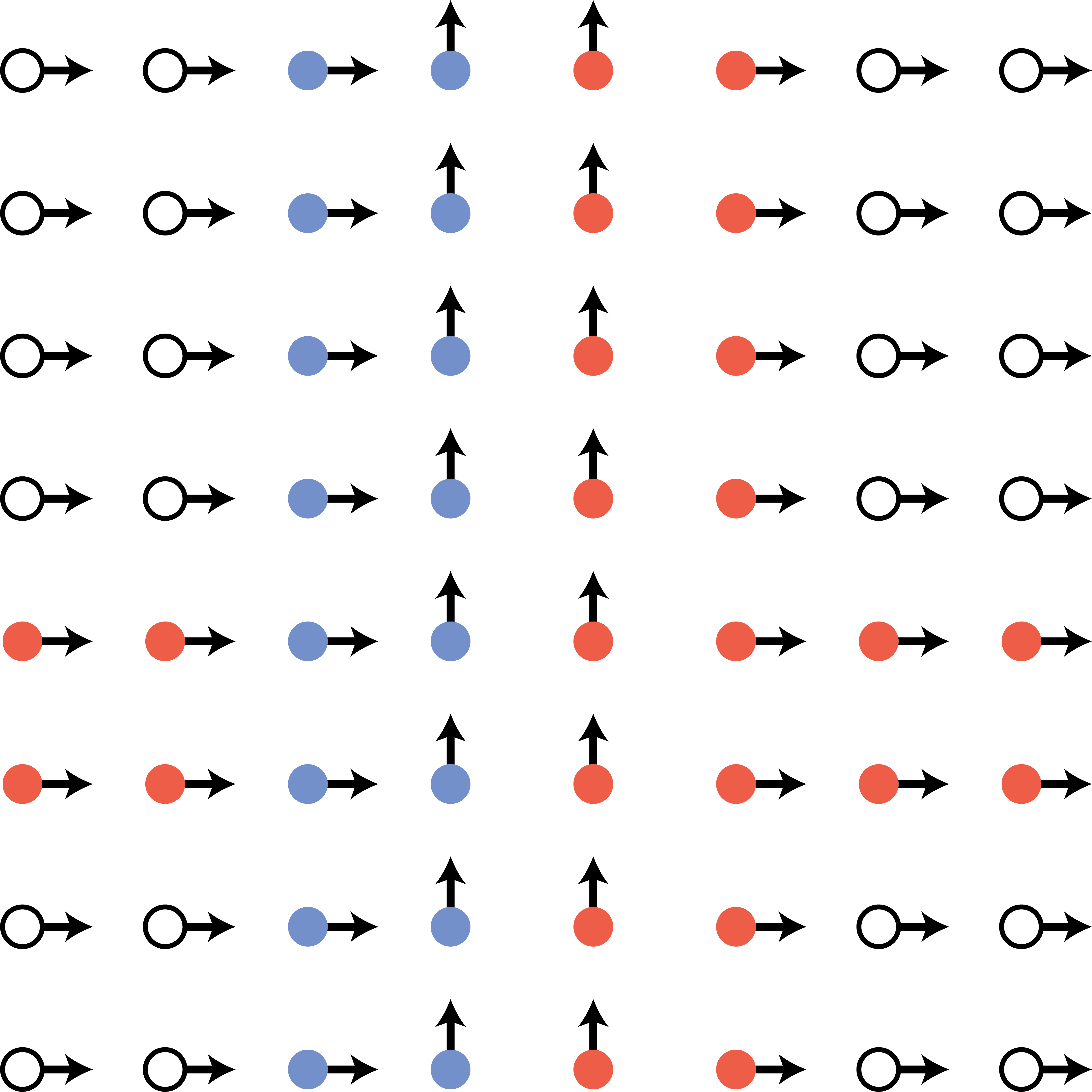}
    \caption{The small boxes of the region O shown in Figure \ref{fig:PLSboxBAnalysis}.}
    \label{fig:O}
\end{figure}

\paragraph{P.} In region P, as we can see in Figure \ref{fig:P}, all the small boxes have appeared before in regions A - M and hence we can directly conclude that there are no solutions in P.

\begin{figure}
    \centering
    \includegraphics[scale=0.2]{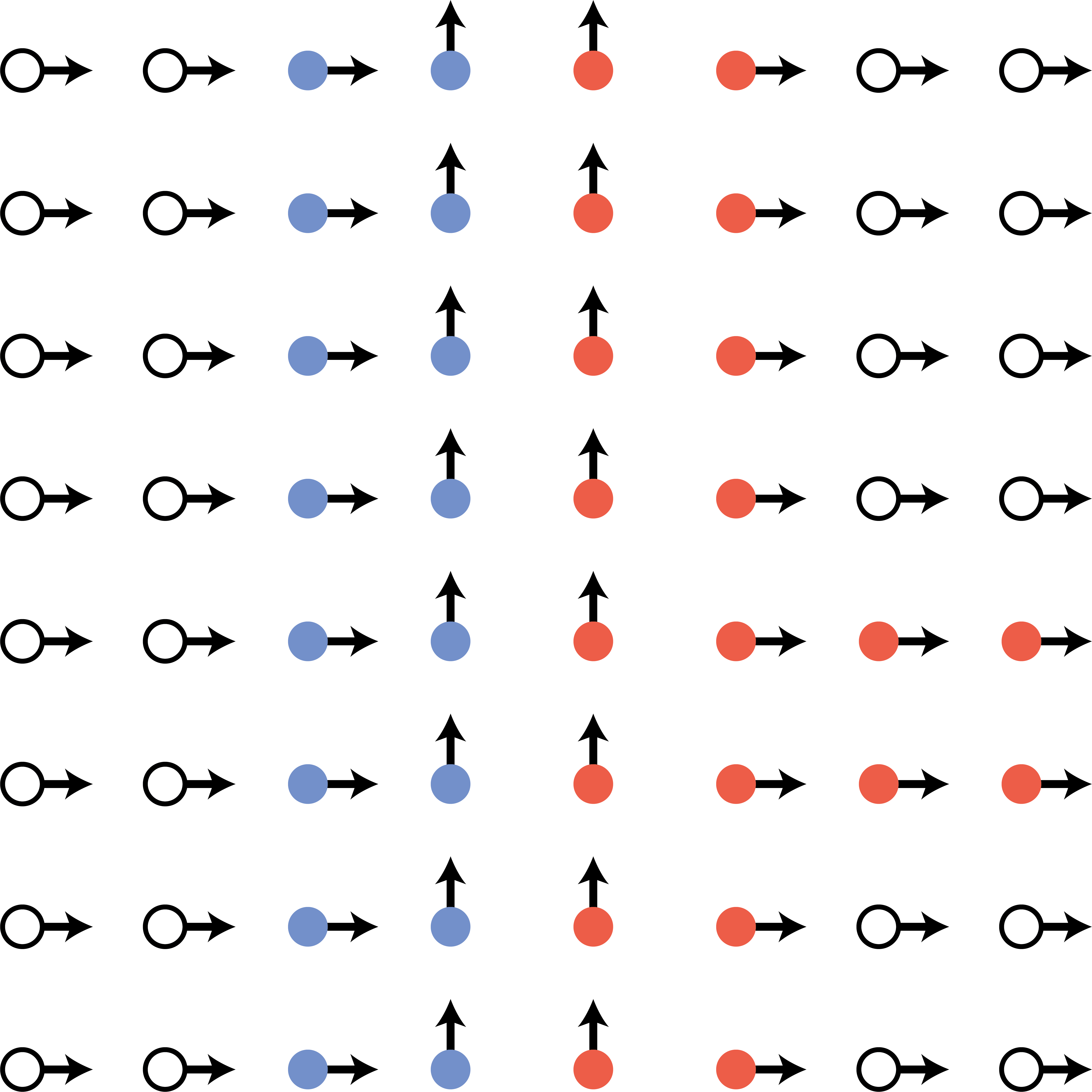}
    \caption{The small boxes of the region P shown in Figure \ref{fig:PLSboxBAnalysis}.}
    \label{fig:P}
\end{figure}

\paragraph{R.} In region R, as we can see in Figure \ref{fig:R}, all the small boxes have appeared before in regions A - M and hence we can directly conclude that there are no solutions in R.

\begin{figure}
    \centering
    \includegraphics[scale=0.2]{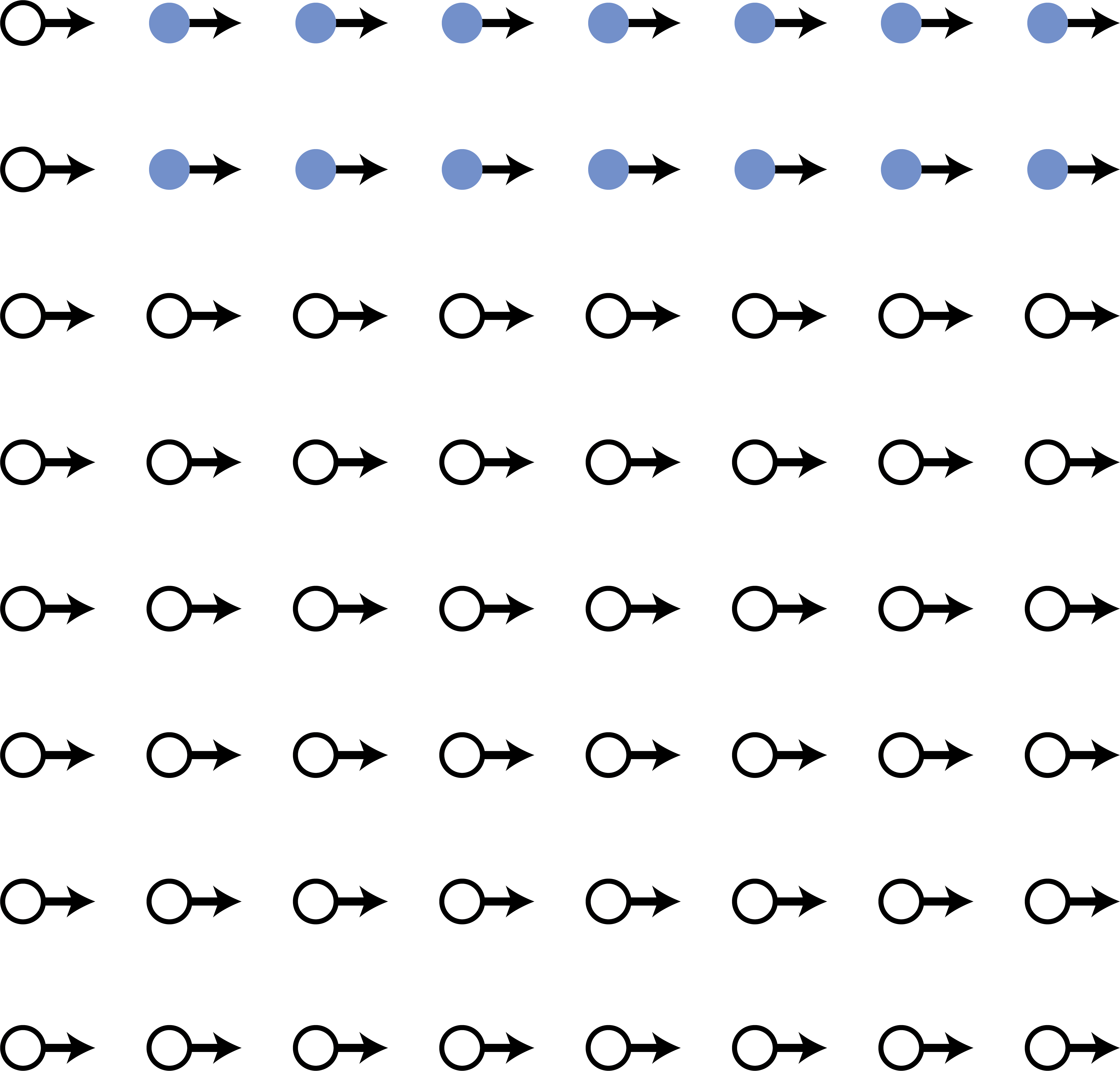}
    \caption{The small boxes of the region R shown in Figure \ref{fig:PLSboxBAnalysis}.}
    \label{fig:R}
\end{figure}

\noindent Since no solutions have appeared in any of F - R and there are no other types of small boxes that appear inside the PLS Box (A) and PLS Box (B), we conclude that Lemma \ref{lem:noSolutionsInsidePLSbox} follows.
\end{proof}

Now combining Lemma \ref{lem:noSolutionsLemma} and Lemma \ref{lem:noSolutionsInsidePLSbox} we conclude that using our construction any $0.01$-stationary point can only appear in places that correspond to solutions of the original \iter/ instance. To finish our proof of Lemma \ref{lem:hardness} it remains: (1) to understand the boundness, Lipschitzness, and smoothness of $g$, and (2) to show that the complexity of computing $g$ at every point $(x, y)$ is polynomial in the representations of $x$, $y$ and in the size of the binary circuit $C$ from the \iter/ instance. We proceed with these goals in the following sections.

\subsection{Function Values, Lipschitzness, and Smoothness of $g$} \label{sec:Lipschitzness}

Our goal in this section is to prove the following lemma that provides bounds on  the function values, the Lipschitzness, and the smoothness of $g$ and hence of $f$. The proof is essentially the same, almost verbatim, with the proof of Lemma 4.2 of \cite{FearnleyGHS22-gradient} with some small changes to be applied in our case.

\begin{lemma}\label{lem:functionProperties}
	The function $f: \R^2 \to \mathbb{R}$ defined in \eqref{eq:definitionPeriodic} has the following properties:
	\begin{enumerate}
		\item It is continuously differentiable on $\R^2$.
		\item $f$ and its gradient $\nabla f$ are Lipschitz-continuous on $\R^2$ with Lipschitz-constant $L = 2^{18} M$.
        \item It holds that $\abs{f(x, y)} \le 2^{14} M$ for all $x, y \in \R^2$.
	\end{enumerate}
\end{lemma}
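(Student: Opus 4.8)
The plan is to reduce all three claims to properties of $g$ on the single tile $[0,M]^2$, using that $f$ is $M$-periodic in each coordinate by \eqref{eq:definitionPeriodic}. For the boundedness and Lipschitz claims, periodicity means it suffices to bound the relevant quantities cell by cell over the finitely many unit cells $[a,a+1]\times[b,b+1]$, $(a,b)\in G_M$, since the restriction of $f$ to any axis-aligned unit cell of $\R^2$ coincides with $g$ on some unit cell of $[0,M]^2$ up to a translation. For continuous differentiability I would prove a gluing statement: first that $g\in C^1([0,M]^2)$, and then that the periodic extension stays $C^1$ across the tile seams. The whole argument closely follows the proof of Lemma~4.2 of \cite{FearnleyGHS22-gradient}, with the constants recomputed for the affine pieces used here, which differ mainly by larger additive offsets (multiples of $M$).

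For $C^1$-ness: on each unit cell $g$ is the bicubic polynomial \eqref{eq:bicubic}, hence $C^\infty$ there. The key structural fact about the interpolation \eqref{eq:bicubic-coefficients} (with vanishing cross-derivatives) is that the restriction of the interpolant to a cell edge, together with its derivative in the normal direction, depends only on the values of $g$ and $\nabla g$ at the two endpoints of that edge: along a vertical edge the interpolant and its $x$-derivative are the Hermite cubics determined by $(g,g_y)$ respectively $(g_x,g_{xy})=(g_x,0)$ at the two corners of that edge, and symmetrically for horizontal edges. Consequently two unit cells sharing an edge agree on $g$ and $\nabla g$ along that edge, and the (at most four) cells meeting at a grid point share the prescribed value and gradient there. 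By the pasting lemma $g$ and $\nabla g$ are continuous on $[0,M]^2$, i.e. $g\in C^1([0,M]^2)$. The same edge-matching argument applies across the periodic seams: the cell immediately left of $\{x=M\}$ and the cell immediately right of it (a translate of a cell abutting $\{x=0\}$) agree on $g$ and $\nabla g$ along the seam precisely because the corner data agree by the boundary conditions \eqref{eq:boudaryConditions} and \eqref{eq:boudaryConditions:Gradient}, established in Lemma~\ref{lem:boundaryConditions}; likewise for the horizontal seam. A final application of the pasting lemma over all translated cells gives $f\in C^1(\R^2)$.

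For the quantitative bounds I would track constants through \eqref{eq:bicubic-coefficients}. At every grid point $(a,b)\in G_M$ the value $g(a,b)$ equals one of $h_{DB},h_{DR},h_{LB},h_{LR},h_B,h_{TB},h_{TR}$ evaluated on $[0,M]^2$, and each of these functions has magnitude at most $8M$ there (the worst case being $h_{DB}$); also $\|\nabla g(a,b)\|_2 \le \delta = 1/2$ by construction. The two integer matrices in \eqref{eq:bicubic-coefficients} have all rows and columns of $\ell_1$-norm at most $9$, so the coefficients satisfy $|a_{ij}|\le 81\cdot\max(8M,1/2)\le 648\,M$ using $M\ge 1$. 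Summing the $16$ monomials on a unit cell gives $|g(x,y)|\le\sum_{i,j}|a_{ij}|\le 2^{14}M$, which is claim $3$ and, by periodicity, $|f|\le 2^{14}M$ on $\R^2$. Differentiating \eqref{eq:bicubic} termwise, on each unit cell $\|\nabla g\|_2\le 3\sqrt2\sum_{i,j}|a_{ij}|$ and the operator norm of $\nabla^2 g$ is at most $16\sum_{i,j}|a_{ij}|$, both therefore at most $2^{18}M$. Finally, since $f$ is $C^1$ on $\R^2$ and piecewise polynomial with these uniform per-cell bounds, a segment argument — connect two points by a line segment, split it at the finitely many cell boundaries it crosses, apply the mean value theorem on each piece, and use continuity of $f$ and $\nabla f$ at the breakpoints — shows that $f$ is $2^{18}M$-Lipschitz and $\nabla f$ is $2^{18}M$-Lipschitz on $\R^2$.

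The main obstacle is bookkeeping rather than conceptual: one must verify the edge-matching property of bicubic interpolation carefully and then carry explicit constants through the interpolation formula for the specific affine pieces, which is exactly where this differs from \cite{FearnleyGHS22-gradient}. The slack between the crude bounds above ($|a_{ij}|\le 648M$, $\sum_{i,j}|a_{ij}|\le 2^{14}M$) and the stated $L=2^{18}M$ and $B=2^{14}M$ leaves ample room, so recomputing the constants as in that reference suffices.
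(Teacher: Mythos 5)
Your proposal is correct and takes essentially the same approach as the paper: bound $|g|$ and $\|\nabla g\|$ at grid points, propagate these bounds through the interpolation matrix \eqref{eq:bicubic-coefficients} to control the coefficients $|a_{ij}|$, deduce per-cell bounds on $g$, $\nabla g$, $\nabla^2 g$ over $[0,1]^2$, and glue across cell boundaries and periodic seams using continuity and Lemma~\ref{lem:boundaryConditions}. The one place you differ is in proving $C^1$-smoothness of the bicubic interpolant: you re-derive the edge-matching property from \eqref{eq:bicubic-coefficients} and invoke the pasting lemma, whereas the paper simply cites \cite{russell1995polynomial}; both are valid, and your version is more self-contained while the paper's is terser.
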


\begin{proof}
  The first point is a simple consequence of \cite{russell1995polynomial}.

  Next we prove the properties 2.~and 3.
	
  \paragraph{\bf Lipschitz-continuity.} In order to prove the second point, we first show that $g$ and $\nabla g$ are $L$-Lipschitz-continuous in every small box of the grid $G_M$. Consider any small box. In our construction, the values of $g_x,g_y$ at the corners of the box are upper bounded by $\delta = 1/2$. The value of $g$ at the corners is upper bounded by the maximum value of the light red function $h_{LR}$ which is everywhere less than $6 \cdot M$. Furthermore, the value of $g$ at the corners is lower bounded by the minimum value of the dark blue function $h_{DB}$ which is everywhere at least $- 8 \cdot M$. So we conclude that $\abs{g(x, y)} \le 8 M$ for all $(x, y) \in G_M$. Thus, using \cref{eq:bicubic-coefficients}, it is easy to check that $|a_{ij}| \leq 2^{10} M$ for all $i,j \in \{0,1,2,3\}$. Furthermore, the partial derivatives of $g$ inside the small box can be written as:
  \begin{equation}\label{eq:bicubic-gradient}
	\frac{\partial g}{\partial x}(x,y) = \sum_{i=1}^3 \sum_{j=0}^3 i \cdot a_{ij} x^{i-1} y^j
	\qquad
	\frac{\partial g}{\partial y}(x,y) = \sum_{i=0}^3 \sum_{j=1}^3 j \cdot a_{ij} x^i y^{j-1}
  \end{equation}
  where in the above expressions $(x,y) \in [0,1]^2$ and correspond to the local coordinates inside the small box. Finally, it is easy to see that the monomials $x^i y^j$, $i,j \in \{0,1,2,3\}$, are $6$-Lipschitz continuous over $[0,1]^2$. Now, using \cref{eq:bicubic} and \cref{eq:bicubic-gradient}, we obtain that $g$ and $\nabla g$ are Lipschitz-continuous (w.r.t.\ the $\ell_2$-norm) with Lipschitz constant $L = 2^{18} M$ inside the small box.

  Now, since $g$ and $\nabla g$ are $L$-Lipschitz-continuous inside every small box and continuous over all of $[0, M]^2$, a simple argument shows that they are also $L$-Lipschitz-continuous over all of $[0, M]^2$ (e.g., see the proof of Lemma~4.2 in \citep{FearnleyGHS22-gradient}). Since $f$ is itself a tiling of $g$ we can use the same argument together with Lemma \ref{lem:boundaryConditions} to conclude that $f$ is also $L$-Lipschitz and $L$-smooth.

  \paragraph{Bounded Value.} As shown above we have $\abs{g(x, y)} \le 8 M$ for all $(x, y) \in G_M$, and $|a_{ij}| \leq 2^{10} M$ for all $i,j \in \{0,1,2,3\}$. It follows from \cref{eq:bicubic} that $\abs{g(x, y)} \le 2^{14} M$ for all $(x,y) \in [0,M]^2$. Finally, since $f$ is everywhere equal to the output of $g$ at some point we get that $\abs{f(x, y)} \le 2^{14} M$ over the whole domain.
\end{proof}

\subsection{Turing Machine that Evaluates $f$} \label{sec:computationF}

The last part of the proof of Lemma \ref{lem:hardness} is to show that there exist a Turing machine $\calC_f$ such that given two numbers $x, y \in [0, 1]$ with bit complexity $b \ge \len(x)$ and $b \ge \len(y)$ we can compute the value and the gradient of $f$ at any point in time that is polynomial in $b$ and in the size of the boolean circuit $C$ that we are given as an input that describes the \iter/ instance that we are reducing from. It is easy to see that the computation $z \mapsto z - M \cdot \floor{\frac{z}{M}}$ can be done in time polynomial in $\len(z)$ and $\len(M)$. $\len(z)$ will be bounded by $b$ and $M$ is a natural number such that $M = O(2^n)$, so $\len(M) = O(n)$. Since $C$ is a circuit with $n$ inputs and $n$ outputs, $O(n)$ is certainly polynomial in the size of $C$.

Since $z \mapsto z - M \cdot \floor{\frac{z}{M}}$ can be done in polynomial time, it suffices to show that we can compute $g(x, y)$ in polynomial time. To do that we first need to identify the type of the small box where $(x, y)$ belongs. To do this outside the PLS boxes, we need time linear in $b$. Inside the PLS boxes, on the other hand, we need to evaluate the circuit $C$ with input $u$ that corresponds to the column of the medium box that $(x, y)$ belongs to, and with input $v$ that corresponds to the row of the medium box that $(x, y)$ belongs to. So we need to evaluate $C(u), C(v)$ as well as $C(C(u)), C(C(v))$ and if we specify these then we can identify the type of the small box that $(x, y)$ belongs to. So we need to evaluate $C$ four times, which takes linear time in the size of $C$. Finally, once we identify the small box, we need to compute the bi-cubic interpolation which involves solving a linear system and computing a third degree polynomial with numbers that use at most $\max\set{b, \len(M)}$ bits. It is well known that both of these can be done in time polynomial in the description of the number and hence we conclude that there exists an efficient Turing machine that computes $g$ which implies an efficient Turing machine that computes $f$.

\subsection{Proof of Lemma \ref{lem:hardness}} \label{app:lem:hardness}

To show Lemma \ref{lem:hardness} we combine Lemma \ref{lem:noSolutionsLemma}, Lemma \ref{lem:noSolutionsInsidePLSbox}, Lemma \ref{lem:boundaryConditions}, Lemma \ref{lem:functionProperties} and the discussion of Section \ref{sec:computationF} and Lemma \ref{lem:hardness} follows.

\section{Query Lower Bound for 2D -- Proof of Theorem \ref{thm:query2D}} \label{app:query2D}

The first observation to show this theorem is that the black box version of \iter/ has a query lower bound of $2^n$. Consider any algorithm $\calA$ that solves \iter/ and has only query access to the input circuit $C$. At every step $t$ of the algorithm $\calA$, there exists a sequence $S_t = (u_0, u_1, \dots, u_{i_t})$ such that $u_0 = 1$, $u_j = C(u_{j - 1})$, for all $j \in [i_t]$ and the algorithm $\calA$ has queried all the nodes $u \in S_t$. In words, $S_t$ corresponds to the longest path that $\calA$ has discovered start from $1$. The adversary against $\calA$ is very simple
\begin{itemize}
  \item[$\triangleright$] if the algorithm queries to learn $C(v)$ for some $v \neq u_{i+t}$ then the adversary replies $C(v) = v$,
  \item[$\triangleright$] if the algorithm queries to learn $C(u_t)$ then the adversary replies with the smallest node that has never been queried before.
\end{itemize}
Hence, the only way for $\calA$ find a solution at any node $v$ is to have queried everything before $v$, otherwise the adversary will just continue the path $S$ without giving a solution. The same way, the only way that $\calA$ can find a solution at any node $v$ is that $\calA$ has queried everything after $v$ as well because if there are still available nodes after $v$ the adversary will continue the path $S$. This implies that any algorithm $\calA$ has to query every node in the worst case before it finds a solution. So in the worst case any algorithm $\calA$ will take time $2^n$.

Now as we discussed in the proof of Lemma \ref{lem:hardness} the function $f$ that we construct in Section \ref{app:hardness} has the properties: (1) to evaluate $f$ at any given point we need at most $4$ queries to $C$, (2) any stationary point of $f$ reveals a solution to \iter/ with input $C$. These two combined give us that any algorithm that finds a $0.01$-stationary point in functions that we construct will in the worst-case take time at least $2^{n - 2}$. Now observe that the parameter $M$ that we use to construct $f$ in Section \ref{app:hardness} satisfies $M = \Theta(2^n)$ which means that any algorithm that finds a $0.01$-stationary point in functions that we construct will in the worst-case take time at least $\Omega(M)$. The last thing to fix is the parameters of the algorithm. Currently, the function $f$ that we construct has boundedness $B = O(M)$, and Lipschitzness/smoothness $L = O(M)$. If we multiply the function value by $1/M$, i.e., we define $\tilde{f}(x) = \frac{1}{M} \cdot f(x)$, then to find a $0.01$-stationary point for $f$ we need to find a $\frac{1}{100 M}$-stationary point of $\tilde{f}$ so our new target $\eps$ is $\frac{1}{100 M}$. At the same time $\tilde{f}$ has now boundedness $B = O(1)$, and Lipschitzness/smoothness $L = O(1)$ and of course any algorithm would still need $\Omega(M)$ time to find a $\frac{1}{100 M}$-stationary point. Hence, any algorithm that finds an $\eps$-stationary point for $\tilde{f}$ will need in the worst case $\Omega(1/\eps)$ queries and Theorem \ref{thm:query2D} follows.

\bibliographystyle{plainnat}
\bibliography{references}

\end{document}